\definecolor{indigo}{rgb}{0.4,0,0.9}
\DeclareFontFamily{U}{rsfs}{\skewchar\font127 }
\DeclareFontShape{U}{rsfs}{m}{n}{%
	<-6.5> rsfs5
	<6.5-8> rsfs7
	<8-> rsfs10
}{}
\def\N {{\mathbb{N}}}
\def\R {{\mathbb{R}}}
\def\Z {{\mathbb{Z}}}
\def\DD {{\mathcal{D}}}
\def\RR {{\mathcal{R}}}
\def\M  {\mathrm {M}}
\def\d {\mathrm{d}}
\def\1 {{\mbox{\boldmath 1}}}
\DeclareMathOperator{\supp}{supp}
\DeclareMathOperator{\sign}{sign}
\def\ind{\cic{1}}
\newcommand{\cic}{\bm}
\newcommand{\abs}[1]{|#1|}
\newcommand{\Abs}[1]{\left|#1\right|}
\newcommand{\Norm}[2]{\left\|#1\right\|_{#2}}
\def\XXint#1#2#3{{\setbox0=\hbox{$#1{#2#3}{\int}$}
		\vcenter{\hbox{$#2#3$}}\kern-.5\wd0}}
\def \no#1#2#3 {{\bf #1} (#3), #2.}
\def \eds#1#2#3 {#1, #2, #3.}
\newcounter{counter}
\numberwithin{equation}{section}
\numberwithin{counter2}{section}
\newtheorem{proposition}[subsection]{Proposition}
\newtheorem{theorem}[counter]{Theorem}
\newtheorem{corollary}{Corollary}
\newtheorem{lemma}[subsection]{Lemma}
\theoremstyle{definition}
\newtheorem{definition}[subsection]{Definition}
\newtheorem*{remark*}{Remark}
\newtheorem*{warn*}{A word of warning}
\newtheorem{remark}[subsection]{Remark} 
\theoremstyle{plain}
\numberwithin{corollary}{counter}
\DeclareRobustCommand\widecheck[1]{{\mathpalette\@widecheck{#1}}}
\def\@widecheck#1#2{%
    \setbox\z@\hbox{\m@th$#1#2$}%
    \setbox\tw@\hbox{\m@th$#1%
       \widehat{%
          \vrule\@width\z@\@height\ht\z@
          \vrule\@height\z@\@width\wd\z@}$}%
    \dp\tw@-\ht\z@
    \@tempdima\ht\z@ \advance\@tempdima2\ht\tw@ \divide\@tempdima\thr@@
    \setbox\tw@\hbox{%
       \raise\@tempdima\hbox{\scalebox{1}[-1]{\lower\@tempdima\box
\tw@}}}%
    {\ooalign{\box\tw@ \cr \box\z@}}}
\author[F.\ Di Plinio]{Francesco Di Plinio} 
\address[F.\ Di Plinio]{Dipartimento di Matematica e Applicazioni, Universit\`a di Napoli \\ \newline \indent Via Cintia, Monte S.\ Angelo 80126 Napoli, Italy}
\email{\href{mailto:francesco.diplinio@unina.it}{\textnormal{francesco.diplinio@unina.it}}}
\author[M.\ Fl\'orez-Amatriain]{Mikel Fl\'orez-Amatriain}
\address[M.\ Fl\'orez-Amatriain]{BCAM - Basque Center for Applied Mathematics,
48009 Bilbao, Spain}
\email{\href{mailto:mflorez@bcamath.org}{\textnormal{mflorez@bcamath.org}}}
\author[I. Parissis]{Ioannis Parissis}
\address[I.\ Parissis]{Departamento de Matem\'aticas, Universidad del Pa\'is Vasco, Aptdo. 644, 48080 Bilbao, Spain and Ikerbasque, Basque Foundation for Science, Bilbao, Spain}
\email{\href{mailto:ioannis.parissis@ehu.es}{\textnormal{ioannis.parissis@ehu.eus}}}
\author[L.\ Roncal]{Luz Roncal}
\address[L.\ Roncal]{BCAM - Basque Center for Applied Mathematics,
48009 Bilbao, Spain \\ \newline \indent  and Ikerbasque, Basque Foundation for Science, Bilbao, Spain\\ \newline \indent and Universidad del Pa\'is Vasco, Bilbao, Spain}
\email{\href{mailto:lroncal@bcamath.org}{\textnormal{lroncal@bcamath.org}}}
\date{\today}
\numberwithin{figure}{section}
\begin{document}


\thanks{F.\ Di Plinio is partially supported by the FRA 2022 Program of University of Napoli Federico II, project ReSinAPAS -
Regularity and Singularity in Analysis, PDEs, and Applied Sciences. }

\thanks{M. Fl\'orez-Amatriain is partially supported by the projects CEX2021-001142-S, PID2021-122156NB-I00/AEI/10.13039/501100011033 funded by Agencia Estatal de Investigaci\'on and acronym ``HAMIP'', grants BERC 2022-2025 of the Basque Government and predoc Basque Government grant 2022 ``Programa Predoctoral de Formaci\'on de Personal Investigador No Doctor''}

\thanks{I. Parissis is partially supported by grant PID2021-122156NB-I00 funded by MICIU/AEI/10.13039/501100011033 and FEDER, UE, grant IT1615-22 of the Basque Government and IKERBASQUE}

\thanks{L. Roncal is partially supported by the projects CEX2021-001142-S, RYC2018-025477-I, and CNS2023-143893, funded by Agencia Estatal de Investigaci\'on, and PID2023-146646NB-I00 funded by MICIU/AEI/10.13039/501100011033 and by ESF+, grant BERC 2022-2025 of the Basque Government, and IKERBASQUE}

\thanks{Declarations of interest: none.}

\subjclass[2010]{Primary: 42B20. Secondary: 42B25}
\keywords{Rubio de Francia square function, sparse domination, sparse square functions, exponential square integrability, sharp weighted bounds, time-frequency analysis, localization principles}

\title[Rubio de Francia square functions]{Pointwise localization   and sharp weighted bounds\\ for Rubio de Francia square functions}
	
\begin{abstract} Let $H_\omega f$ be the Fourier restriction of $f\in L^2(\R)$ to an interval $\omega\subset \R$.
If $\Omega$ is an arbitrary collection of pairwise disjoint intervals, the square function of $\{H_\omega f: \omega \in \Omega\}$	is termed the Rubio de Francia square function $T^{\Omega}_{\operatorname{RF}}$. This article proves a pointwise   bound for $T^{\Omega}_{\operatorname{RF}}$ by a sparse operator involving local $L^2$-averages. A pointwise bound for the smooth version of $T^{\Omega}_{\operatorname{RF}}$ by a sparse square function is also proved. These pointwise localization principles lead to quantified  $L^p(w)$, $p>2$ and weak $L^p(w)$, $p\geq 2$ norm inequalities for $T^{\Omega}_{\operatorname{RF}}$. In particular,  the obtained weak $L^p(w)$ norm bounds are new for $p\geq 2$ and sharp for $p>2$. The proofs rely on sparse bounds for abstract balayages of Carleson sequences, local orthogonality and very elementary time-frequency analysis techniques. 

The paper also contains two results related to the outstanding conjecture that $T^{\Omega}_{\operatorname{RF}}$ is bounded on $L^2(w)$ if and only if $w\in A_1$. The conjecture is verified for radially decreasing even $A_1$ weights, and in full generality for the Walsh group analogue of $T^{\Omega}_{\operatorname{RF}}$.
\end{abstract}	
	\maketitle
	
\section{Introduction and main results} \label{SIntro}

The $L^p$-norm, $1<p<\infty$, equivalence between $f$ and its Littlewood--Paley square function lies at the foundation of the modern treatment of singular integrals. The fact that this equivalence extends to weighted $L^p(w)$ norms for weights in the Muckenhoupt class testifies the localized nature of the Littlewood--Paley inequalities. In contrast to the lacunary Littlewood--Paley configuration, this article addresses the localization properties of square functions of both smooth and rough multipliers supported on frequency intervals forming an \emph{arbitrary} pairwise disjoint, or finitely overlapping, collection; precise definitions are given below.

For  intervals $\omega\subset \R$, define the class of multipliers adapted to $\omega$ as follows. Say $m\in \mathbb M_\omega$ if $m\in \mathcal C^{D}(\omega)$ for a fixed large integer $D$ and
\[
\supp m\subset \omega, \qquad \sup_{\xi \in \omega} \sup_{0\leq j \leq D} \mathrm{dist}(\xi, \partial \omega)^{j} \left\| m^{(j)} \right\|_\infty \leq 1.
\]
To a collection of pairwise disjoint intervals $\Omega$, and a choice $\{m_\omega\in \mathbb M_\omega: \omega\in  \Omega\}$, associate the square function
\[
T^\Omega f\coloneqq  \left(\sum_{\omega \in \Omega} |T_\omega f|^2 \right)^{\frac12}, \qquad T_\omega f(x)\coloneqq \int_{\mathbb R} \widehat {f}(\xi ) m_\omega (\xi) \mathrm{e}^{-i\xi x} \frac{\mathrm{d}\xi}{\sqrt{2\pi}} , \qquad x\in \mathbb R.
\]
The operator
\begin{equation}
\label{e:rc} 
H_\omega   f(x)\coloneqq \int_{\omega} \widehat {f}(\xi ) \mathrm{e}^{-i\xi x} \frac{\mathrm{d}\xi}{\sqrt{2\pi}},\qquad x\in\R,	
\end{equation}
is an instance of $T_{\omega}$ corresponding   to the choice $m_\omega=\cic{1}_\omega$. This specific case of $T^{\Omega}$ is the so-called \textit{Rubio de Francia square function}, which is assigned the notation $T^{\Omega}_{\operatorname{RF}}$
\[
T^{\Omega}_{\operatorname{RF}}f(x) \coloneqq \left(\sum_{\omega\in\Omega}|H_\omega f(x)|^2\right)^{\frac12},\qquad x\in\R.
\]
With more details and discussion to follow, one of the main results of this paper is the pointwise control of $T^{\Omega}_{\operatorname{RF}}f$ by a \emph{sparse form}, see \S\ref{sec:sparse}, in a sharp way, leading to new and in several cases best possible weighted norm inequalities for this operator.

\begin{theorem}\label{thm:prelim} Let $\Omega$ be a collection of pairwise disjoint intervals and $T^{\Omega}_{\operatorname{RF}}$ be as above. For every $f\in L^2(\R)$ with compact support there exists a sparse collection $\mathcal S$ such that
\[
T^{\Omega}_{\operatorname{RF}}f\lesssim \sum_{Q\in \mathcal S} \langle f \rangle_{2,Q}   \ind_Q 
\]
and the $L^2$-average on the right hand side cannot be replaced by any $L^p$-average for any $p<2$. Furthermore there holds
\[
 \left\| T^{\Omega}_{\operatorname{RF}} \right\|_{L^2(w)\to L^{2,\infty}(w)}\lesssim \left[  [w]_{A_{1}}[w]_{A_{\infty}}\log\left(\mathrm{e}+  [w]_{A_{\infty}} \right)\right]^{\frac12}
 \]
 and for $2<p<\infty$
 \[
\left\| T^{\Omega}_{\operatorname{RF}} \right\|_{L^p(w)\to L^{p,\infty}(w)}\lesssim [w]_{A_{\frac p2}}^{ \frac{1}{p}}[w]_{A_\infty}^{ \frac{1}{p'}}.
 \]
 The first estimate is best possible up to the logarithmic term while the second estimate is best possible.
\end{theorem}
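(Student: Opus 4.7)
The backbone of the proof is the pointwise sparse domination, from which the weighted bounds follow by general sparse-to-weight technology. I would prove the sparse bound first, then extract the $L^p(w)\to L^{p,\infty}(w)$ estimates for $p>2$ from a standard sparse-form inequality involving $L^2$-averages, and finally handle the endpoint $p=2$ with a more delicate $A_1$-$A_\infty$ argument that accounts for the logarithmic factor. Sharpness is addressed at the end by explicit examples.

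\textbf{Sparse domination.} Fix $f\in L^2(\R)$ with compact support. I would build $\mathcal S$ by a single-scale stopping-time recursion on dyadic (or shifted dyadic) cubes: starting from a large cube $Q_0$ containing the support of $f$, select the maximal subcubes $Q'\subset Q_0$ for which $\langle f\rangle_{2,Q'}>C\langle f\rangle_{2,Q_0}$. Sparseness is standard once one verifies that the selected cubes cover at most half of $Q_0$. The actual content is then the \emph{principal local estimate}: on the complement of the selected subcubes inside $Q$ one has
\[
T^{\Omega}_{\operatorname{RF}}(f\ind_Q)(x)\lesssim \langle f\rangle_{2,Q}\qquad\text{for }x\in Q\setminus\bigcup Q'.
\]
To prove this I would split $f\ind_Q=g+b$ where $g$ is the conditional expectation at the stopping scale, bound $T^{\Omega}_{\operatorname{RF}} g$ via the Rubio de Francia $L^2$-theorem (the local orthogonality advertised in the abstract) yielding a bound by $\langle f\rangle_{2,Q}$ in $L^2$-mean, and treat the martingale difference $b$ via a tail estimate that exploits the $L^2$-integrability of the kernels in $\mathbb{M}_\omega$ away from the support cube. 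This is where the paper's framework of \emph{abstract balayages of Carleson sequences} enters: the Fourier projections $H_\omega b$ are reassembled as a balayage whose associated sequence is Carleson in the time-frequency sense, and a general sparse lemma for such balayages closes the estimate. Iteration over scales produces $\mathcal S$, and the $L^2$-average is structural since the only operator bound used is $\|T^{\Omega}_{\operatorname{RF}}\|_{L^2\to L^2}<\infty$.

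\textbf{Weighted bounds for $p>2$.} Pairing the sparse bound against a dual $L^{p',1}(w)$ function and normalizing gives a two-weight sparse form controlled by $\langle f\rangle_{2,Q}\langle g w\rangle_{1,Q}$. A Hölder/weighted-maximal argument reduces matters to the testing conditions equivalent to $[w]_{A_{p/2}}$, and the Hytönen--Pérez $A_q$-$A_\infty$ splitting produces the exponents $\frac{1}{p}$ and $\frac{1}{p'}$ in the bound, yielding the claimed
\[
\|T^{\Omega}_{\operatorname{RF}}\|_{L^p(w)\to L^{p,\infty}(w)}\lesssim [w]_{A_{p/2}}^{1/p}[w]_{A_\infty}^{1/p'}.
\]
For the endpoint $p=2$ one cannot apply the sparse form directly because the $L^2$-average is borderline. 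Instead, I would perform a Calderón--Zygmund decomposition of $f/w$ at level $\lambda$ with respect to $w$, use the $A_1$ hypothesis to handle the good part in $L^2(w)$, and for the bad part use the sparse bound together with a sharp reverse Hölder inequality with exponent $1+c/[w]_{A_\infty}$; this introduces exactly the $\log(\mathrm{e}+[w]_{A_\infty})$ loss.

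\textbf{Sharpness and main obstacle.} To show the $L^2$-average is optimal I would adapt the Bourgain--Rubio de Francia square root logarithmic lower bound, testing on $f=\ind_E$ for $E$ a small set and a lacunary choice of $\Omega$: any $L^p$-average with $p<2$ would yield a contradiction with the known sharp lower bound $\|T^{\Omega}_{\operatorname{RF}}\ind_E\|_2\gtrsim |E|^{1/2}(\log|E|^{-1})^{?}$. Sharpness of the $L^p(w)$ weak bound follows from testing against power weights $w(x)=|x|^{\alpha}$ with $\alpha$ optimized in terms of $[w]_{A_{p/2}}$ and the characteristic function of a small dyadic interval, computing both sides explicitly. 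The main obstacle, by a wide margin, is the principal local estimate in the sparse argument: the Rubio de Francia inequality is not pointwise and one must assemble the time-frequency Carleson structure by hand, which is what distinguishes this proof from the Littlewood--Paley case where lacunarity provides orthogonality for free.
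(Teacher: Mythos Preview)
Your outline has the right global shape, but the sparse domination step contains a genuine gap, and the weighted consequences are derived by a different mechanism than the one in the paper.

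\textbf{The gap in the sparse bound.} Your ``principal local estimate'' asserts $T^{\Omega}_{\operatorname{RF}}(f\ind_Q)(x)\lesssim \langle f\rangle_{2,Q}$ pointwise on the good set, but your justification only gives an $L^2$-mean control. Stopping on the averages $\langle f\rangle_{2,Q'}$ and appealing to the $L^2$-boundedness of $T^{\Omega}_{\operatorname{RF}}$ does not produce a pointwise bound; to run a Lerner-type iteration one needs a weak-type $(2,2)$ bound \emph{together with} a grand maximal truncation estimate, and for the rough projections $H_\omega$ there is no obvious Calder\'on--Zygmund structure to give the latter. The balayage/Carleson machinery you invoke is indeed used in the paper, but only for the \emph{smooth} intrinsic square function $G^\Omega$ (Theorem~\ref{t:sparsesmth}); it does not close the argument for $T^{\Omega}_{\operatorname{RF}}$. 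The paper instead passes to time-frequency model operators $P_{I,\omega}$ (which \emph{are} Calder\'on--Zygmund uniformly in $\omega$), stops not on averages of $f$ but on the vector maximal functions $\|\mathrm{M}P_{3I,\omega}f\|_{\ell^2(\omega)}$ and $\|\mathrm{M}[H_\omega(f\chi_I^9)]\|_{\ell^2(\omega)}$, and controls the oscillation $P_{3Z,\omega}f-P_{3I,\omega}f$ via a Lacey--Thiele maximal frequency truncation trick (Lemma~\ref{l:osclemma}). This oscillation lemma is the missing idea in your proposal.

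\textbf{Different route for the weighted bounds.} For $p>2$ the paper does not dualize the sparse form directly. It first proves an exponential good-$\lambda$ inequality (Theorem~\ref{sparseCWW}) showing $\|T_{2,\mathcal S}\|\lesssim [w]_{A_\infty}^{1/2}\|G_{2,\mathcal S}\|$ on any Lorentz scale, then uses the identity $\|G_{2,\mathcal S}\|_{L^p(w)\to L^{p,\infty}(w)}=\|T_{1,\mathcal S}\|_{L^{p/2}(w)\to L^{p/2,\infty}(w)}^{1/2}$ to reduce to the known sharp weak bounds for $T_{1,\mathcal S}$ (Hyt\"onen--Lacey--Li--Moen--P\'erez type, and Frey--Nieraeth at the endpoint). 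This same mechanism gives the endpoint $p=2$ with the logarithm; your proposed Calder\'on--Zygmund decomposition of $f/w$ is not what the paper does. Your approach via direct dualization and testing conditions is plausible in spirit but you would have to produce the mixed $[w]_{A_{p/2}}^{1/p}[w]_{A_\infty}^{1/p'}$ exponents from scratch, whereas the paper gets them for free by factoring through $G_{2,\mathcal S}$.

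\textbf{Sharpness.} The paper does not use explicit power-weight tests; it derives sharpness of the weak-$L^p(w)$ bounds for $p>2$ by interpolating any two such estimates with $w\in A_1$ and comparing with the unweighted $L^p$ growth $\|T^\Omega_{\operatorname{RF}}\|_{L^p}\sim p$ via the results of \cite{LPR}. The impossibility of replacing the $L^2$-average by an $L^p$-average with $p<2$ follows immediately from the failure of $T^\Omega_{\operatorname{RF}}$ on $L^q$ for $q<2$, not from a Bourgain-type logarithmic lower bound.
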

We will get Theorem~\ref{thm:prelim} as a consequence of more general corresponding results for square functions $T^\Omega$ defined in terms of more general multipliers in $\{\mathbb M_\omega\}_{\omega\in\Omega}$, as described above; see Theorem~\ref{t:sparsergh}, Corollary~\ref{cor:roughweighted} and \S\ref{sec:sharp}.

A smooth, intrinsic counterpart of $T^\Omega$ is defined as follows. For each interval $\omega\subset \mathbb R$, let $\Phi_\omega$ be the class of functions
\[
\Phi_\omega \coloneqq \left\{
\phi \in \mathcal S(R):\, \supp \phi \subset \omega, \,\,\sup_{0 \leq j \leq D} \ell_\omega^j \left\| \phi^{(j)}  \right\|_\infty \leq 1\right\}
\]
for a positive integer $D$ which we fix to be sufficient large throughout the paper. Then the \emph{intrinsic smooth Rubio de Francia square function} is the operator
\begin{equation}
\label{e:fomeg}
G^\Omega f \coloneqq \left(\sum_{\omega\in \Omega}   f_\omega  ^2 \right)^{\frac12}, \qquad   f_\omega (x)\coloneqq \sup_{\phi \in  \Phi_\omega} \left|f*  \widehat \phi  (x) \right|, \qquad x\in \R.
\end{equation}
Both definitions   naturally extend to higher dimensions and/or parameters by considering collections of disjoint rectangles with respect to a fixed choice of a basis in $\R^n$ and defining the corresponding frequency projection operators. The two square functions $G^\Omega ,T^\Omega$ are related by vector-valued Littlewood--Paley inequalities, and their $L^p(\R)$ behavior, and in fact their $L^p(w)$-boundedness for weights $w\in A_p$ as well, $1<p<\infty$, are thus qualitatively equivalent.

The well-known result by Rubio de Francia \cite{RdF1985}  tells us that the operators $T^{\Omega}_{\operatorname{RF}}, G^\Omega$ are bounded on $ L^p(\R) $ for $ p \geq 2 $; see \cites{Journe,Lacey2007} for the higher parametric case.  Rubio de Francia's reliance on  local orthogonality in \cite{RdF1985} is embodied by the main step of his proof, namely  the sharp function pointwise inequality \begin{equation}
	\label{e:rdf} \left[G^\Omega f\right]^{\#} \leq C \sqrt{\mathrm{M}(|f|^2)}.
\end{equation}

\subsection{Pointwise sparse domination of $T^\Omega,G^\Omega$}\label{sec:sparse}
Estimate \eqref{e:rdf} also yields  $L^p(w)$-norm bounds for weights $w$ in appropriate Muckenhoupt classes. With the dual intent of    strengthening \eqref{e:rdf} and of  precisely quantifying these weighted estimates, we establish pointwise  domination principles for both $T^\Omega$ and $G^\Omega$, respectively involving the case $p=2$ of the  \emph{sparse operators}
\begin{equation}\label{e:sparsestuf}
	T_{p,\mathcal S} f\coloneqq \sum_{Q\in \mathcal S} \langle f\rangle_{p,Q} \cic{1}_Q, \qquad  G_{p,\mathcal S} f\coloneqq \left(\sum_{Q\in \mathcal S} \langle f\rangle_{p,Q}^2 \cic{1}_Q\right)^{\frac12}, \qquad 0<p<\infty
\end{equation}
associated to a \emph{sparse} collection $\mathcal S$ of intervals on the real line. The notations and definitions appearing in \eqref{e:sparsestuf} and in what follows are standard, and are recalled at the end of the introduction.

\begin{theorem}
\label{t:sparsesmth} Let $\Omega$ be a collection of pairwise disjoint intervals. 
For each $f\in L^2(\R)$ with compact support there exists a sparse collection $\mathcal S$ such that
\[
G^{\Omega} f \lesssim G_{2,\mathcal S} f\]
pointwise almost everywhere.
The implicit constant in the above inequality is absolute.
\end{theorem}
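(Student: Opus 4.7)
The plan is to adapt the Lerner-style principal-cubes stopping-time construction which, given the sub-linearity of $G^\Omega$ and its $L^2$-boundedness (Rubio de Francia's theorem), produces the desired pointwise sparse domination provided one has a suitable local decay estimate for the tail.

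First reduce to $\Omega$ finite by monotone convergence; combining estimates over three shifted dyadic lattices extends the result to the general statement. Let $Q^0$ be a dyadic cube containing $\supp f$, place $Q^0\in\mathcal S$, and recursively adjoin to $\mathcal S$ the collection $\mathcal{ch}(Q)$ of maximal dyadic subcubes $Q'\subseteq Q$ entirely contained in
\[
\bigl\{\widetilde M_2(f\cic{1}_{3Q})>A\langle f\rangle_{2,3Q}\bigr\}\cup\bigl\{G^\Omega(f\cic{1}_{3Q})>A\langle f\rangle_{2,3Q}\bigr\},
\]
with $\widetilde M_2$ the dyadic $L^2$-maximal function. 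Rubio de Francia's $L^2$-bound on the local piece combined with Chebyshev, together with the weak $L^2$-bound for $\widetilde M_2$, yields that this union has measure $\le\tfrac12|Q|$ for $A$ sufficiently large; hence $\mathcal S$ is sparse and the sets $E_Q := Q\setminus\bigcup\mathcal{ch}(Q)$ partition $Q^0$ up to a null set.

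The crux is the pointwise bound
\[
G^\Omega f(x)^2 \lesssim \sum_{R\in\mathcal S,\ R\supseteq Q}\langle f\rangle_{2,R}^2,\qquad x\in E_Q.
\]
Denoting by $Q=R_0\subsetneq R_1\subsetneq\cdots$ the $\mathcal S$-ancestors of $Q$, I would write
\[
f = f\cic{1}_{3Q} + \sum_{k\ge 1} f\cic{1}_{3R_k\setminus 3R_{k-1}},
\]
and use sub-linearity of each $f_\omega$ together with the triangle inequality in $\ell^2(\Omega)$ to obtain
\[
G^\Omega f(x) \le G^\Omega(f\cic{1}_{3Q})(x) + \sum_{k\ge 1} G^\Omega(f\cic{1}_{3R_k\setminus 3R_{k-1}})(x).
\]
The first summand is $\le A\langle f\rangle_{2,3Q}$ by construction of $E_Q$. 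The remaining analytic input is a local decay
\[
G^\Omega g(x) \lesssim 2^{-\alpha k}\langle g\rangle_{2,3R_k},\qquad x\in R_0,\ \supp g\subseteq 3R_k\setminus 3R_{k-1},
\]
for some absolute $\alpha>0$; Cauchy--Schwarz then gives $\bigl(\sum_{k\ge 1}2^{-\alpha k}\langle f\rangle_{2,3R_k}\bigr)^{2}\lesssim\sum_k\langle f\rangle_{2,R_k}^{2}$, yielding the desired squared pointwise estimate.

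The main obstacle is the tail decay: a geometric rate uniform in $\omega$ requires carefully managing the interplay between spatial localization near $R_0$ and frequency disjointness of $\{\omega\}$. I would split $\Omega$ at the critical scale $\ell_\omega\ell(R_k)\sim 1$. On the high-frequency side the Schwartz decay $|\widehat\phi(x-y)|\lesssim\ell_\omega(1+\ell_\omega|x-y|)^{-D}$ valid for $\phi\in\Phi_\omega$ gives pointwise control with surplus factor $(\ell_\omega\ell(R_k))^{1-D}$, summable over $\omega$ for $D$ large. On the low-frequency side, a truncated-orthogonality argument---testing $\sum_\omega|(g)_\omega|^2$ against a bump adapted to $3R_k$ and invoking Rubio de Francia's local orthogonality---absorbs the growth $|R_k|/|R_0|$ coming from the spatial average. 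This is where the sparse bounds for abstract balayages of Carleson sequences advertised in the introduction enter, encoding the Carleson structure of the wave-packet coefficients associated to the collection $\{\Phi_\omega\}_{\omega\in\Omega}$.
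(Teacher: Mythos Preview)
There is a genuine gap. The heart of your argument is the tail-decay estimate
\[
G^\Omega g(x)\lesssim 2^{-\alpha k}\langle g\rangle_{2,3R_k},\qquad x\in R_0,\ \supp g\subset 3R_k\setminus 3R_{k-1},
\]
but this fails as written. Take $\Omega=\{[0,1)\}$, $R_0=[0,1]$, $R_1=[0,2^N]$, and $g$ supported in $[2,3]\subset 3R_1\setminus 3R_0$. A bump $\phi\in\Phi_{[0,1)}$ gives $|g*\widehat\phi(0)|\sim\|g\|_1$ while $\langle g\rangle_{2,3R_1}\sim 2^{-N/2}\|g\|_2$, so the inequality is off by an arbitrary power of $2^N$. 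You may object that this configuration is excluded by the maximal-function stopping; that is correct, but then the estimate is no longer about a generic $g$ supported in the annulus. What the stopping data actually delivers is only the intermediate control $\langle f\rangle_{2,3Q'}\lesssim\langle f\rangle_{2,3R_k}$ for $R_{k-1}\subsetneq Q'\subsetneq R_k$, and feeding this back into the annular piece yields at best $G^\Omega(f\cic{1}_{3R_k\setminus 3R_{k-1}})(x)\lesssim\langle f\rangle_{2,3R_k}$ with \emph{no} decay in $k$. Summing in $\ell^1$ then lands on $T_{2,\mathcal S}f$, strictly weaker than the $G_{2,\mathcal S}f$ bound the theorem asserts. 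Your closing paragraph does not repair this: the high-frequency split gives decay in $\ell_\omega\ell(R_k)$, not in the stopping index $k$, and invoking the balayage machinery for the low frequencies is not a tail estimate for the annular piece but a different organizational principle altogether.

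The paper bypasses the annular decomposition entirely. It discretizes $G^\Omega f$ pointwise into a wave-packet square function $W^\Omega f$, and the key structural fact is the identity $(W^\Omega f)^2=A_{\mathcal D}[\mathsf a]$ with $\mathsf a=\{[f]^2_{\mathbb S^\Omega(I)}:I\in\mathcal D\}$: the \emph{square} of the operator is already a linear balayage. Local orthogonality (Lemma~\ref{l:orthol}) shows $\mathsf a$ is Carleson subordinated to $|f|^2$, and Proposition~\ref{p:balayage} converts the balayage into $T_{1,\mathcal S}(|f|^2)=(G_{2,\mathcal S}f)^2$. No off-diagonal estimate for $G^\Omega$ is ever needed, and the $\ell^2$ structure in the sparse bound comes for free from working with the squared operator---this is precisely what your $\ell^1$-subadditive annular splitting cannot reproduce.
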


\begin{theorem}\label{t:sparsergh} Let $\Omega$ be a collection of pairwise disjoint intervals. 
For each $f\in L^2(\R)$ with compact support there exists a sparse collection $\mathcal S$ such that
\[
T^{\Omega} f \lesssim  T_{2,\mathcal S} f
\]
pointwise almost everywhere.
The implicit constant in the above inequality is absolute.
\end{theorem}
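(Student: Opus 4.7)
The plan is to reduce Theorem~\ref{t:sparsergh} to the smooth case of Theorem~\ref{t:sparsesmth} by splitting each rough multiplier $m_\omega \in \mathbb{M}_\omega$ into a smooth bulk piece and a boundary-layer piece concentrated near $\partial \omega$. The smooth bulk falls within the scope of Theorem~\ref{t:sparsesmth}, whereas the boundary piece is analyzed as a balayage of a Carleson sequence indexed by the endpoints of the intervals in $\Omega$, as advertised in the abstract. The boundary analysis produces only a \emph{linear} sparse bound, which explains why the right-hand side of Theorem~\ref{t:sparsergh} is the linear sparse form $T_{2, \mathcal S}$ rather than the sparse square function $G_{2, \mathcal S}$ of Theorem~\ref{t:sparsesmth}: one loses the intrinsic square function structure in the boundary decomposition and must absorb the loss into a larger linear sparse operator.

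For each $\omega \in \Omega$, write $m_\omega = m_\omega^{\mathrm{in}} + m_\omega^{\partial}$, where $m_\omega^{\mathrm{in}} = m_\omega \chi_\omega$ for a smooth cutoff $\chi_\omega$ supported on the middle half of $\omega$; by construction $m_\omega^{\mathrm{in}}$ lies, up to an absolute constant factor, in $\Phi_\omega$, and the remainder $m_\omega^{\partial} = m_\omega(1-\chi_\omega)$ is supported in two neighborhoods of the endpoints $a_\omega, b_\omega$. Applying Theorem~\ref{t:sparsesmth} to $\{m_\omega^{\mathrm{in}}\}$ gives
\[
\Bigl(\sum_\omega |T_{m_\omega^{\mathrm{in}}} f|^2\Bigr)^{1/2} \lesssim G_{2, \mathcal S_0} f \leq T_{2, \mathcal S_0} f
\]
for some sparse $\mathcal S_0$. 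Split further $m_\omega^{\partial} = m_\omega^{\partial, a} + m_\omega^{\partial, b}$ into pieces concentrated at the two endpoints, and observe that $m_\omega^{\partial, a}(\xi) = \widetilde m_\omega(\xi - a_\omega)$ for a rescaled multiplier $\widetilde m_\omega$ supported near $0$. Thus $T_{m_\omega^{\partial, a}} f$ is the frequency-$a_\omega$ modulation of a fixed-type zero-centered operator, and the square function $\bigl(\sum_\omega |T_{m_\omega^{\partial, a}} f|^2\bigr)^{1/2}$ is a balayage of the Carleson sequence $\{a_\omega: \omega \in \Omega\}$; this is indeed a Carleson sequence precisely because the intervals in $\Omega$ are pairwise disjoint. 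The abstract sparse bound for balayages of Carleson sequences, combined with local $L^2$ orthogonality of the modulations on each stopping interval in the sparse construction, yields $\bigl(\sum_\omega |T_{m_\omega^{\partial, a}} f|^2\bigr)^{1/2} \lesssim T_{2, \mathcal S_1} f$ for a sparse $\mathcal S_1$. The same argument handles the right-endpoint piece, and amalgamating the resulting sparse collections into a single sparse $\mathcal S$ delivers the desired bound.

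The main obstacle lies in the boundary analysis: identifying the endpoint-indexed square function as a sparse-dominated balayage of a Carleson sequence, and then upgrading the natural $L^2$-average control to a pointwise estimate. While the Carleson property of $\{a_\omega\}$ is immediate from disjointness, the pointwise sparse domination forces one to invoke the abstract balayage framework coupled with local $L^2$ orthogonality between the endpoint modulations on each stopping interval. This is also where one must verify that the sparse constant depends on none of $\Omega$, the choice of multipliers $\{m_\omega\}$, or $f$, as required for the absolute implicit constant promised in the theorem statement.
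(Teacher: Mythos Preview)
Your reduction has a genuine gap at the boundary step. After writing $m_\omega = m_\omega^{\mathrm{in}} + m_\omega^{\partial}$, the piece $m_\omega^{\partial}$ is \emph{not} smoother than $m_\omega$: for the prototype $m_\omega = \cic{1}_\omega$, the function $m_\omega^{\partial,a}$ still has a jump discontinuity at $a_\omega$, and its derivatives near the endpoint blow up exactly as those of $m_\omega$ do. A single smooth cutoff only moves the singularity, it does not remove it; the entire difficulty of the rough case is concentrated in the region you have called the boundary layer. To actually resolve it you would need an infinite Whitney decomposition of $\omega$ near each endpoint, not one cutoff. At that point you are essentially back at the paper's model operator \eqref{e:tfp}.

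Your identification of the endpoint square function as a ``balayage of the Carleson sequence $\{a_\omega\}$'' is not well-posed in the framework of Section~\ref{s:cww}. There, Carleson sequences are indexed by dyadic intervals $I\in\mathcal D$ and satisfy the packing condition \eqref{e:packing}; the endpoints $a_\omega$ are just real numbers, and pairwise disjointness of the $\omega$'s says nothing of the required form. More importantly, even once the Whitney decomposition is in place, the balayage machinery of Proposition~\ref{p:balayage} applied to $|f|^2$ yields $G_{2,\mathcal S}f$, not $T_{2,\mathcal S}f$; getting the linear sparse operator on the right requires a different argument. The paper does \emph{not} proceed via balayages here: it runs a stopping-time iteration directly on the time-frequency projections $P_{3I,\omega}f$ (Lemma~\ref{l:mainiter}), and the heart of the matter is the oscillation estimate of Lemma~\ref{l:osclemma}, a maximal frequency truncation argument in the spirit of Lacey--Thiele that controls $|P_{3Z,\omega}f - P_{3I,\omega}f|$ by maximal functions of $P_{3I,\omega}f$ and of $H_\omega(f\chi_I^9)$. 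This is where the roughness is actually tamed, and it has no analogue in your proposal.
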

Pointwise domination of H\"older-continuous Calder\'on--Zygmund operators by the sparse operator $T_{1,\mathcal  S}$ is the keystone of Lerner's simple re-proof \cite{LerIMRN} of Hyt\"onen's $A_2$ theorem \cite{HytA2}. Since then, $T_{p,\mathcal S}$ have become ubiquitous in singular integral theory, to the point that  an exhaustive list of references is well beyond the purview of this article. On the other hand, the sparse square functions  $G_{1,\mathcal S}$, $G_{p,\mathcal S}$ have previously appeared in the context of weighted norm inequalities for square functions of Littlewood--Paley and Marcinkiewicz type, see e.g.\ \cites{Br2020,DSLR2016,LerMRL2019} and references therein. Thus, the specific relevance of the sparse domination principles of Theorems \ref{t:sparsesmth} and \ref{t:sparsergh}, beyond the strengthening of \eqref{e:rdf}, is explained by the next proposition involving weights and $A_p$ weight constants, whose    standard definitions are also recalled at the end of the introduction.

\begin{proposition} \label{p:weight} The estimates below hold with implicit constants possibly depending only on  the exponents $p,q$ appearing therein and in particular independent of the sparse collection $\mathcal S$.
\begin{itemize}\setlength\itemsep{.6em}
\item[$\mathrm{(i)}$]  $\displaystyle \left\| G_{2,\mathcal S} \right\|_{L^2(w)\to L^{2,\infty}(w)}\lesssim \left[  [w]_{A_{1}}\log\left(\mathrm{e
}+  [w]_{A_{\infty}} \right)\right]^{\frac12}$.
\item[$\mathrm{(ii)}$]  $\displaystyle \left\| T_{2,\mathcal S} \right\|_{L^2(w)\to L^{2,\infty}(w)}\lesssim \left[  [w]_{A_{1}}[w]_{A_{\infty}}\log\left(\mathrm{e}+  [w]_{A_{\infty}} \right)\right]^{\frac12}$.

\item[$\mathrm{(iii)}$]  $\displaystyle \left\| G_{2,\mathcal S} \right\|_{L^p(w)\to L^{p,\infty}(w)}\lesssim [w]_{A_{\frac p2}}^{ \frac{1}{p}}[w]_{A_\infty}^{ \frac12-\frac{1}{p}}$, $\qquad 2<p<\infty$.

\item[$\mathrm{(iv)}$]  $\displaystyle \left\| T_{2,\mathcal S} \right\|_{L^p(w)\to L^{p,\infty}(w)}\lesssim [w]_{A_{\frac p2}}^{ \frac{1}{p}}[w]_{A_\infty}^{ \frac{1}{p'}}$, $\qquad 2<p<\infty$.
\item[$\mathrm{(v)}$]  $\displaystyle \left\| G_{2,\mathcal S} \right\|_{L^p(w)}\lesssim \min\left\{[w]_{A_{\frac p2}}^{\max\left\{\frac{1}{p-2},\frac12\right\}},[w]_{A_q}^{\frac12}\right\}$,  $\qquad 2\leq 2q<p<\infty$.
\item[$\mathrm{(vi)}$]  $\displaystyle \left\| T_{2,\mathcal S} \right\|_{L^p(w)}\lesssim \min\left\{[w]_{A_{\frac p2}}^{\max\left\{\frac{1}{p-2},1\right\}},[w]_{A_q}\right\}$, $\qquad 2\leq 2q<p< \infty$.
\end{itemize}
\end{proposition}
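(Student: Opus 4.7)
The plan is to deduce all six estimates from sharp weighted norm bounds for the classical $L^1$-averaging sparse operator $A_{\mathcal S}\coloneqq T_{1,\mathcal S}$, which are by now standard in the sparse operator literature.

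\textbf{Reduction for $G_{2,\mathcal S}$ (parts (i), (iii), (v)).} The key observation is the pointwise identity $(G_{2,\mathcal S}f)^2 = A_{\mathcal S}(|f|^2)$, which yields, by a change of variable in the distribution function,
\[
\|G_{2,\mathcal S}f\|_{L^p(w)}^2 = \|A_{\mathcal S}(|f|^2)\|_{L^{p/2}(w)},\qquad \|G_{2,\mathcal S}f\|_{L^{p,\infty}(w)}^2 = \|A_{\mathcal S}(|f|^2)\|_{L^{p/2,\infty}(w)}.
\]
Applying this identity with $|f|^2$ in place of $f$ and $p/2$ in place of $p$, estimate (i) follows from the Lerner--Ombrosi--P\'erez sharp weak-$(1,1)$ bound $\|A_{\mathcal S}\|_{L^1(w)\to L^{1,\infty}(w)}\lesssim [w]_{A_1}\log(\mathrm{e}+[w]_{A_\infty})$; (iii) from the Hyt\"onen--P\'erez sharp weak-$(q,q)$ bound with $q=p/2>1$; and (v) from the sharp strong-$(q,q)$ bound $[w]_{A_q}^{\max\{1,1/(q-1)\}}$ of Lerner, combined with Muckenhoupt class comparison $A_q\subset A_{p/2}$ to produce the alternative $[w]_{A_q}^{1/2}$ estimate.

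\textbf{Duality and sparse forms for $T_{2,\mathcal S}$ (parts (iv), (vi)).} No analogous pointwise identity is available, so one dualizes $T_{2,\mathcal S}$ into the bilinear sparse form of type $(2,1)$,
\[
\langle T_{2,\mathcal S}f, g\rangle = \sum_{Q\in\mathcal S}\langle f\rangle_{2,Q}\langle g\rangle_{1,Q}|Q|.
\]
The sharp one-weight bounds for such $(r,s)$-type sparse forms, developed in the line of Bernicot--Frey--Petermichl and Culiuc--Di Plinio--Ou, yield the strong-type exponent $[w]_{A_{p/2}}^{\max\{1/(p-2),1\}}$ in (vi) and the weak-type exponent $[w]_{A_{p/2}}^{1/p}[w]_{A_\infty}^{1/p'}$ in (iv), once the form is paired with $gw$ and dualized against $L^p(w)$. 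The $[w]_{A_q}$-alternative in (vi) follows again from Muckenhoupt class comparison.

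\textbf{The weak-type bound (ii).} This is the delicate point: compared with (i), an extra factor $[w]_{A_\infty}^{1/2}$ arises, reflecting the absence in $T_{2,\mathcal S}$ of the orthogonal identity enjoyed by $G_{2,\mathcal S}$. The plan is a Calder\'on--Zygmund decomposition at level $\lambda$ relative to the measure $w\,\d x$, exploiting the $A_1$-condition to control $\langle w\rangle_Q$ pointwise by $w$. This splits $T_{2,\mathcal S}f$ into a ``good'' part, handled by the strong-$L^2(w)$ bound coming from (vi) at $p=2^+$ via extrapolation or direct argument, and a ``bad'' part, whose contribution to the $w$-measure of the super-level set is controlled through the sparse Carleson embedding theorem with constant $[w]_{A_\infty}\log(\mathrm{e}+[w]_{A_\infty})$. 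The extra $[w]_{A_\infty}^{1/2}$ is precisely the square root of the $A_\infty$ Carleson constant absorbed in this second step.

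The main obstacle throughout is the precise bookkeeping of the two weight constants $[w]_{A_{p/2}}$ and $[w]_{A_\infty}$, and in the endpoint cases (i) and (ii) the logarithmic factor $\log(\mathrm{e}+[w]_{A_\infty})$. This requires the sharp two-constant weighted theory in the style of Hyt\"onen--P\'erez; the weak-type $(ii)$ in particular is the only estimate where a new argument beyond direct reduction to $A_{\mathcal S}$- or $(2,1)$-sparse form bounds is needed.
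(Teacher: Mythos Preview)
Your treatment of (i), (iii) and the leftmost bounds in (v), (vi) matches the paper exactly: the pointwise identity $(G_{2,\mathcal S}f)^2=T_{1,\mathcal S}(|f|^2)$ reduces the $G_{2,\mathcal S}$ estimates to known sharp bounds for $T_{1,\mathcal S}$, and the leftmost part of (vi) is indeed the Bernicot--Frey--Petermichl bound for the $(2,1)$ sparse form.

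The gaps are in (ii), (iv), and the rightmost parts of (v), (vi). For (iv) you assert that the mixed weak-type exponent $[w]_{A_{p/2}}^{1/p}[w]_{A_\infty}^{1/p'}$ is already contained in the $(r,s)$-sparse form literature; the paper explicitly states that (ii), (iv) and the rightmost bounds in (v), (vi) do not appear in prior work, and the references you name treat strong-type estimates. For (ii) your Calder\'on--Zygmund sketch invokes a ``strong-$L^2(w)$ bound coming from (vi) at $p=2^+$'', but the constant in (vi) blows up as $p\downarrow 2$, so there is no usable endpoint input; the bad-part argument is also left at the level of a heuristic. For the rightmost bounds in (v) and (vi), ``Muckenhoupt class comparison'' $[w]_{A_{p/2}}\le [w]_{A_q}$ plugged into the leftmost bounds only recovers the claimed exponents when $p\ge 4$ (resp.\ $p\ge 3$); for smaller $p$ it yields $[w]_{A_q}^{1/(p-2)}$, which is strictly worse.

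The paper's device for all four of these items is a single inequality you do not use: an exponential good-$\lambda$ comparison of Chang--Wilson--Wolff type between $T_{2,\mathcal S}$ and $G_{2,\mathcal S}$, namely
\[
w\bigl(\{A_{\mathcal S}[\mathsf a]>2\lambda,\ \sqrt{A_{\mathcal S}[\mathsf a^2]}\le\gamma\lambda\}\bigr)\le C\exp\Bigl(-\tfrac{\delta}{\gamma^2[w]_{A_\infty}}\Bigr)\,w\bigl(\{A_{\mathcal S}[\mathsf a]>\lambda\}\bigr),
\]
which for $\mathsf a=\{\langle f\rangle_{2,Q}\}$ gives $\|T_{2,\mathcal S}\|_{L^{q,r}(w)\to L^{s,t}(w)}\lesssim [w]_{A_\infty}^{1/2}\|G_{2,\mathcal S}\|_{L^{q,r}(w)\to L^{s,t}(w)}$ in any Lorentz setting. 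Then (ii) and (iv) follow immediately from (i) and (iii) with the extra $[w]_{A_\infty}^{1/2}$ factor, and the rightmost bounds in (v), (vi) are obtained by Marcinkiewicz interpolation of the weak-type estimates (iii), (iv) over $p\in(2q,\infty)$, using $[w]_{A_{p/2}},[w]_{A_\infty}\lesssim[w]_{A_q}$ at both endpoints. This is the missing idea in your plan.
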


An application of  Proposition \ref{p:weight} immediately entails two corollaries of our main results.
\setcounter{counter}{2}

\begin{corollary}
\label{cor:smoothweighted}  Estimates \emph{(i)},  \emph{(iii)} and \emph{(v)} of Proposition \ref{p:weight} hold for the intrinsic  smooth square function $G^\Omega$ in place of $G_{2,\mathcal S}$.
\end{corollary}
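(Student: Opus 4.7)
The proof will be a short deduction assembling two ingredients already in place: the pointwise sparse domination $G^\Omega f\lesssim G_{2,\mathcal S}f$ of Theorem~\ref{t:sparsesmth}, and the sparse-form weighted estimates (i), (iii), (v) from Proposition~\ref{p:weight}. The plan is to fix $p$ and $w$ in the relevant range, apply Theorem~\ref{t:sparsesmth} to a compactly supported $f\in L^2(\R)\cap L^p(w)$ to produce a (data-dependent) sparse collection $\mathcal S=\mathcal S(f)$ with $G^\Omega f(x)\lesssim G_{2,\mathcal S}f(x)$ for a.e.\ $x\in\R$, and then take the $L^p(w)$ (or weak $L^p(w)$) quasi-norm on both sides. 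Crucially, the estimates in Proposition~\ref{p:weight} are uniform over the sparse collection, so the bound transfers to $G^\Omega$ with exactly the same weight-constant dependence.

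Concretely, for estimate (i) I would write
\[
\left\| G^\Omega f \right\|_{L^{2,\infty}(w)} \lesssim \left\| G_{2,\mathcal S} f \right\|_{L^{2,\infty}(w)} \lesssim \left[[w]_{A_1}\log(\mathrm{e}+[w]_{A_\infty})\right]^{\frac12}\|f\|_{L^2(w)},
\]
and analogously insert the corresponding right-hand sides from (iii) and (v) when working in the strong $L^p(w)$ or weak $L^p(w)$ setting with $2<p<\infty$. Since the implicit constants in Proposition~\ref{p:weight} depend only on the Lebesgue exponents and not on $\mathcal S$, the passage through the intermediate sparse object is harmless.

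The only genuinely non-trivial task is the removal of the compact support assumption required by Theorem~\ref{t:sparsesmth}. I would do this by a standard exhaustion argument: write $f=\lim_{R\to\infty} f\cic{1}_{[-R,R]}$, observe that $G^\Omega(f\cic{1}_{[-R,R]})$ increases to $G^\Omega f$ pointwise a.e.\ along any increasing sequence of compactly supported approximants (by monotone convergence applied inside each $\omega$-summand via $\Phi_\omega$, since the suprema are taken over nonnegative quantities) or alternatively by Fatou's lemma, and then invoke the bound obtained above for each truncation with a constant independent of $R$. Passing to the limit via monotone convergence on the left-hand side, and using density of bounded compactly supported functions in $L^p(w)$, yields the claim for general $f\in L^p(w)$.

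I do not expect any serious obstacle: the entire content of the corollary is that the pointwise sparse bound of Theorem~\ref{t:sparsesmth} is \emph{sparse-uniform}, so Proposition~\ref{p:weight} applies directly. The mildest subtlety is the weak-type case, where one must remember that the sparse collection depends on $f$, which is why the \emph{uniform} nature of Proposition~\ref{p:weight}(i) is indispensable; beyond this the argument is essentially automatic.
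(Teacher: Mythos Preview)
Your proposal is correct and follows exactly the route the paper takes: the paper simply states that ``an application of Proposition~\ref{p:weight} immediately entails'' the corollary, meaning one combines the pointwise domination $G^\Omega f\lesssim G_{2,\mathcal S}f$ from Theorem~\ref{t:sparsesmth} with the sparse-uniform bounds of Proposition~\ref{p:weight}, which is precisely what you do. The only minor caveat is that your claim that $G^\Omega(f\cic{1}_{[-R,R]})$ \emph{increases} to $G^\Omega f$ is not literally correct (truncation does not yield monotonicity for these convolution-type suprema), but your alternative via Fatou's lemma is the right fix and suffices for the density argument.
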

\setcounter{corollary}{0}
\setcounter{counter}{3}
\begin{corollary}
\label{cor:roughweighted}
 Estimates \emph{(ii)},  \emph{(iv)} and \emph{(vi)} of Proposition \ref{p:weight} hold for  $T^\Omega$ in place of $T_{2,\mathcal S}$.
\end{corollary}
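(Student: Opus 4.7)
The plan is to combine the pointwise sparse domination of Theorem~\ref{t:sparsergh} with the uniform-in-$\mathcal S$ weighted estimates for the sparse operator $T_{2,\mathcal S}$ provided by Proposition~\ref{p:weight}\,(ii), (iv), (vi). The key observation, which makes the transfer immediate, is that the bounds in Proposition~\ref{p:weight} depend only on the exponents and the weight constants, but \emph{not} on the particular sparse collection $\mathcal S$.

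First, I would reduce to the case of compactly supported $f\in L^2(\R)$ by a standard truncation. For such an $f$, Theorem~\ref{t:sparsergh} furnishes a sparse collection $\mathcal S = \mathcal S(f)$ with
\[
T^{\Omega} f(x) \lesssim T_{2,\mathcal S} f(x) \qquad \text{for a.e.\ } x\in \R.
\]
Since $T^\Omega f$ and $T_{2,\mathcal S} f$ are nonnegative, this pointwise inequality upgrades, for any weight $w$ and any $1\leq p<\infty$, to
\[
\Norm{T^{\Omega} f}{L^{p}(w)} \lesssim \Norm{T_{2,\mathcal S} f}{L^{p}(w)}, \qquad \Norm{T^{\Omega} f}{L^{p,\infty}(w)} \lesssim \Norm{T_{2,\mathcal S} f}{L^{p,\infty}(w)},
\]
with absolute implicit constants. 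Applying in turn the estimates (ii), (iv) and (vi) of Proposition~\ref{p:weight} to the right-hand sides yields the three claimed bounds for compactly supported $f\in L^2(\R)$.

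To pass to arbitrary $f\in L^p(w)$, I would exhaust $\R$ by an increasing sequence of compact sets and apply the bounds just obtained to the truncations $f_N\coloneqq f\ind_{[-N,N]}$. Since each $T_\omega$ is $L^2$-bounded and the Fatou-type lower semicontinuity
\[
T^{\Omega} f(x) \leq \liminf_{N\to \infty}T^{\Omega} f_{N}(x)
\]
holds pointwise (as a square-sum of frequency projections), the strong and weak $L^p(w)$ bounds for $T^{\Omega}f_N$ transfer to $T^{\Omega}f$ via Fatou's lemma.

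The main --- in fact the only --- nontrivial content has already been absorbed into Theorem~\ref{t:sparsergh} and Proposition~\ref{p:weight}; the proof of the corollary itself is a mechanical juxtaposition of the two. The only point requiring care is the uniformity of the constants in Proposition~\ref{p:weight} with respect to $\mathcal S$, but this is intrinsic to the proofs there, which rely solely on the sparseness condition.
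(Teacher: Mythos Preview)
Your proof is correct and follows exactly the approach the paper intends: combine the pointwise sparse domination of Theorem~\ref{t:sparsergh} with the uniform-in-$\mathcal S$ bounds of Proposition~\ref{p:weight}, which is precisely what the paper means by ``An application of Proposition~\ref{p:weight} immediately entails two corollaries of our main results.'' Your additional care with the density/Fatou argument is a reasonable elaboration of a step the paper leaves implicit.
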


\begin{proof}[Proof of Proposition~\ref{p:weight}] Points (i), (iii) and the leftmost estimates  in  (v) and (vi) are essentially special cases of previously known results.  For (i), (iii), and the leftmost estimate in (v), rely on the observation that \[
\left\| G_{2,\mathcal S} \right\|_{L^{p }(w)\to L^{p,\infty }(w)} = \left\| T_{1,\mathcal S} \right\|_{L^{\frac p2 }(w)\to L^{\frac p2,\infty }(w)}^{\frac12}, \qquad 
\left\| G_{2,\mathcal S} \right\|_{L^p(w)} = \left\| T_{1,\mathcal S} \right\|_{L^{\frac p2 }(w)}^{\frac12}\]
together with the sharp bound for the appropriate weighted norm of $T_{1,\mathcal S} $. The weak-type $L^{q}(w)$ bound for $T_{1,\mathcal S}$ was sharply quantified in \cite{LISU}*{Theorem 1.2} for $q>1$ and  in \cite{FreyNieraeth}*{Theorem 1.4}  for $q=1$, whence (iii) and (i) respectively; the latter  estimate for Calder\'on--Zygmund operators for $q=1$ is contained in \cite{LOP}. The strong type $L^{q}(w)$ bound for $T_{1,\mathcal S}$ is classical, see e.\ g.\ \cites{BFP,CUP12,FreyNieraeth,HL, LN, M}.  Finally, the leftmost estimate in    (vi) is from \cite{BFP}*{Proposition 6.4}. 

The bounds  (ii), (iv) and the rightmost estimates in (v) and (vi) seemingly do not appear in past literature. Estimates (ii) and (iv) are obtained by combining (i) and (iii), respectively, with   Corollary \ref{c:goodl} below, cf.\ Section \ref{s:cww}. This corollary  is a sparse operator version of the exponential square good-$\lambda$ of Chang, Wilson and Wolff \cite{CWW}.
 The rightmost estimate in (v) is obtained by interpolating the weak-type estimates in (iii) for $\frac p2\in(q,\infty)$. Likewise, the rightmost estimate in (vi) is obtained by interpolating the weak-type estimates of (iv) in the same open range of exponents.
\end{proof} 

{
\subsection{On the sharpness of Corollaries \ref{cor:smoothweighted}, \ref{cor:roughweighted}}\label{sec:sharp}As customary in literature, the term \emph{sharpness} of a weighted estimate in the Muckenhoupt class $A_{q}$ say, refers below to whether   the functional dependence of the estimate on the weight characteristic $[w]_{A_q}$  is best possible.

With this language, estimate (i) is sharp up to the logarithmic term. It is conceivable that the appearance of such correction is related to whether   $L^2(w)$-bounds for $G^\Omega$ hold true for all $w\in A_1$, a question that remains open at the time of  writing. For $G^\Omega$,  the  leftmost estimate in (v) is sharp for $p\geq 4$, while estimate (iii) is sharp for all $2<p<\infty$.  Analogously, it is expected that the presence of the logarithmic correction in (ii) is necessary if $L^2(w)$ fails for $T^\Omega$. At the time of writing, we can only show that (ii) is sharp up to the logarithmic term. The leftmost estimate in (vi) is sharp for $p\geq 3$ and estimate (iv) is sharp for all $p>2$. The rightmost estimates in (v) and (vi) are sharp.

The above claims are verified as follows. The claimed sharpness for  strong-type $L^p(w)$-estimates ensues by combining the  main results of  \cite{LPR} with the fact that the unweighted $L^p$ bounds for $G^\Omega$ are $O(p^{\frac12})$, and the unweighted $L^p$-bounds for $T^\Omega$ are $O(p)$ as $p\to +\infty$. Similarly, in order to verify the sharpness of weak $L^p(w)$-estimates, interpolate any two such estimates for $p$ in the open range $(2,\infty)$ with $w\in A_1$ and use \cite{LPR} again.

}

\subsection{Past literature on weighted and sparse bounds for $T^\Omega, G^\Omega$}

{In \cite[Theorem 6.1]{RdF1985}, Rubio de Francia proved that $G^{\Omega}$, and hence $T^{\Omega}_{\operatorname{RF}}$, are bounded on $L^p(w)$ for $2<p<\infty$ and $w\in A_{\frac p2}$.   
 The $L^2(w)$-bounded\-ness for $w\in A_1$ of $T^\Omega_{\operatorname{RF}}$ and $G^\Omega$, conjectured in \cite[Section 6, p. 10]{RdF1985}, see also \cite[Section 8.2, pp. 186--187]{Duobook}, remains an open question at the time of writing. Corroborating this conjecture is that it does hold for  the particular case of congruent intervals \cite[Theorem A]{RdF1983}, as well as the partial result that  $T^\Omega_{\operatorname{RF}},G^\Omega$ are $ L^2(w) $-bounded  for $w(x)=|x|^{-\alpha}\in A_1 $, $ 0<\alpha <1 $. The latter was proved by Rubio de Francia in \cite{RdF1989}, and  a different argument  was later given by  Carbery in  \cite{CARB}*{pp. 81--93}. Weighted weak-type estimates at the endpoint $p=2$ were found in \cite[Theorem B (ii)]{Krol}, yielding the weak variant of Rubio de Francia's conjecture.}
 
 {Quantitative weighted strong (for $2<p<\infty$) and weak (at $p=2$)  estimates for $T^\Omega_{\operatorname{RF}}$ were recently obtained in \cite[Corollary 1.5 and Corollary 1.6]{GRS} as  consequence of a sparse form domination \cites{BFP,CDPO} of the bilinear form for the vector-valued  version of the Rubio de Francia square function $T^\Omega_{\operatorname{RF}}$, cf. \cite[Theorem 1.3]{GRS}.  
 In comparison with the arguments of the present paper, the sparse domination proof of \cite{GRS} relied on a combination of the stopping forms techniques of \cite{CDPO} with  deeper time-frequency tools, such as  vector-valued tree estimates and size decompositions \cite{Benea}, circumventing the usual passing through the smooth operator $G^{\Omega}$.
The pointwise sparse bound of Theorem \ref{t:sparsergh} is formally stronger than  the vector-valued sparse estimate of \cite{GRS}. Furthermore, forgoing the vector-valued formalism leads to a simpler argument devoid of vector-valued time-frequency analysis.
 
In \cite{GRS}, the quantification  of the behavior of $T^\Omega_{\operatorname{RF}}$  on $L^p(w)$    is sharp for $3\le p<\infty$. 
On the other hand, the quantitative weighted weak-type estimate at the endpoint $p=2$ was of order $  [w]_{A_{1}}^{\frac12}[w]_{A_{\infty}}^{\frac12}\log\left(\mathrm{e}+  [w]_{A_{\infty}} \right)$.}
{In the present paper, the weak-type $(2,2)$ bound of Proposition \ref{p:weight} (ii) improves by a $\left[\log\left(\mathrm{e}+  [w]_{A_{\infty}} \right)\right]^{\frac12}$ term in comparison to  \cite[Corollary 1.6]{GRS}, while the weak $(p,p)$ bound, $2<p<\infty$, is sharp. }

\subsection{The strong $L^2(w)$ inequality for the Walsh model}
The Rubio de Francia square function $T^\Omega$ has an immediate Walsh group analogue. For direct comparison with the trigonometric case, the same notation is kept for corresponding operations, to the extent possible. In stark contrast with the former, we have a proof of   $L^2(w)$-boundedness for the Walsh--Rubio de Francia square function. A precise statement is in Theorem \ref{TheoremWalshRdF} below.  Albeit Theorems \ref{t:sparsesmth} and \ref{t:sparsergh} continue to hold in the  Walsh setting, here a sharp endpoint is available, and  weighted extrapolation of the $L^2(w)$ results yields better quantified weighted $L^p(w)$-bounds for the Walsh--Rubio de Francia square function than those following from the corresponding sparse domination.

Here follow the definitions relevant to Theorem \ref{TheoremWalshRdF}.  Let $\omega=[k, m)$ be an interval with $k,m \in \mathbb N$. Define the Walsh projection operator by
\[
H_{\omega} f(x) \coloneqq \sum_{n=0}^{\infty} \ind_{\omega}(n) \langle f,W_n \rangle W_n(x),\qquad x\in \mathbb T,
\]
where $ \{ W_n : n \in \mathbb N \} $ are the  characters of the Walsh group on $\mathbb T =[0,1)$, see   \eqref{e:defwf}. For a collection $\omega \in \Omega$ of pairwise disjoint intervals in $\mathbb N$   the Walsh--Rubio de Francia square function is
\[
T^{\Omega}  f(x) \coloneqq \left( \sum_{ \omega \in \Omega } \Abs{ H_{\omega} f(x) }^2 \right)^{\frac{1}{2} },\qquad x\in\mathbb T.
\]
Due to the dyadic nature of the Walsh setting, it suffices to assume  dyadic $A_p$ conditions on the weight.   The corresponding dyadic constant  will be denoted by $A_{p,\mathcal D}$.
	
\begin{theorem}\label{TheoremWalshRdF} Let $ w \in A_1 $. Then,
\[
\Norm{ T^{\Omega} f }{ L^2(w) } \lesssim [w]_{A_{1
,\mathcal D
}}^{1/2} [w]_{A_{\infty}}^{1/2} \Norm{ f }{ L^2(w) }.
\]
{Furthermore, the sharp bound
\[
\Norm{ T^{\Omega} f }{ L^p(w) } \lesssim [w]_{A_{\frac p2
,\mathcal D
}} \Norm{ f }{ L^2(w) },\qquad 2<p<\infty,
\] holds
with implicit constants depending only on $p$. }

\end{theorem}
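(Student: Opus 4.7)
My plan is to pass through a dyadic sparse domination tailored to the Walsh model, and then to upgrade the $L^{2}(w)$ bound to $L^{p}(w)$ for $p > 2$ by sharp Rubio--de Francia extrapolation.

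First, I would adapt the proof of Theorem~\ref{t:sparsergh} to the Walsh setting, where the argument becomes substantially simpler: the Walsh projection $H_{\omega}$ commutes exactly with dyadic conditional expectations at suitable scales, in the sense that $H_{\omega} f$ is constant on every dyadic interval of length $2^{-J}$ whenever $\omega \subset [0, 2^{J})$. Local orthogonality thus holds as an identity rather than as an approximation, and the stopping construction underlying Theorem~\ref{t:sparsergh} now produces a pointwise bound $T^{\Omega} f(x) \lesssim G_{2, \mathcal{S}} f(x)$ for some sparse collection $\mathcal{S}$ of standard dyadic intervals in $\mathbb{T}$. The key Walsh-specific improvement is that the dominating object is the sparse square function $G_{2, \mathcal{S}}$ rather than the rougher sparse operator $T_{2, \mathcal{S}}$; it is this gain that enables the weighted estimate below.

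For the $L^{2}(w)$ bound, I would start with the identity
\[
\Norm{ G_{2, \mathcal{S}} f }{ L^{2}(w) }^{2} = \sum_{Q \in \mathcal{S}} \langle f \rangle_{2, Q}^{2} \, w(Q)
\]
and estimate the right-hand side via a sharp weighted dyadic Carleson embedding. The $A_{1, \mathcal{D}}$-condition, in the form $w(Q)/|Q| \leq [w]_{A_{1, \mathcal{D}}} \inf_{Q} w$, combines with the Hyt\"onen--P\'erez sharp reverse H\"older inequality and the sparseness of $\mathcal{S}$ to show that $\{w(Q)\}_{Q \in \mathcal{S}}$ is a $w$-Carleson sequence of intensity $\lesssim [w]_{A_{\infty}}$. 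Balancing the two pieces of information yields
\[
\sum_{Q \in \mathcal{S}} \langle f \rangle_{2, Q}^{2} \, w(Q) \lesssim [w]_{A_{1, \mathcal{D}}} \, [w]_{A_{\infty}} \, \Norm{ f }{ L^{2}(w) }^{2},
\]
from which the claimed estimate follows by taking square roots. For $p > 2$, I would then apply sharp limited-range Rubio--de Francia extrapolation with pivot $p_{0} = 2$ and upper range exponent $\infty$: using $[w]_{A_{\infty}} \leq [w]_{A_{1}}$, the $L^{2}(w)$ constant from the previous step is at most $[w]_{A_{1, \mathcal{D}}}$ for $w \in A_{1, \mathcal{D}}$, and the corresponding extrapolation output at $p > 2$ is of order $[w]_{A_{p/2, \mathcal{D}}}$, the Buckley exponent claimed in the theorem.

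The delicate step is the log-free Carleson embedding at $p = 2$. The corresponding estimates in the trigonometric setting, cf.\ Proposition~\ref{p:weight}(i), (ii), incur a $\sqrt{\log(\mathrm{e} + [w]_{A_{\infty}})}$ correction that ultimately stems from the Chang--Wilson--Wolff good-$\lambda$ argument invoked across shifted dyadic grids. In the Walsh model, because $\mathcal{S}$ is a sparse family of standard dyadic intervals rather than a shifted-dyadic family, the Carleson embedding closes cleanly against a single grid and the logarithmic correction is avoided.
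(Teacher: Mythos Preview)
Your approach has a fatal gap at the $L^2(w)$ step. The claimed inequality
\[
\sum_{Q\in\mathcal S}\langle f\rangle_{2,Q}^{2}\,w(Q)\;\lesssim\;[w]_{A_{1,\mathcal D}}\,[w]_{A_\infty}\,\|f\|_{L^2(w)}^{2}
\]
is false already for $w\equiv 1$. Take $\mathcal S=\{[0,2^{-k}):0\le k\le N\}$, which is $\tfrac12$-sparse, and $f=\cic{1}_{[0,2^{-N})}$. Then the left side equals $\sum_{k=0}^{N}2^{-N}=(N+1)2^{-N}$ while $\|f\|_2^2=2^{-N}$, so the ratio is $N+1$. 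In other words $\|G_{2,\mathcal S}\|_{L^2\to L^2}=\|T_{1,\mathcal S}\|_{L^1\to L^1}^{1/2}=\infty$; the exponent $p=2$ is the genuine endpoint for $G_{2,\mathcal S}$, and no Carleson embedding can save it. This is precisely why Proposition~\ref{p:weight}(i) is a \emph{weak}-type bound with a logarithmic correction rather than a strong-type bound. The ``balancing'' you allude to between the $A_1$ condition and the $w$-Carleson property of $\{w(Q)\}_{Q\in\mathcal S}$ does not close: after using $A_1$ to replace weighted by unweighted integrals you are left with $\sum_{Q\in\mathcal S}\int_Q|f|^2w$, and the overlap function $\sum_{Q}\cic{1}_Q$ is unbounded on a sparse family.

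The paper avoids sparse domination entirely at this endpoint. It first uses a Walsh Chang--Wilson--Wolff inequality (Lemma~\ref{lem:walshcww}) to pass from $T^\Omega$ to the wave packet square function $W^{\Omega^\star}$ at the cost of $[w]_{A_\infty}^{1/2}$, and then proves for $W^\Omega$ the Fefferman--Stein type inequality
\[
\int_{\mathbb T}|W^\Omega f|^{2}\,w \;\le\; \int_{\mathbb T}|f|^{2}\,\mathrm M_{\mathcal D}w
\]
for \emph{every} nonnegative $w$, by a layer-cake argument combined with exact local orthogonality of Walsh wave packets (Proposition~\ref{prop:walshwp}). This inequality is strictly stronger than any $A_1$ bound obtainable through $G_{2,\mathcal S}$, and it is what drives both the $p=2$ estimate and, via sharp extrapolation, the $[w]_{A_{p/2,\mathcal D}}$ bound for $p>2$. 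Your extrapolation step would in any case yield only $[w]_{A_{p/2,\mathcal D}}^{\max(1,\,2/(p-2))}$ from an $A_1$ input with exponent $1$, which is worse than the claimed bound on the range $2<p<4$.
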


\subsection{The strong $L^2(w)$ inequality for radially decreasing $A_1$ weights} Our final result extends the class of weights for which the $L^2(w)$-boundedness holds to even and radially decreasing $A_1$ weights in the form of the following theorem,  giving   new insight on the open question of the $L^2(w)$-bounded\-ness for $w\in A_1$ of the Rubio de Francia square function.  

\begin{theorem}\label{thm:radial} Let $w$ be an even and radially decreasing $A_1$ weight on the real line. There holds
\[
\|G^\Omega\|_{L^2(w)} \lesssim [w]_{A_1} \|f\|_{L^2(w)},\qquad \|T^\Omega \|_{L^2(w)}\lesssim [w]_{A_\infty} ^{\frac 12} [w]_{A_1} \|f\|_{L^2(w)}.
\]
\end{theorem}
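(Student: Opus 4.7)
The plan is to exploit the radial-decreasing structure of $w$ through a layer-cake representation, combined with the known Rubio de Francia--Carbery weighted estimate for power weights $|x|^{-\alpha}$, $\alpha \in (0,1)$. I will focus on the first inequality, for the smooth $G^\Omega$; the inequality for $T^\Omega$ is then a soft consequence.

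By an approximation argument I may assume $w$ is integrable and compactly supported, so that I can write
\[
w(x) = \int_0^\infty \cic{1}_{[-r,r]}(x) \, d\nu(r)
\]
for a finite positive measure $\nu$ on $[0,\infty)$. The radial $A_1$ condition translates into the quantitative tail constraint $\int_0^R r \, d\nu(r) \leq ([w]_{A_1}-1)R \cdot \nu([R,\infty))$, valid for all $R>0$, together with $w(x) = \nu([|x|,\infty))$. By Fubini,
\[
\|G^\Omega f\|_{L^2(w)}^2 = \int_0^\infty V(r) \, d\nu(r), \qquad V(r) \coloneqq \int_{-r}^r |G^\Omega f|^2\, dx.
\]

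The principal analytic input is the Rubio de Francia--Carbery estimate for power weights, quoted in the introduction, namely
\[
\int |G^\Omega f|^2 |x|^{-\alpha} dx \leq C(\alpha) \int |f|^2 |x|^{-\alpha} dx, \qquad \alpha \in (0,1),
\]
whose constant $C(\alpha)$ must be tracked quantitatively; the natural target is $C(\alpha)\lesssim (1-\alpha)^{-2}$, matching $[|x|^{-\alpha}]_{A_1}^2 \sim (1-\alpha)^{-2}$ in the power-weight special case of the theorem. Combining this with the trivial pointwise bound $\cic{1}_{[-r,r]}(x) \leq r^\alpha |x|^{-\alpha}$ yields the localized estimate
\[
V(r) \leq C(\alpha) r^\alpha \int |f|^2 |x|^{-\alpha} dx, \qquad \alpha \in (0,1).
\]
Plugging this into the layer-cake identity and exchanging the order of integration reduces the theorem to the pointwise inequality
\[
\int_0^\infty C(\alpha_r)(r/|x|)^{\alpha_r} \, d\nu(r) \lesssim [w]_{A_1}^2 \, \nu([|x|,\infty))
\]
for a scale-dependent choice of exponent $\alpha_r\in(0,1)$. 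For $r>|x|$ one takes $\alpha_r$ small so that both $C(\alpha_r)$ and $(r/|x|)^{\alpha_r}$ remain of order $1$, obtaining a contribution comparable to $\nu([|x|,\infty))=w(x)$; for $r\leq |x|$ the factor $(r/|x|)^{\alpha_r}$ is already $\le 1$, and the $A_1$ tail constraint on $\nu$ is used to absorb the residual growth, producing the $[w]_{A_1}^2$ loss.

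The second estimate for the rough $T^\Omega$ follows by composing the pointwise sparse bound $T^\Omega f \lesssim T_{2,\mathcal S}f$ of Theorem \ref{t:sparsergh} with the Chang--Wilson--Wolff-type good-$\lambda$ principle of Corollary \ref{c:goodl}, which upgrades a weighted square-function bound to a weighted sparse-sum bound at the cost of a factor $[w]_{A_\infty}^{1/2}$. This is the very mechanism by which Proposition \ref{p:weight}(ii) is derived from (i) earlier in the paper, so once the $G^\Omega$ bound is in place the $T^\Omega$ bound is immediate.

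The main obstacle is the delicate balancing of $\alpha_r$ against the $A_1$ tail condition on $\nu$ in the transitional regime $r\sim |x|$, and the need to track $C(\alpha)$ sharply as $\alpha \to 1^-$. A dyadic decomposition of the $\nu$-integral coupled with an Abel summation against the layer-cake distribution function should produce the stated linear dependence on $[w]_{A_1}$; losing a log or a power here would weaken the estimate without destroying the main qualitative content, but the quantitative $[w]_{A_1}$ exponent requires careful bookkeeping.
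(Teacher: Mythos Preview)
Your argument for $G^\Omega$ has a structural gap. After the Fubini swap you need
\[
\int_0^\infty C(\alpha_r)\,(r/|y|)^{\alpha_r}\,\mathrm{d}\nu(r) \;\lesssim\; [w]_{A_1}^2\, w(y)\qquad\text{for a.e. }y,
\]
with $\alpha_r$ chosen \emph{before} $y$ is revealed. For $r>|y|$ you propose to take $\alpha_r$ small so that $(r/|y|)^{\alpha_r}=O(1)$; this forces $\alpha_r\lesssim 1/\log(r/|y|)$, an illegal dependence on $y$. No choice survives the test $w\approx\cic{1}_{[-1,1]}$ (so $\nu\approx\delta_1$, $[w]_{A_1}\approx 1$): the left side equals $C(\alpha_1)|y|^{-\alpha_1}$, which for any $\alpha_1\in(0,1)$ blows up as $|y|\to 0$ while $w(y)=1$, and is positive for $|y|>1$ where $w(y)=0$. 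The range $r\le |y|$ is equally problematic: the bound $(r/|y|)^{\alpha_r}\le 1$ leaves you with $\nu((0,|y|])$, which is infinite for power weights. The root cause is that the pointwise majorization $\cic{1}_{[-r,r]}\le (r/|y|)^\alpha$ discards all spatial localization; once the global power-weight inequality is invoked there is no way to recover it.

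Your deduction of the $T^\Omega$ bound is also flawed. Theorem~\ref{t:sparsergh} together with Corollary~\ref{c:goodl} reduces $\|T^\Omega\|_{L^2(w)}$ to a strong $L^2(w)\to L^2(w)$ bound for $G_{2,\mathcal S}$, not for $G^\Omega$; the inequality $G^\Omega\lesssim G_{2,\mathcal S}$ of Theorem~\ref{t:sparsesmth} points the wrong way. And the strong $L^2(w)$ bound for $G_{2,\mathcal S}$ is false already for $w\equiv 1$: with $\mathcal S=\{[0,2^{-n}]:n\ge 0\}$ one has $\|G_{2,\mathcal S}f\|_2^2=\int_0^1|f|^2\sum_n\cic{1}_{[0,2^{-n}]}\sim\int_0^1|f(y)|^2\log(1/y)\,\mathrm dy$. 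The mechanism you cite from Proposition~\ref{p:weight} operates at the \emph{weak} endpoint, where this obstruction is absent. The correct passage is the direct Chang--Wilson--Wolff comparison $\|T^\Omega\|_{L^2(w)}\lesssim[w]_{A_\infty}^{1/2}\|G^\Omega\|_{L^2(w)}$, which is Lemma~\ref{lem:CWW} in the paper.

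For contrast, the paper's proof of the $G^\Omega$ bound avoids the power-weight black box entirely. It layer-cakes the averages $\langle w\rangle_I$ rather than $w$, reducing the $L^2(w)$ bound for the wave packet square function $W^\Omega$ to the pointwise sufficient condition $\int_0^\infty\sum_{R^\ast\in\mathcal R_\lambda^\ast}\chi_{R^\ast}^9(x)\,\mathrm d\lambda\lesssim w(x)$, where $\mathcal R_\lambda^\ast$ is any collection to which $\{I:\langle w\rangle_I>\lambda\}$ is subordinate. The analytic input is the local orthogonality Lemma~\ref{l:orthol}, which produces a rapidly decaying majorant $\chi_{R^\ast}^9$ in place of the slowly decaying $|y|^{-\alpha}$. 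For even radially decreasing $A_1$ weights every such level set is subordinate to a single symmetric interval $[-b_\lambda,b_\lambda]$, and the sufficient condition is then verified by an elementary doubling computation.
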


The proof of Theorem \ref{thm:radial} combines local orthogonality with a stopping time argument and is presented in Section \ref{SSufficient}.  Our argument  actually yields the  conclusions of Theorem \ref{thm:radial} under the more general, albeit more technical, assumption \eqref{EqSufficient}. The latter is in general a strengthening of the $A_1$ condition, but is equivalent to $A_1$ for even, radially decreasing weights.

\subsection{Notation and generalities}\label{sec:not} 
We shall write $X\lesssim Y$ to indicate that $X\le CY$ with a positive constant $C$ independent of significant quantities and we denote $X\simeq Y$ when simultaneously $X\lesssim Y$ and $Y\lesssim X$.

The Fourier transform  obeys the normalization
\[
\widehat{f} (\xi) =\frac{1}{\sqrt{2\pi}} \int_{\R} f(x)  \mathrm{e}^{-ix\xi}\,\mathrm{d}x, \qquad \xi \in \R.
\]
Throughout the article, for $I\subset \R$ being any interval,  denote by
\[
\chi_I(x) \coloneqq  \left[1+ {\textstyle\left(\frac{|x-c_I|}{\ell_I}\right)^2 }\right]^{-1}, \qquad x\in \R,
\]
with $c_I$ and $\ell_I$ being respectively, the center and length of $I$.
For positive localized averages and for their tailed counterpart, write
\[
\langle f \rangle_{p,I}\coloneqq  {|I|^{-\frac1p}}{ \| f\cic{1}_I\|_p}, \qquad \langle f \rangle_{p,I,\dagger}\coloneqq {|I|^{-\frac1p}}{ \left\| f\chi_I^{9}\right\|_p} , \qquad 0<p<\infty.
\]
 When $p=1$,  the subscript is omitted,  simply writing $\langle f \rangle_{I}$ and  $\langle f \rangle_{I,\dagger}$ instead.
The chosen 18-th order decay  is not a relevant feature.

\subsubsection*{Sparse collections}A collection of intervals $\mathcal{S}$ is called \emph{$\eta$-sparse}  if for every  $I \in \mathcal{S} $ there exists a subset $ E_{I} \subseteq I $ such that
\[
\abs{ E_{I} } \geq \eta \abs{ I }
\]
and the collection of sets $ \left\{ E_{I} : I \in \mathcal{S} \right\} $ is pairwise disjoint.  In this article, the exact value of $\eta$ may vary at each occurrence, although there is an absolute constant $\eta_0>0$ which bounds from below each occurrence of $\eta$. In accordance,   $\eta$ is omitted when referring to \emph{$\eta$-sparse} collections.
\subsubsection*{Dyadic grids}
The standard system of shifted dyadic grids on $\mathbb R$, see e.g.\ \cite{LN}, is 
\[
\mathcal D^{j}=\left\{2^{-n}\left[ \textstyle k+\frac{(-1)^{n} j }{3}  ,\textstyle   k+1+\frac{(-1)^{n} j}{3}\right): k, n \in \mathbb Z\right\}, \qquad j=0,1,2.
\] 
The superscript $j$ in $\mathcal D^j$ is omitted whenever fixed and clear from context. If $I$ is an interval, write $\mathcal D(I)=\{J \in \mathcal D: J \subseteq I\}$. For $k\geq 0$, $j \in \mathbb Z$ and $Q\in \mathcal D$,  denote by $Q^{(k)}\in \mathcal D$ the $k$-th dyadic parent of $Q$  and define $Q^{(k,j) }\coloneqq Q^{(k)} + j \ell_{Q^{(k) }}$, which also belongs to $\mathcal D$. 
To each $Q\in \mathcal D$, associate an instance of the decomposition 
\begin{equation}\label{eq:decomp} 
\mathcal D =  \left( \bigcup_{{|j|\leq 1}} \mathcal D(Q^{(0,j)}) \right) \cup \left( \bigcup_{\substack{ k\geq 1\\ |j|\leq 1 }} \{Q^{(k,j)}\} \right)  \cup
\left( \bigcup_{\substack{ k\geq 0\\ 2\leq|j|\leq 3 }} \mathcal D(Q^{(k,j)}) \right).
\end{equation}
Equality \eqref{eq:decomp} will be used in connection with tails estimates. It can be easily obtained as a consequence of the dyadic covering
\[
5J^{(1)} \setminus 5J = J^{(1,-2)} \cup J^{(1,2)} \cup J^{(0,3\sigma )}, \qquad 
\]
holding for each $J\in \mathcal D$, with $\sigma=1$ if $J$ is a left child of  $J^{(1)}$, and $\sigma=-1$ otherwise.
 Indeed, let $Q\in \mathcal D$ and $\mathcal J$ be the maximal elements of $\mathcal D$ contained in $\mathbb R\setminus 5Q$. The elements of $\mathcal J$ partition $\mathbb R\setminus 5Q$ and one has the disjoint union
 \[
 \mathcal J =\bigcup_{k=0}^\infty \mathcal J_k, \qquad \mathcal J_k\coloneqq\left\{J \in \mathcal D: J\cap 5Q^{(k,0)}=\varnothing, J \subset5Q^{(k+1,0)}\right\}
 \]
Notice that $\mathcal J_k$ is a partition of $5Q^{(k+1,0)}\setminus 5Q^{(k,0)} $. The maximality of $\mathcal J$ and the initial observation forces the equality
\[
\mathcal J_k=\left\{ Q^{(k+1,-2)}, \, Q^{(k+1,2)},\, Q^{(k,3\sigma )}\right\}
\]
for some $\sigma\in\{-1,1\}$.
  Therefore, for  a generic $I\in \mathcal D$, there are the following possibilities.
\begin{itemize}
\item[1.] $I\subset 5Q$. Then $I\in \mathcal D(Q^{(0,j)})$ for some $|j|\leq 2$.
\item[2.] $I\cap 5Q\neq \varnothing $, $I\not\subset 5Q$. Then $I = Q^{(k,j)} $ for some $|j|\leq 2$ and $k\geq 1$.
\item[3.] $I \subset \R\setminus 5Q$. Then $I\subset J$ for some $J\in \mathcal J_k$ and $k\geq 0$, whence $I \subset Q^{(k,j)}  $ for some $k\geq 0$ and $2\leq |j|\leq 3$. 
\end{itemize}  
From here, equality \eqref{eq:decomp} is deduced.
If $Q\in \mathcal D$ is a dyadic cube, it is convenient to introduce the non-dyadic dilates of $Q$
\begin{equation}	
\label{e:ndd}
\widetilde{Q^{(k)}}\coloneqq \bigcup_{|j|\leq 2} Q^{(k,j)}, \quad k \geq 0, \qquad \mathcal R(Q)\coloneqq \{\widetilde{Q^{(k)}}:\, k\geq 0\}.
\end{equation}
{Note that $5Q^{(k)}= \widetilde{Q^{(k)}}$ and} that $\mathcal R(Q)$ is a sparse collection, two facts used in several occasions below.

A general principle is that the operators  associated to a sparse collection $\mathcal S$ may be pointwise estimated by a finite sum of operators associated to sparse collections coming from dyadic grids. 
 More precisely, the three-grid lemma \cite[Theorem 3.1]{LN} may be easily used to deduce that for each sparse collection $\mathcal S$ there exist sparse collections $\mathcal S^j\subset \mathcal D^j,$ $j=0,1,2$ such that, cf.~\eqref{e:sparsestuf},
\begin{equation}
\label{e:gotodyad}
T_{p,\mathcal S} f\lesssim \sum_{j=0,1,2} T_{p,{\mathcal S}^j} f \qquad  G_{p,\mathcal S}f\lesssim  \sum_{j=0,1,2} G_{p,{\mathcal S}^j} f ,
\end{equation} pointwise,
with   implicit constants depending on $p$ only. Any quasi-Banach function space operator norm estimate for operators \eqref{e:sparsestuf} may be thus reduced to the case where $\mathcal S$ is a subset of a dyadic grid $\mathcal D$. 
\subsubsection*{Weight characteristics} A \emph{weight} $w$ on $\R$ is a positive, locally integrable function. For $1\leq p\leq \infty$, the $A_p$ characteristic of $w$ is defined by
\[
[w]_{A_p} \coloneqq \begin{cases} \vspace{.8em}
\displaystyle \sup_{I  } \langle w \rangle_{1,I} \left( \inf_I w\right)^{-1}, & p=1,
\\ \vspace{.6em}
\displaystyle \sup_{I } \langle w \rangle_{1,I} \langle w^{-1}\rangle_{\frac{1}{p-1},I}, & 1<p<\infty,
\\\vspace{.6em}
 \displaystyle \sup_{I} \langle \mathrm{M} (w \cic{1}_I) \rangle_{1,I} \langle w   \rangle_{1,I}  ^{-1},& p=\infty,
\end{cases}
\]
where the suprema are being taken over all intervals $I\subset \R$ and $\mathrm{M}$ is the Hardy--Littlewood maximal function.
Note that our definition of $A_\infty$ coincides with that of Wilson, see e.g.\ \cites{FreyNieraeth,HP2013,WilsonBook}, and that
\[
[w]_{A_\infty}\lesssim [w]_{A_p} \leq [w]_{A_q}, \qquad 1\leq  q<p<\infty,
\]
with absolute implicit constant, see \cite{HP2013}.
  The  formal definition of the dyadic $A_p$ characteristic $[w]_{A_p,\mathcal D}$ is the same as the usual $A_p$ constant, with the supremum therein   being replaced by the supremum over all   intervals in $\mathcal D^0(\mathbb T)$, where $\mathbb T=[0,1)$.
\subsection*{Structure of the paper}

Section \ref{s:cww} introduces the sparse operators \eqref{e:sparsestuf} as special cases of balayages of Carleson sequences and contains two relevant results: a weighted exponential good-$\lambda$ inequality for balayages and a pointwise domination of balayages by a sparse operator. Sections \ref{sub:wave} and  \ref{sec:proofB} are devoted to the proofs of Theorem  \ref{t:sparsesmth} and Theorem \ref{t:sparsergh}, respectively. Theorem \ref{TheoremWalshRdF} is shown in Section \ref{SWalsh}, and  Theorem \ref{thm:radial} is proven in Section \ref{SSufficient}.

\subsection*{Acknowledgments} The authors would like to thank the anonymous referees for an expert
reading and for providing several helpful and detailed comments. The development of this article began during F.\ Di Plinio's visit to the Basque Center of Applied Mathematics (BCAM) of Bilbao, Spain. The author gratefully acknowledges BCAM's warm hospitality. The authors would also like to thank Dariusz Kosz for many helpful suggestions on the first manuscript of this paper.
\section{Balayages of Carleson sequences} \label{s:cww} Let $\mathcal D$ be a fixed dyadic grid, $\mathsf{a}=\{a_Q:Q\in \mathcal D\}$ be any sequence of complex numbers. If $\mathcal E\subset \mathcal D$, the  $\mathcal E$-\emph{balayage}  of $\mathsf{a}$ is defined by 

\begin{equation}
\label{e:balaspars}
A_{\mathcal E}[\mathsf{a}]\coloneqq \sum_{Q\in \mathcal E} |a_Q| \cic{1}_Q.   
\end{equation} 

\begin{remark} \label{r:bala} Sparse operators are special cases of \eqref{e:balaspars}. Indeed,
if $0<p<\infty$ and  $f\in L^p_{\mathrm{loc}}(\R)$,    
\begin{equation}
\label{e:sparsestuf2}
T_{p,\mathcal S}f = A_{\mathcal S} \left[\left\{\langle f\rangle_{p,Q}:Q\in \mathcal D\right\}\right], \qquad   G_{p,\mathcal S}f = \sqrt{ A_{\mathcal S}  \left[\left\{\langle f\rangle_{p,Q}^2:Q\in \mathcal D\right\}\right]}. \end{equation} 
\end{remark}

\subsection{An exponential good-$\lambda$ inequality for sparse balayages}
The next theorem is an exponential good-$\lambda$ inequality for balayages supported on sparse collections. Its corollary has been  used in the deduction of estimates (ii), (iv) of Proposition \ref{p:weight} respectively from (i), (iii) of the same proposition. For ease of notation, given a complex sequence 
$\mathsf{a}=\{a_Q:Q\in \mathcal D\}$, indicate by $ \mathsf{a}^2\coloneqq \{a_Q^2:Q\in \mathcal D\}$.

\begin{theorem} \label{sparseCWW} Let $w\in A_{\infty}$. There exist absolute constants $C,\delta>0$ such that the following holds. Let $\mathcal S\subset \mathcal D$ be a sparse collection and $\mathsf{a}=\{a_Q:Q\in \mathcal D\}$ be any  sequence. Then for all $\lambda,\gamma>0$,
\[
w\left(\left\{  A_{\mathcal S} [\mathsf{a}]>2\lambda, \,\sqrt{ A_{\mathcal S} [\mathsf{a}^2]}\leq \gamma \lambda \right\}\right) \leq C\exp\left(- \frac{\delta\gamma^2}{[w]_{A_\infty}} \right) w\left(\left\{  A_{\mathcal S} [\mathsf{a}]>\lambda \right\}\right).
\] 
\end{theorem}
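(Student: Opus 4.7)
The plan is to run a classical good-$\lambda$ stopping scheme: reduce the weighted estimate to an unweighted Lebesgue measure bound on a distinguished partition of $\{A_\mathcal{S}[\mathsf{a}]>\lambda\}$, and then convert back to $w$-measure via the quantitative $A_\infty$ reverse-H\"older inequality.

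After reducing to a finite sparse subcollection by monotone convergence, I first identify the principal cubes $\mathcal{T}_\lambda\subseteq\mathcal{S}$ consisting of the inclusion-maximal $T$'s for which the ancestor tail $\sum_{Q\in\mathcal{S},\,Q\supseteq T}|a_Q|$ exceeds $\lambda$. This tail depends only on the chain of $\mathcal{S}$-ancestors of $T$; it is therefore constant on $T$, the elements of $\mathcal{T}_\lambda$ are pairwise disjoint, and $\{A_\mathcal{S}[\mathsf{a}]>\lambda\}=\bigsqcup_{T\in\mathcal{T}_\lambda}T$. On each such $T$ I split $A_\mathcal{S}[\mathsf{a}]=s_T+A^-_T[\mathsf{a}]$, with $s_T$ the constant ancestor tail and $A^-_T[\mathsf{a}](x)=\sum_{Q\in\mathcal{S},\,Q\subsetneq T,\,Q\ni x}|a_Q|$. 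Maximality gives $s_T-|a_T|\leq\lambda$, and on the event $E$ the constraint $\sqrt{A_\mathcal{S}[\mathsf{a}^2]}\leq\gamma\lambda$ forces $|a_T|\leq\gamma\lambda$. Consequently $s_T\leq(1+\gamma)\lambda$ and
\[
E\cap T\ \subseteq\ \bigl\{x\in T:\,A^-_T[\mathsf{a}](x)>(1-\gamma)\lambda,\ \sqrt{A^-_T[\mathsf{a}^2](x)}\leq\gamma\lambda\bigr\}.
\]

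The crucial unweighted bound comes from Cauchy--Schwarz combined with a sparse nesting count. Let $N_T(x)=\#\{Q\in\mathcal{S}:\,Q\subsetneq T,\,Q\ni x\}$: on the set above,
\[
(1-\gamma)^2\lambda^2<A^-_T[\mathsf{a}](x)^2\leq N_T(x)\cdot A^-_T[\mathsf{a}^2](x)\leq N_T(x)\gamma^2\lambda^2,
\]
so $N_T(x)>((1-\gamma)/\gamma)^2$. A direct induction on the $\mathcal{S}$-descendant tree of $T$, exploiting at each generation that the total Lebesgue measure of the maximal $\mathcal{S}$-subcubes of a given cube $T'$ is at most $(1-\eta)|T'|$, yields the geometric bound $|\{x\in T:\,N_T(x)\geq N\}|\leq(1-\eta)^{N-1}|T|$, and therefore $|E\cap T|\leq C\exp(-c/\gamma^2)|T|$.

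Finally, I invoke the standard quantitative $A_\infty$ reverse-H\"older estimate $w(F)/w(T)\lesssim(|F|/|T|)^{1/(C[w]_{A_\infty})}$, valid for every $F\subseteq T$, applied with $F=E\cap T$; this produces $w(E\cap T)\lesssim\exp\bigl(-\delta/(\gamma^2[w]_{A_\infty})\bigr)\,w(T)$. Summing over the pairwise disjoint collection $\mathcal{T}_\lambda$ yields the claimed exponential good-$\lambda$ inequality. The main care is in setting up the principal-cube framework so that it faithfully captures $\{A_\mathcal{S}[\mathsf{a}]>\lambda\}$ and so that the bound $s_T\leq(1+\gamma)\lambda$ on the relevant portion of $T$ is correctly extracted; the remaining Cauchy--Schwarz, sparse nesting induction, and $A_\infty$ extrapolation steps are routine.
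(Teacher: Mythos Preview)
Your proof is correct and follows essentially the same route as the paper: decompose $\{A_{\mathcal S}[\mathsf a]>\lambda\}$ into maximal pieces, use maximality together with Cauchy--Schwarz to land in a superlevel set of the sparse counting function $\sum_{Z}\cic{1}_Z$, and then invoke its exponential integrability. The paper's version is marginally cleaner in two places: it takes the maximal \emph{dyadic} cubes $R\subset E_\lambda$, so that $R^{(1)}\not\subset E_\lambda$ yields $\sum_{Z\in\mathcal S,\,Z\supseteq R^{(1)}}|a_Z|\le\lambda$ directly (avoiding your extra step $|a_T|\le\gamma\lambda$ and the factor $(1-\gamma)$), and it appeals to the weighted John--Nirenberg inequality for $\sum_{Z\in\mathcal S(R)}\cic{1}_Z$ in one stroke rather than going through the Lebesgue estimate and the $A_\infty$ power bound.

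One technical slip worth flagging: your assertion that the maximal $\mathcal S$-subcubes of $T'\in\mathcal S$ have total measure at most $(1-\eta)|T'|$ does not follow from $\eta$-sparseness as defined in this paper. Pairwise disjointness of the sets $E_Q$ only gives $\sum_i|Q_i|\le\frac{1-\eta}{\eta}|T'|$, which is $<|T'|$ only when $\eta>\tfrac12$. The exponential decay $|\{x\in T:N_T(x)\ge N\}|\le C\rho^N|T|$ that you actually need is nonetheless true for any $\eta>0$: it follows from the equivalent Carleson packing condition $\sum_{Q\subset T}|Q|\le\eta^{-1}|T|$ and a standard John--Nirenberg-type iteration (or simply by observing that $\sum_{Z\in\mathcal S(T)}\cic{1}_Z$ has dyadic BMO norm $\lesssim\eta^{-1}$). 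With this correction your argument goes through unchanged.
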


\begin{corollary} \label{c:goodl} Let $0<q,s<\infty, 0< r,t\leq \infty$.  Then 
\[
\sup \left\| T_{2,\mathcal S}: L^{q,r}(w) \to L^{s,t}(w)  \right\| \lesssim [w]_{A_\infty}^{\frac12}
\sup \left\| G_{2,\mathcal S}: L^{q,r}(w) \to L^{s,t}(w)  \right\| 
\]
with the supremum taken over all not necessarily dyadic sparse collections $\mathcal S$, and  implied constant depending on $q,r,s,t$ only.
\end{corollary}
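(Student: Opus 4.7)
The plan is a standard good-$\lambda$ distribution function argument. First, by the pointwise reduction \eqref{e:gotodyad}, it suffices to bound $\|T_{2,\mathcal S}\|_{L^{q,r}(w)\to L^{s,t}(w)}$ uniformly over dyadic sparse collections $\mathcal S\subset\mathcal D$, at the cost of an absolute constant. Fix such an $\mathcal S$ and $f\in L^{q,r}(w)$, and set $\mathsf a=\{\langle f\rangle_{2,Q}:Q\in\mathcal D\}$; by Remark~\ref{r:bala} one has $T_{2,\mathcal S}f=A_{\mathcal S}[\mathsf a]$ and $G_{2,\mathcal S}f=\sqrt{A_{\mathcal S}[\mathsf a^{2}]}$. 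The elementary inclusion
\[
\{T_{2,\mathcal S}f>2\lambda\}\subset \{T_{2,\mathcal S}f>2\lambda,\ G_{2,\mathcal S}f\leq\gamma\lambda\}\cup \{G_{2,\mathcal S}f>\gamma\lambda\},
\]
combined with Theorem~\ref{sparseCWW}, produces the distributional inequality
\[
w(\{T_{2,\mathcal S}f>2\lambda\})\leq \Phi(\gamma)\,w(\{T_{2,\mathcal S}f>\lambda\})+w(\{G_{2,\mathcal S}f>\gamma\lambda\}),\qquad \lambda,\gamma>0,
\]
with $\Phi(\gamma)$ denoting the exponential factor supplied by Theorem~\ref{sparseCWW}.

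Next, I would integrate this distributional bound against the Lorentz quasi-norm via the standard equivalence
\[
\|F\|_{L^{s,t}(w)}^{t}\simeq_{s,t}\int_{0}^{\infty}\lambda^{t-1}w(\{|F|>\lambda\})^{t/s}\,\mathrm d\lambda, \qquad 0<t<\infty,
\]
with the natural $t=\infty$ analogue. A change of variable $\lambda\mapsto 2\lambda$, the quasi-subadditivity $(a+b)^{t/s}\leq c_{t/s}(a^{t/s}+b^{t/s})$, and a second change of variable $\mu=\gamma\lambda$ in the term involving $G$ lead to
\[
\|T_{2,\mathcal S}f\|_{L^{s,t}(w)}^{t}\lesssim_{s,t}\Phi(\gamma)^{t/s}\,\|T_{2,\mathcal S}f\|_{L^{s,t}(w)}^{t}+\gamma^{-t}\,\|G_{2,\mathcal S}f\|_{L^{s,t}(w)}^{t}.
\]

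I would then calibrate $\gamma=\gamma_{\star}$ as a function of $[w]_{A_\infty}$ and $s,t$ so that $c_{t/s}\Phi(\gamma_{\star})^{t/s}\leq 1/2$; by the exponential form of $\Phi$ in Theorem~\ref{sparseCWW}, this calibration produces $\gamma_{\star}^{-1}\simeq_{s,t}[w]_{A_\infty}^{1/2}$. Absorbing the first term on the right hand side to the left, taking $t$-th roots, and applying the hypothesised $L^{q,r}(w)\to L^{s,t}(w)$ bound for $G_{2,\mathcal S}$ on the resulting right hand side, one obtains
\[
\|T_{2,\mathcal S}f\|_{L^{s,t}(w)}\lesssim_{s,t}[w]_{A_\infty}^{1/2}\,\|G_{2,\mathcal S}f\|_{L^{s,t}(w)},
\]
uniformly in the dyadic sparse $\mathcal S$; passing to the supremum and invoking the reduction of the first step yields the corollary.

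The main technical points I anticipate concern the edge ranges of the Lorentz exponent: when $t/s<1$, the quasi-subadditivity must be used in its true subadditive form with unit constant, while when $t=\infty$, a direct level-set argument replaces the dyadic integral. Both of these are standard. One must additionally ensure that the calibration $\gamma_{\star}$, and hence the final implicit constant, depends only on $s,t$ and the absolute constants of Theorem~\ref{sparseCWW}, and is in particular independent of $\mathcal S$, $f$, $q$, and $r$; this is immediate from the form of the distributional inequality.
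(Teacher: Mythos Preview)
Your proposal is correct and is precisely the standard good-$\lambda$ argument the paper invokes; the paper's own proof of this corollary is the single sentence that it ``follows from the theorem by standard good-$\lambda$ method.'' The only routine point left implicit in your write-up is the a priori finiteness of $\|T_{2,\mathcal S}f\|_{L^{s,t}(w)}$ needed for the absorption step, handled as usual by first restricting to finite $\mathcal S$ and suitably nice $f$.
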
 

\begin{proof}[Proof of Theorem \ref{sparseCWW} and Corollary \ref{c:goodl}] First, in view of  Remark \ref{r:bala}, Corollary \ref{c:goodl} follows from the theorem by standard good-$\lambda$ method.
To prove the theorem, by   monotone convergence, it suffices to prove the claim for finite  sparse collections $\mathcal S$ as long as the estimate obtained is uniform in $\#\mathcal S$. Denote by $\mathcal S(Q)=\{Z\in \mathcal S: Z\subseteq Q\}$ for each $Q\in \mathcal D$. 
Also denote by $F_\lambda$ and $E_\lambda$  the sets appearing respectively in the left and right hand side of the conclusion of the theorem. Under our qualitative assumptions the set $E_\lambda$ is a finite union of intervals of $\mathcal D$, whence 
$
E_\lambda=\bigcup\{R: R\in \mathcal R\}
$
and  $\mathcal R$ is the collection of  those elements of $\mathcal D$ contained in $E_\lambda$ and maximal with respect to inclusion. Pairwise disjointness of the collection $\mathcal R$ thus reduces our claim to proving
\begin{equation}
\label{e:mainspCWW}
w(F_\lambda \cap R) \leq C\exp\left(- \frac{\delta\gamma^2}{[w]_{A_\infty}} \right) w\left(R\right) , \qquad R\in \mathcal R.
\end{equation}
If $x\in F_\lambda \cap R $, then 
\[
\begin{split}
2\lambda  &<
 A_{\mathcal S} [\mathsf{a}](x) =  A_{\mathcal S(R)}  [\mathsf{a}](x) +\sum_{\substack{Z\in \mathcal S\\ Z\supseteq R^{(1)}} }  |a_Z|  
 \leq A_{ \mathcal S(R)} [\mathsf{a}](x) +\lambda  \\ & \leq \sqrt{A_{\mathcal S(R)} [\mathsf{a}^2](x) }\left( \sum_{Z\in \mathcal S(R)} 1_Z\right)^{\frac12} + \lambda \leq   \gamma \lambda  \left( \sum_{Z\in \mathcal S(R)} 1_Z\right)^{\frac12} +\lambda,
\end{split}
\]
For the second inequality on the first line we have used that $  R^{(1)}\not\subset E_\lambda$ due to maximality of $R$.
Therefore
\[
F_\lambda \cap R \subset \left\{x\in R:   \sum_{Z\in \mathcal S(R)} 1_Z>\gamma^{-2}\right\}
\]
and \eqref{e:mainspCWW} follows from the weighted John--Nirenberg inequality. The proof of the theorem is thus complete.
\end{proof}

\subsection{Subordinated Carleson sequences} 
Let $f\in L^1(\R^d)$ be a fixed function. 
Say that the sequence  $\mathsf{a}=\left\{a_I: I\in \mathcal D\right\} $ is a  \emph{Carleson sequence subordinated to $f$} 
if
\begin{equation}
\label{e:packing}\frac{1 }{|I|}
\sum_{J\in \mathcal D(I) } |J||a_J| \leq C {\langle f\rangle_{I,\dagger}}
\end{equation}
uniformly over all $I\in \mathcal D$. The least constant $C$ such that \eqref{e:packing} holds is denoted by $\|\mathsf{a}\|_{\mathcal D}$ and termed \emph{the Carleson norm} of $\mathsf{a}$. The next proposition shows that balayages of Carleson sequences subordinated to $f$ are dominated by $1$-average sparse operators applied to $f$. \begin{proposition}  \label{p:balayage}There exists an absolute constant $C$ such that the following holds. For each $f\in L^1(\R) $ with compact support  there exists a sparse collection $\mathcal S$ of intervals with the property that for all  Carleson sequences  $\mathsf a$ subordinated to $f$, there holds
\begin{equation}\label{e:spros2}
A_{\mathcal D}[\mathsf{a}] \leq C\|\mathsf{a}\|_{\mathcal D} T_{1,\mathcal S} f ,
\end{equation}
pointwise almost everywhere.
\end{proposition}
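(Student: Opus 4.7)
The plan is to apply Lerner's pointwise sparse formula to the balayage $g:=A_{\mathcal D}[\mathsf a]$, translate the resulting oscillation bounds into tailed averages of $f$ via the Carleson subordination, and then pass from tailed to untailed averages by enlarging the sparse collection using the non-dyadic dilates \eqref{e:ndd} coming from the decomposition \eqref{eq:decomp}.

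The key observation is that on any $R\in\mathcal D$ the balayage splits as $g=c_R+g^R$, where $c_R:=\sum_{Q\supseteq R}|a_Q|$ is constant on $R$ and $g^R:=\sum_{Q\subsetneq R}|a_Q|\cic 1_Q$ is the ``below-$R$'' part. Choosing $c=c_R$ in the infimum and invoking the Carleson subordination supply the oscillation estimate
\[
\inf_{c\in\R}\langle|g-c|\rangle_R\leq \langle g^R\rangle_R=\frac{1}{|R|}\sum_{Q\subsetneq R}|a_Q||Q|\leq \|\mathsf a\|_{\mathcal D}\langle f\rangle_{R,\dagger}.
\]
Invoking Lerner's pointwise sparse formula for $g$ then yields a sparse $\mathcal S^\star\subset\mathcal D$ with
\[
A_{\mathcal D}[\mathsf a]\lesssim\|\mathsf a\|_{\mathcal D}\sum_{R\in\mathcal S^\star}\langle f\rangle_{R,\dagger}\cic 1_R
\]
pointwise almost everywhere, producing an intermediate sparse bound by tailed averages.

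The final step replaces $\langle f\rangle_{R,\dagger}$ by a sum of untailed averages over the dilates in $\mathcal R(R)$. The quadratic decay of $\chi_R^9$ combined with \eqref{eq:decomp} yields the pointwise inequality $\langle f\rangle_{R,\dagger}\lesssim\sum_{k\geq 0}2^{-ck}\langle f\rangle_{\widetilde{R^{(k)}}}$ for some absolute $c>0$. Substituting and using $\cic 1_R\leq \cic 1_{\widetilde{R^{(k)}}}$ gives a double-indexed bound over pairs $(R,k)$ with $R\in\mathcal S^\star$ and $k\geq 0$. The main obstacle is reorganising this family into a single sparse collection: each $\mathcal R(R)=\{\widetilde{R^{(k)}}\}_k$ is itself sparse, and by the three-grid lemma each $\widetilde{R^{(k)}}$ is comparable to a dyadic cube in one of the shifted grids $\mathcal D^j$, so \eqref{e:gotodyad} reduces the task to organising finitely many sparse dyadic families. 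The geometric factor $2^{-ck}$ furnishes the summability needed to absorb the superposition of $\{\mathcal R(R):R\in\mathcal S^\star\}$ into a single sparse $\mathcal S$, yielding the desired pointwise bound $A_{\mathcal D}[\mathsf a]\leq C\|\mathsf a\|_{\mathcal D}T_{1,\mathcal S}f$.
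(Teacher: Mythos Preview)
Your strategy coincides with the paper's: the John--Nirenberg iteration in \S\ref{ss:main2} is precisely the proof of Lerner's local mean oscillation formula specialised to the balayage, and both arguments conclude by upgrading the tailed averages $\langle f\rangle_{R,\dagger}$ to sharp ones. You black-box Lerner where the paper re-derives it.

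There are, however, two genuine gaps in your execution. First, Lerner's formula is a \emph{local} statement on a fixed cube $Q_0$, controlling $|g-m_{Q_0}(g)|$; you do not explain how to obtain a global pointwise bound for $g=A_{\mathcal D}[\mathsf a]$ on all of $\R$. The paper handles this by fixing $Q\supset\supp f$ and, via \eqref{eq:decomp}, splitting $A_{\mathcal D}$ into a main term over $\mathcal D(3Q)$ together with large-scale and far-away contributions; the latter are estimated directly in \S\ref{sub:tails} using the Carleson condition and the decay of $\langle f\rangle_{I,\dagger}$ for $I$ far from $\supp f$. Your write-up omits this tail analysis entirely. One could alternatively verify that $g(x)\to 0$ as $|x|\to\infty$ and invoke a global version of the decomposition, but some such argument must be supplied.

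Second, your final paragraph does not establish that the doubly indexed family $\{\widetilde{R^{(k)}}:R\in\mathcal S^\star,\,k\geq 0\}$ can be organised into a single sparse collection. That each $\mathcal R(R)$ is sparse and that the three-grid lemma passes to dyadic grids does not control the overlap as $R$ ranges over $\mathcal S^\star$: for fixed $k$ the map $R\mapsto\widetilde{R^{(k)}}$ has multiplicity $\sim 2^k$, and sparseness of the image is not automatic. The geometric factor $2^{-ck}$ is necessary but not sufficient here. The paper resolves this step by invoking \cite{CR}*{Theorem A}, which is exactly the statement that $\sum_{k\geq 0}2^{-ck}\sum_{R\in\mathcal S^\star}\langle f\rangle_{2^k R}\cic 1_R$ is pointwise dominated by a genuine sparse operator; you should either cite that result or provide a self-contained argument.
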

The remainder of this section is devoted to the proof of Proposition \ref{p:balayage}. Fix a compactly supported function $f$, and choose $Q\in \mathcal D$ with
$
\supp f \subset (1+3^{-1})Q$. Further, fix $\mathsf{a}=\{a_I:I\in \mathcal D\}$ subordinated to $f$, and without loss of generality assume   $a_I\geq 0$, and normalize  $\|\mathsf{a}\|_{\mathcal D}=1$. As $\mathsf{a}$ is fixed, for a generic collection $\mathcal E$, the notation $A_{\mathcal E}$ is used in the proof in place of $A_{\mathcal E} [\mathsf{a}]$. 

For the proof of \eqref{e:spros2}, note that    \eqref{eq:decomp} readily yields the splitting
\begin{equation} \label{e:splitsparse}
\begin{split}
A_{\mathcal D}   & \leq  \sum_ {|j|\leq 1}  A_ {\mathcal D(Q^{(0,j)})}    +
\sum_{\substack{ k\geq 1 \\ |j|\leq 1}} |a_{Q^{(k,j) }}|\cic{1}_{Q^{(k,j) }} +
   \sum_{\substack{ k\geq 0 \\2\leq |j| \leq 3 }} A_{\mathcal D(Q^{(k,j) })}. 
\end{split}
\end{equation} 
The proof is articulated into two constructions. The main term in \eqref{e:splitsparse} is the $|j|\leq 1$ summation while the last two summands entail error terms. We first deal with those.

\subsection{Tails}\label{sub:tails} Here we control the latter two sums on the right hand side of \eqref{e:splitsparse}. We note preliminarily that
\begin{equation}
\label{e:trivialdist}  I \in \mathcal D, \, I^{(\ell,0)} \in\left\{Q^{(k,\pm 2)},Q^{(k,\pm 3)}\right\} \implies
\mathrm{dist}(\supp f, I) \gtrsim 2^{-\ell} \ell_I\implies   \langle f\rangle_{I,\dagger} \lesssim 2^{-6\ell} \langle f \rangle_{\widetilde{Q^{(k)}}}.
\end{equation}
To obtain the last implication we used that $\|\chi^{9}_I \cic{1}_{\supp f}\|_\infty \lesssim 2^{-9\ell}  $,  $\supp f\subset \widetilde{Q^{(k)}}$, and  $|I| \gtrsim 2^{-\ell} |\widetilde{Q^{(k)}}|$. The second summand in \eqref{e:splitsparse} is estimated by the Carleson condition for a single scale,  as follows:
\begin{equation} \label{e:splitsparse1}
\begin{split}
\sum_{\substack{ k\geq 1 \\  |j|\leq 1 }} |a_{Q^{(k,j) }}|\cic{1}_{Q^{(k,j) }}
\leq \sum_{\substack{ k\geq 1 \\ |j|\leq 1 }}  \langle f\rangle_{Q^{(k,j)},\dagger} \cic{1}_{Q^{(k,j)}} \lesssim 
  T_{1,\mathcal R(Q)} f,
 \end{split}
\end{equation}
  using the notation of \S{\ref{sec:not}}, cf.\ \eqref{e:ndd} in particular. 
For the third summand in  \eqref{e:splitsparse}  we also proceed via single scale. Indeed, applying the Carleson condition  at the second step, and following with \eqref{e:trivialdist},  we have for $\tau \in\{\pm2,\pm3\}$ that
\begin{equation} \label{e:splitsparse2}
\begin{split}
A_{\mathcal D(Q^{(k,\tau) })} &=\sum_{ \ell\geq 0} \sum_{\substack{I\in \mathcal D \\ I^{(\ell,0)} =Q^{(k,\tau) }}} |a_I| \cic{1}_I
 \leq \sum_{ \ell\geq 0} \sum_{\substack{I\in \mathcal D \\ I^{(\ell,0)} =Q^{(k,\tau) }}} \langle f \rangle_{I,\dagger} \cic{1}_I\\ &
 \lesssim   \sum_{ \ell\geq 0}2^{-6\ell} \sum_{\substack{I\in \mathcal D \\ I^{(\ell,0)} =Q^{(k,\tau) }}} \langle f \rangle_{\widetilde{Q^{(k)}}} \cic{1}_I \lesssim \langle f \rangle_{\widetilde{Q^{(k)}}}  \cic{1}_{\widetilde{Q^{(k)}}}.
\end{split}
  \end{equation}
The last inequality shows that the third summation in \eqref{e:splitsparse} is also controlled by the sparse operator  $T_{1,\mathcal R(Q)}$ as on the rightmost side of \eqref{e:splitsparse1}.

\subsection{Main term} \label{ss:main2}

The main term in \eqref{e:splitsparse}
will be controlled via the intermediate estimate
\begin{equation}\label{e:sproint2}
A_{\mathcal D(I)} \lesssim T_{1,\mathcal Q(I),\dagger} f, \qquad  T_{1,\mathcal Q,\dagger} f\coloneqq \sum_{I\in \mathcal Q} \langle f \rangle_{1,I,\dagger} \cic{1}_I.
\end{equation}
for each $I\in\{Q^{(0,j)}, |j|\leq 1\}$, where $\mathcal Q(I)$ is a suitably constructed sparse collection. Then, 
\[
T_{1,\mathcal Q(I),\dagger} f\lesssim \sum_{k\geq 0}2^{-8k}\sum_{I\in \mathcal Q(I)} \langle f \rangle_{2,2^kI} \cic{1}_I
\]
so that two applications of \cite{CR}*{Theorem A}, cf.\ \cite{CR}*{Proof of Corollary A.1}, upgrade \eqref{e:sproint2} to \[
A_{\mathcal D(I)} \lesssim T_{1,\mathcal Q'(I)} f, \qquad  I\in\{Q^{(0,j)}, |j|\leq 1\}
\]
with a possibly different sparse collection $\mathcal Q'(I)$. Combining  these bounds   with  the estimates of Subsection \ref{sub:tails} completes the proof of \eqref{e:spros2}, and in turn of Proposition \ref{p:balayage}.

The proof of \eqref{e:sproint2} is a simple John--Nirenberg type iteration argument: details are as follows. For each $R\in \mathcal D(I)$, define the  collection
\[
\begin{split}
& \mathcal S(R)\coloneqq \textrm{maximal elements of }\left\{ Z\in \mathcal D(R): \sum_{\substack {W\in \mathcal D(R)\\Z\subset W  }}   a_W>4  \langle f \rangle_{R,\dagger} \right\},
\end{split}
 \]
As $\mathcal S(R)$ is a pairwise disjoint collection, an application of the Carleson condition in the last step yields the packing estimate:
\[
\sum_{Z\in \mathcal S(R)} |Z| \leq \frac{1}{4 \langle f \rangle_{R,\dagger}  } \sum_{Z\in \mathcal S(R)} \int_{Z} A_{\mathcal D(R)} \leq \frac{\|A_{\mathcal D(R)}\|_1}{4 \langle f \rangle_{R,\dagger}  } \leq \frac{|R|}{4}
\] 
while, setting $\mathcal D^\star (R)\coloneqq  \mathcal D  (R)\setminus \bigcup_{Z\in \mathcal S(R)} \mathcal D (Z)$,
\begin{equation}
\label{e:iterative2}
A_{\mathcal D(R)} \leq A_{\mathcal D^\star (R) }+ \sum_{Z\in \mathcal S(R)} A_{\mathcal D(Z)} \leq 4 \langle f \rangle_{R,\dagger} +    \sum_{Z\in \mathcal S(R)} A_{\mathcal D(Z)}.
\end{equation} 
Setting $\mathcal Q_0\coloneqq\{I\}$,  inductively define
\[ 	\begin{split}
&\mathcal  Q_{k+1}\coloneqq \bigcup_{R \in  \mathcal{Q}_{k}}\mathcal{S}(R), \quad k=0,1,\ldots,  \qquad \mathcal Q(I) \coloneqq \bigcup_{k\geq 0} \mathcal Q_{k}\end{split}  
\]
and observe that the previously obtained packing estimate ensures $\mathcal Q(I) $ is a sparse collection. Finally, iterating \eqref{e:iterative2},
\begin{equation}
\label{e:key0}
A_{\mathcal D(I)} \leq \sum_{R\in \mathcal Q}  A_{\mathcal D^\star(R)} \leq 4 T_{1,\mathcal Q(I),\dagger} f 
\end{equation}
which is the claimed \eqref{e:sproint2}.

\section{Proof of Theorem \ref{t:sparsesmth}}\label{sub:wave}
The proof of Theorem \ref{t:sparsesmth} relies upon a  suitable discretization of $G^\Omega$ into a wave packet coefficient square function. It is not difficult to show that the square sum of the wave packet coefficients of $f$ localized on a single spatial interval is a Carleson sequence subordinated to $|f|^2$ , so that the claim of the theorem readily follows from Proposition \ref{p:balayage}.

We turn to the details.   
For $j=0,1,2$ define the corresponding \emph{$j$-th tile universe} $\mathbb S^{j}\subset \mathcal D^{0}\times \mathcal D^{j}$ as the  set of  those $I\times \
\omega\in \mathcal D^{0}\times \mathcal D^{j}$  with $\ell_I \ell_\omega =1$.
The superscript $j$ is omitted whenever fixed and clear from context.
As customary, the notation $s=I_s\times \omega_s$ is employed for $s\in \mathbb S$. If $\mathbb P\subset \mathbb S$, we write 
\begin{equation}
\label{def:ti}
 \mathbb P(I) =\{s\in \mathbb P:\, I_s =I\},\qquad \mathbb P_{\subseteq }(I) =\{s\in \mathbb P:\, I_s \subseteq I\}
\end{equation}
for each interval $I\subset \R$. For our purposes, we are especially interested in   subcollections of tiles whose frequency intervals are pairwise disjoint, the precise definition being as follows. If $\Omega\subset \mathcal D$ is a collection of pairwise disjoint intervals, write
 \begin{equation}
 \label{def:somega}
 \mathbb P^{\Omega}\coloneqq \{s\in \mathbb P:\,\omega_s=\omega \textrm{ for some } \omega \in \Omega \}.
 \end{equation}
With these notations, let \ $\mathbb P^{\Omega}(I)\coloneqq \{s\in \mathbb P^{\Omega}: I_s=I \}$ for each interval $I\subset \R$.
Fix a large integer $D$. To each tile $s=I_s\times \omega_s$, recalling that $c_{\omega_s}$ denotes the center of $\omega_s$,  we associate the $L^1$-normalized wavelet class $\Psi_s$ consisting of those $\phi\in \mathcal C^\infty(\R) $ with 
\begin{equation}
\label{e:wavsupp}  \supp\widehat \phi \subset \omega_s
, \qquad  \sup_{0 \leq j\leq D} |I_s|^{1+j} \left\| \chi_{I_s}^{-D}\big(\exp(i c_{\omega_s} \cdot)\phi\big)^{(j)}\right\|_\infty \leq 1.
\end{equation}
The intrinsic wave packet coefficient of $f\in L^2(\R)$  is then defined by the maximal quantity
\begin{equation}
\label{def:sf}
s(f) \coloneqq \sup_{\phi\in \Psi_s}\left| \langle f,\phi\rangle \right|, \qquad s\in \mathbb S.
\end{equation}
The coefficients \eqref{def:sf} may be used to construct a smooth, approximately localized  analogue of the $L^2$ norm of $f$ on the torus   $I \in \mathcal D$.    Namely, if  $\Omega$ is a collection of pairwise disjoint dyadic intervals,  set
\[
\left[ f\right]_{\mathbb S^\Omega (I)} \coloneqq \left(\sum_{s\in \mathbb S^\Omega (I)} s(f)^2 \right)^{\frac12}, \qquad I \in \mathcal D.
\]
The next lemma shows that whenever $\Omega\subset \mathcal D$ is a pairwise disjoint collection, and  $f\in L^2(\R),$ the sequence $\{\left[ f\right]_{\mathbb S^\Omega(I)}^2: I \in \mathcal D\}$ is a Carleson sequence subordinated to the function $|f|^2$. 

\begin{lemma} \label{l:orthol} There holds $\displaystyle \sum_{J \in \mathcal D(I)} |J| \left[ f\right]_{\mathbb S^\Omega(J) }^2 \lesssim |I| \langle |f|^2\rangle_{I,\dagger}  $ uniformly  over $I \in \mathcal D$.
\end{lemma}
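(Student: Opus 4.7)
The plan is to recast the left-hand side as a wave-packet Bessel sum: rearranging,
$$
\sum_{J\in\mathcal D(I)} |J|\,[f]^2_{\mathbb S^\Omega(J)} = \sum_{s\in \mathbb S^\Omega_{\subseteq}(I)} |I_s|\, s(f)^2,
$$
so after picking near-optimizers $\phi_s\in\Psi_s$ with $|\langle f,\phi_s\rangle|\geq (1-\varepsilon)s(f)$, it suffices to prove the weighted Bessel inequality
$$
\sum_{s\in \mathbb S^\Omega_{\subseteq}(I)} |I_s|\, |\langle f,\phi_s\rangle|^2 \lesssim \int |f|^2\chi_I^9.
$$
Using $\langle f,\phi_s\rangle = \langle f\chi_I^{9/2},\, \chi_I^{-9/2}\phi_s\rangle$ with $\|f\chi_I^{9/2}\|_2^2 = \int|f|^2\chi_I^9$, the task is equivalent to showing that the renormalized system $\{e_s \coloneqq \sqrt{|I_s|}\,\chi_I^{-9/2}\phi_s\}_{s\in \mathbb S^\Omega_{\subseteq}(I)}$ is a Bessel sequence in $L^2(\mathbb R)$, with constant uniform in the admissible selections $\phi_s\in\Psi_s$.

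The Bessel property reduces via Schur's test to a uniform row-sum bound on the Gram matrix
$$
M_{s,s'} \coloneqq \langle e_s,e_{s'}\rangle = \sqrt{|I_s||I_{s'}|}\int \phi_s \overline{\phi_{s'}}\,\chi_I^{-9}.
$$
The decisive observation is that $\chi_I^{-9}(x) = (1+((x-c_I)/\ell_I)^2)^9$ is a polynomial of degree $18$; consequently its Fourier transform is a finite linear combination of derivatives of $\delta_0$. Multiplication by $\chi_I^{-9}$ therefore preserves Fourier support, so $\widehat{\phi_{s'}\chi_I^{-9}}$ is still supported in $\omega_{s'}$. Parseval's identity and the pairwise disjointness of $\Omega$ immediately yield
$$
M_{s,s'} = 0 \quad \text{whenever }\omega_s\neq \omega_{s'},
$$
collapsing the Gram matrix to its same-frequency blocks, in which $|I_s|=|I_{s'}|=1/|\omega_s|$ is forced.

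The remaining estimate of same-frequency entries is a routine same-scale wavelet calculation. Using $|\phi_s|\leq \chi_{I_s}^D/|I_s|$, the comparability $\chi_{I_s}\lesssim \chi_I$ valid for $I_s\subseteq I$, and splitting $\int \phi_s\overline{\phi_{s'}}\chi_I^{-9}$ into near ($|x-c_I|\leq\ell_I$) and far parts, one obtains, writing $\ell=1/|\omega_s|$ and $d=|c_{I_s}-c_{I_{s'}}|$,
$$
|M_{s,s'}|\lesssim \left(\tfrac{\ell}{d}\right)^{2D-2}+\left(\tfrac{\ell}{\ell_I}\right)^{4D-1},\qquad |M_{s,s}|\lesssim 1.
$$
Summing over the $\lesssim \ell_I/\ell$ dyadic siblings $I_{s'}\subseteq I$ at the scale of $I_s$ gives a convergent series for any $D$ large enough that $4D>19$. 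The main obstacle in this outline is choosing $D$ sufficiently large so that the polynomial growth of $\chi_I^{-9}$ far from $I$ is absorbed by the power-decay of the wave-packet product; this is precisely why the definition of $\Psi_s$ fixes $D$ to be a sufficiently large integer.
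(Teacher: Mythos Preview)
Your proof is correct and follows essentially the same approach as the paper's: both reduce to a Bessel\slash $TT^*$ estimate for the renormalized system $\{\sqrt{|I_s|}\,\chi_I^{-\alpha}\phi_s\}$, exploit the crucial fact that $\chi_I^{-9}$ is a polynomial (hence multiplication by it preserves Fourier support) to obtain exact orthogonality when $\omega_s\neq\omega_{s'}$, and then control the same-frequency Gram entries by standard same-scale wave packet decay. The only cosmetic differences are that the paper splits the weight asymmetrically as $\chi_I^{9}\cdot\chi_I^{-9}$ rather than symmetrically as $\chi_I^{9/2}\cdot\chi_I^{-9/2}$, and cites the $TT^*$ argument from \cite{APHPR} instead of spelling out Schur's test.
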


\begin{proof} Fix $I \in \mathcal D$. It suffices to show that
\begin{equation}
\label{e:interorth}
\sum_{J \in \mathcal D(I)} \sum_{s\in {\mathbb S}^\Omega(J)} |J| |\langle f,\phi_s\rangle|^2 \lesssim |I| \langle |f|^2 \rangle_{I,\dagger} 
\end{equation}for an arbitrary choice of $\phi_s\in \Psi_s$, for 
each $J \in \mathcal D(I)$ and $s\in \mathbb{S}^{\Omega} (J)$. Set $\varphi_s \coloneqq \chi_{I}^{-9} \phi_s$. Due to localization and to  the pairwise disjoint nature of the collection $\Omega$,  the almost orthogonality estimate
\[
\left|\langle \varphi_{s}, \varphi_{s'} \rangle \right| \begin{cases}
 =0, & \omega_s\neq \omega_{s'},
\\
 \lesssim |I_s|^{-1} \mathrm{dist}(I_s,I_{s'})^{-100},  & \omega_s=\omega_{s'},
\end{cases}
\] 
holds for all $s,s'\in \mathbb{S}^\Omega $ with $I_{s},I_{s'}\in \mathcal D(I)$. A standard $TT^*$ type argument, see for example \cite{APHPR}*{\S4.3}, yields the almost orthogonality bound
\begin{equation}\label{e:interorth2}
\sum_{J \in \mathcal D(I)} \sum_{s\in {\mathbb S}^\Omega(J)} |J| |\langle g,\varphi_s\rangle|^2 \lesssim \| g\|_2^2
\end{equation}
and \eqref{e:interorth} follows by applying \eqref{e:interorth2} to $g=f\chi_I^{9}$ and relying on  the definition of $\langle\cdot \rangle_{I,\dagger}$.
\end{proof}

A combination of Proposition \ref{p:balayage} and Lemma \ref{l:orthol} immediately yields a sparse domination result for the intrinsic wave packet square function
\begin{equation}
\label{eq:intrinsic}
W^{\Omega} f \coloneqq \left( \sum_{s\in \mathbb S^\Omega} s(f)^2 \cic{1}_{I_s}\right)^{\frac 12}= \sqrt{
A_{\mathcal D} \left[\left\{[f]_{\mathbb S^\Omega(I)} ^2:I\in \mathcal D\right\}\right]}.
\end{equation}

\begin{proposition} \label{p:tfasparse} Let $f\in L^2(\R)$ be a compactly supported function and     $\Omega \subset \mathcal D$ be a pairwise disjoint collection. Then there exists a sparse collection $\mathcal S$ with the property that
\[
W^{\Omega} f \lesssim  G_{2,\mathcal S} f
\]
pointwise almost everywhere. The implicit constant in the above inequality is absolute.
\end{proposition}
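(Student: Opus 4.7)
The plan is to recognize that Proposition \ref{p:tfasparse} is a direct consequence of the two ingredients just developed, with essentially no additional work beyond matching the formalism.

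First, I observe that the identity stated in \eqref{eq:intrinsic} rewrites $W^{\Omega} f$ as the square root of a balayage of the sequence
\[
\mathsf{a} = \bigl\{[f]_{\mathbb S^\Omega(I)}^2 : I \in \mathcal D\bigr\}
\]
over the full dyadic grid $\mathcal D$. The goal is thus reduced to showing that $\mathsf{a}$ is, up to a universal constant, a Carleson sequence subordinated to the $L^1$ function $|f|^2$, and then invoking Proposition \ref{p:balayage}.

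The subordination is exactly what Lemma \ref{l:orthol} provides: the estimate
\[
\sum_{J \in \mathcal D(I)} |J| \left[ f\right]_{\mathbb S^\Omega(J) }^2 \lesssim |I| \langle |f|^2\rangle_{I,\dagger}
\]
is precisely the Carleson condition \eqref{e:packing} applied to $\mathsf{a}$ with $|f|^2$ in the role of $f$. Note that $|f|^2 \in L^1(\R)$ has compact support since $f \in L^2(\R)$ has compact support, so the hypotheses of Proposition \ref{p:balayage} are satisfied.

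Applying Proposition \ref{p:balayage} to $|f|^2$ yields a sparse collection $\mathcal S$ such that
\[
A_{\mathcal D}[\mathsf{a}] \lesssim T_{1,\mathcal S}(|f|^2) = \sum_{Q\in \mathcal S} \langle |f|^2 \rangle_{Q} \cic{1}_Q = \sum_{Q\in \mathcal S} \langle f \rangle_{2,Q}^2 \cic{1}_Q
\]
pointwise almost everywhere. Taking square roots and comparing with \eqref{e:sparsestuf} gives $W^\Omega f \lesssim G_{2,\mathcal S} f$ pointwise a.e., which is the desired conclusion. There is no genuine obstacle here; the only thing to verify carefully is the bookkeeping identity $\sqrt{T_{1,\mathcal S}(|f|^2)} = G_{2,\mathcal S}f$, which is immediate from the definitions.
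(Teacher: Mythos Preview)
Your proposal is correct and follows essentially the same approach as the paper: you identify $W^\Omega f$ as the square root of the balayage $A_{\mathcal D}[\mathsf a]$ with $\mathsf a = \{[f]_{\mathbb S^\Omega(I)}^2\}$, invoke Lemma \ref{l:orthol} to verify the Carleson condition subordinated to $|f|^2$, apply Proposition \ref{p:balayage}, and conclude via the identity $\sqrt{T_{1,\mathcal S}(|f|^2)} = G_{2,\mathcal S} f$. This is precisely the argument given in the paper immediately after the statement.
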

Indeed, by Lemma \ref{l:orthol}, $\{[ f]_{\mathbb S^\Omega(I)}^2:\, I \in \mathcal D\}$ is a Carleson sequence subordinated to the function $|f|^2$. Thus Proposition \ref{p:tfasparse} is obtained via an application of Proposition  \ref{p:balayage}, followed by the observation that $\sqrt{T_{1, \mathcal S} \left(|f|^2\right) } = G_{2,\mathcal S} f$.

\subsection{Sparse estimates for smooth square functions: proof of Theorem~\ref{t:sparsesmth}} The relation of $G^{\Omega}$ with the wave packet square function $W^{\Omega}$ defined above is given by the pointwise estimate
\begin{equation}\label{eq:intrinstowp}
G^\Omega f \lesssim \sup_{1\leq k\leq 9} W^{\Omega^{k,\star}} f 
\end{equation}
where each $\Omega^{k,\star}$, $1\leq k \leq 9$, is a collection of pairwise disjoint intervals contained in one of the three grids $\mathcal D^{j}$, $j=0,1,2$. To obtain this pointwise bound, associate to each $\omega\in \Omega$ an index $j\in\{0,1,2\}$ and a  smoothing interval $\omega^\star \in  \mathcal D^{j}$, that is the unique interval of $\mathcal D^{j}$ with $\omega \subset  \omega^\star $ and $3 \ell_{\omega}\leq   \ell_{\omega^\star} < 6 \ell_{\omega}$. As the intervals of $\Omega$ are pairwise disjoint, $\Omega$   can be split into collections $\Omega^{k}$, $1\leq k \leq 9$, with the property that $\Omega^{k,\star}\coloneqq \{\omega^\star:\omega \in \Omega^{k}\}\subset \mathcal D^{j}$ for some $j$  and is a pairwise disjoint collection. A standard discretization procedure, see for example \cite{APHPR}*{Lemma 5.9}, then entails 
\[
G^{\Omega^k} f \lesssim W^{\Omega^{k,\star}} f 
\]
and \eqref{eq:intrinstowp} follows. We may finally combine Proposition \ref{p:tfasparse} with \eqref{eq:intrinstowp} to conclude Theorem~\ref{t:sparsesmth},  {using also that the union of $9$ sparse collections is still a sparse collection, see e.\ g.~\cite{LN}.}

\section{Proof of Theorem \ref{t:sparsergh}}
\label{sec:proofB}
The proof of Theorem \ref{t:sparsergh}, finalized at the end of this section, rests on a well known Littlewood--Paley type reduction to a model time-frequency square function appearing on the left hand side of \eqref{e:spro},  which we now introduce.
\subsection{Time-frequency square function} \label{sec:tfsf}

Fix a dyadic grid $\mathcal D$. Given an interval $\omega$ we let $\boldsymbol{\omega}\subset \mathcal D$ be a collection of dyadic intervals with the following properties.
\begin{itemize}
	\item[(i)] The collection $\boldsymbol{\omega}$ is pairwise disjoint.
	\item[(ii)] For each $k\in\Z$ there exists at most one $\alpha\in\boldsymbol{\omega}$ with $\ell_{\alpha}=2^k$.
	\item[(iii)] Each $\alpha\in \boldsymbol{\omega}$ satisfies $7^3\alpha\subset \omega$ and $7^4 \alpha\not\subset\omega$.
\end{itemize}
Observe that by (iii) the collection ${\boldsymbol{\omega}}$ is a subcollection of a dyadic Whitney covering of $\omega$ but not necessarily the whole Whitney cover. As a result properties (i) and (ii) can always be achieved by splitting $\omega$ into finitely many subcollections. Let $I\subset \R$ be any interval, possibly unbounded. 
Recalling  the definitions \eqref{def:ti}, \eqref{def:somega}, \eqref{e:wavsupp},    let $\Phi_{ \mathbb S}\coloneqq\{\varphi_s\in \Psi_s, \vartheta_s \in |I_s|\Psi_s: s \in \mathbb S\}$ be a choice of wave packets. We say that 
\begin{equation}
\label{e:tfp}
P_{I,\omega,\boldsymbol{\omega}}^{\Phi_{ \mathbb S}} f \coloneqq \sum_{s\in \mathbb S_\subseteq ^{\boldsymbol{\omega}} {(I)}} \langle f, \varphi_s\rangle\vartheta_s, \qquad \omega \in \Omega
\end{equation}
is a \emph{time-frequency projection} of $f$ on the time-frequency region $I\times \omega$. Note the $L^1$, $L^\infty$ normalizations of $\varphi_s,\vartheta_s$ respectively. We will drop the subindex $\mathbb S$ from $\Phi_{ \mathbb S}$ for the rest of the section and, whenever the choices of  $\Phi$  and $\boldsymbol \omega$ are fixed and clear from context, we will simplify the notation by writing  $P_{I,\omega}$, suppressing the dependence on $\Phi_{\mathbb S}$ and $\boldsymbol \omega$. It is easy to check that 
\begin{equation}
\label{e:tildeP}
\widetilde {P_{I,\omega}} f \coloneqq P_{I,\omega}(\chi^{-9}_I f) 
\end{equation} is a standard  Calder\'on--Zygmund operator, whence the estimates
\begin{equation}
\label{e:czo} \| P_{I,\omega}f\|_{p} \lesssim pp' |I|^{\frac1p} \langle f \rangle_{p,I,\dagger}, \qquad 1<p<\infty,
\end{equation}
hold uniformly over all bounded intervals $I$, which we will only use for $p=2$.  Furthermore, due to the frequency localization of the $\Psi_s$ classes for $s\in \mathbb S^{{\boldsymbol{\omega}}}$,   the equality
\begin{equation}
\label{e:frloc}  
P_{I,\omega} f= P_{I,\omega} H_\omega f,  
\end{equation}
holds for the frequency projection $H_{\omega}$ defined in \eqref{e:rc}.
The next theorem is a sparse domination principle for the square function   $\| P_{\R,\omega}\|_{\ell^2(\omega \in \Omega)}$ under pairwise disjointness assumption of the corresponding collection $\omega \in \Omega$. \begin{proposition}
\label{p:tfasparser} Let $\Phi$ be a choice of wave packets, $f\in L^2(\R)$ be a compactly supported function  and  $\Omega$ be a qualitatively finite, pairwise disjoint collection of intervals. 
Then there exists a sparse collection $\mathcal S$, depending on $\Phi,f,\Omega$ only  with the property that
\begin{equation}
\label{e:spro}
\left\|P_{\R,\omega}^{\Phi} f\right\|_{\ell^2(\omega\in \Omega)} \lesssim T_{2,\mathcal S} f
\end{equation}
pointwise almost everywhere, with implicit absolute numerical constant.
\end{proposition}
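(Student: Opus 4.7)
The plan is to construct the sparse collection $\mathcal S$ by an iterative stopping-time procedure, driven by the localized $L^2$-orthogonality estimate
\[
\sum_{\omega\in\Omega}\|P_{R,\omega}f\|_2^2 \lesssim |R|\langle f\rangle_{2,R,\dagger}^2,\qquad R\in\mathcal D,
\]
which is the rough counterpart of Lemma~\ref{l:orthol}. This inequality would be derived by writing $P_{R,\Omega}\coloneqq\sum_{\omega\in\Omega}P_{R,\omega}$, using Plancherel together with the frequency disjointness of $\{\omega\in\Omega\}$ and the localization $\widehat{P_{R,\omega}f}\subset\omega$ to pass from $\sum_\omega\|P_{R,\omega}f\|_2^2$ to $\|P_{R,\Omega}f\|_2^2$, and then applying \eqref{e:czo} with $p=2$ to the Calder\'on--Zygmund operator $\widetilde{P_{R,\Omega}}$. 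After the three-grid reduction \eqref{e:gotodyad}, I would fix a single dyadic grid $\mathcal D$, pick $Q_0\in\mathcal D$ with $\supp f\subset(1+3^{-1})Q_0$, and set $\mathcal S_0\coloneqq\{Q_0\}$.

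Recursively, given $R\in\mathcal S_k$, I would let $\mathcal E(R)$ consist of the maximal $R'\in\mathcal D(R)$ satisfying one of the following two rules: a Calder\'on--Zygmund-type stopping $\langle f\rangle_{2,R',\dagger}>A\langle f\rangle_{2,R,\dagger}$, or the local weak-$L^2$ stopping
\[
\bigl|\bigl\{x\in R': \|P_{R,\omega}f(x)\|_{\ell^2(\omega\in\Omega)}>B\langle f\rangle_{2,R,\dagger}\bigr\}\bigr|\geq \tfrac{|R'|}{2},
\]
with $A,B$ large absolute constants. Chebyshev applied to the orthogonality estimate bounds the measure of the second bad set; combined with the usual Calder\'on--Zygmund packing this yields $\bigl|\bigcup_{R'\in\mathcal E(R)}R'\bigr|\leq|R|/2$, so that $\mathcal S\coloneqq\bigcup_k\mathcal S_k$ is sparse. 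For $x\in R\setminus\bigcup_{R'\in\mathcal E(R)}R'$, I would split the tile expansion according to $I_s\subseteq R$ or $I_s\not\subseteq R$, obtaining
\[
\|P_{\R,\omega}f(x)\|_{\ell^2(\omega\in\Omega)}\leq \|P_{R,\omega}f(x)\|_{\ell^2(\omega\in\Omega)}+\Bigl\|\!\!\!\sum_{s\in\mathbb S^{\boldsymbol\omega}:\,I_s\not\subseteq R}\!\!\!\langle f,\varphi_s\rangle\vartheta_s(x)\Bigr\|_{\ell^2(\omega\in\Omega)},
\]
where the choice of $x$ and the second stopping rule control the first summand by $B\langle f\rangle_{2,R,\dagger}$, while the second summand, which collects tiles with $I_s\supsetneq R$ or $I_s\cap R=\varnothing$, would be treated by a scale-by-scale tail analysis analogous to \S\ref{sub:tails}, leveraging the rapid decay $|\vartheta_s|\lesssim\chi_{I_s}^D$ and the first stopping rule to dominate each $\langle f\rangle_{2,R^{(k)},\dagger}$ by a geometric series in $\langle f\rangle_{2,R,\dagger}$ along non-stopping ancestors.

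Iterating the pointwise bound down the stopping tree would then yield $\|P_{\R,\omega}f(x)\|_{\ell^2(\omega\in\Omega)}\lesssim\sum_{R\in\mathcal S}\langle f\rangle_{2,R,\dagger}\cic{1}_R(x)$, and a closing application of \cite{CR}*{Theorem~A}, mirroring \S\ref{ss:main2}, would absorb the tailed $L^2$-averages into honest $\langle f\rangle_{2,Q}$ averages over a possibly enlarged sparse collection, delivering \eqref{e:spro}. I expect the principal obstacle to be the pointwise, rather than merely $L^2$, control of the contribution from tiles $I_s\not\subseteq R$: in contrast with Theorem~\ref{t:sparsesmth}, where the $L^1$-normalized packets are spatially localized and a direct balayage argument on a Carleson sequence suffices, here the $L^\infty$-normalized wave packets $\vartheta_s$ carry non-trivial spatial tails, so the scale decomposition must be carefully interleaved with the stopping tree to avoid logarithmic losses and to sum the ancestor contributions without exceeding $\langle f\rangle_{2,R,\dagger}$.
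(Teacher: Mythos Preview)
The stopping-time skeleton and the final \cite{CR}-cleanup are in line with the paper's argument, but the proposal has a genuine gap at exactly the step you flag as the principal obstacle: the pointwise control of the tiles with $I_s\not\subseteq R$.

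Your plan is to bound $\bigl\|\sum_{s:\,I_s\not\subseteq R}\langle f,\varphi_s\rangle\vartheta_s(x)\bigr\|_{\ell^2(\omega)}$ by a ``scale-by-scale tail analysis analogous to \S\ref{sub:tails}'', using the decay of $\vartheta_s$ and the first stopping rule. This cannot work as stated. The analysis in \S\ref{sub:tails} hinges on the tiles being spatially separated from $\supp f$; for $R\subsetneq Q_0$ inside the iteration that is false. For the ancestor tiles $I_s=R^{(k)}$ one has $x\in I_s$, so $\vartheta_s(x)$ carries no decay whatsoever, and the single-scale bound of Lemma~\ref{l:taildomlem}(i) gives only $\langle f\rangle_{2,R^{(k)},\dagger}$. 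The first stopping rule pins each such average, for $R^{(k)}$ between two consecutive stopping ancestors $R$ and $R^\ast$, by $A\langle f\rangle_{2,R^\ast,\dagger}$, but there is \emph{no} geometric decay in $k$: summing over the $\log(|R^\ast|/|R|)$ intermediate scales produces precisely the logarithmic loss you worry about. The far tiles $I_s\cap R=\varnothing$ at small scales inside $Q_0\setminus R$ are similarly intractable without frequency information on $f$.

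The paper resolves this by a different architecture: it iterates the \emph{localized} quantity $\cic{1}_J\|P_{3J,\omega}f\|_{\ell^2(\omega)}$ rather than the global $\|P_{\R,\omega}f\|_{\ell^2(\omega)}$, and on each stopping child $Z$ replaces $P_{3I,\omega}$ by $P_{3Z,\omega}$ via an \emph{oscillation lemma} (Lemma~\ref{l:osclemma}):
\[
\sup_Z |P_{3Z,\omega}f-P_{3I,\omega}f|\lesssim \inf_{3Z}\mathrm{M} P_{3I,\omega}f+\inf_{3Z}\mathrm{M}\bigl[H_\omega(f\chi_I^9)\bigr].
\]
Its proof uses a \emph{maximal frequency truncation}: a smooth multiplier $\psi_Z$ adapted to the Whitney bands $\alpha\in\boldsymbol\omega$ with $\ell_\alpha<\ell_Z^{-1}$, so that the large-scale part of $P_{3I,\omega}f$ on $Z$ equals $(P_{3I,\omega}f)*\psi_Z$ up to one boundary scale. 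This Lacey--Thiele device \cite{LT} is the ingredient your toolkit is missing. Correspondingly, the paper's stopping sets are cut out by the \emph{maximal functions} $\|\mathrm{M} P_{3I,\omega}f\|_{\ell^2(\omega)}$ and $\|\mathrm{M}[H_\omega(f\chi_I^9)]\|_{\ell^2(\omega)}$, not by $\langle f\rangle_{2,R',\dagger}$ or the weak level set of $\|P_{R,\omega}f\|_{\ell^2}$: the oscillation lemma needs infimum-over-$3Z$ control of these maximal functions, which is exactly what maximality of $Z$ in that stopping construction provides and what your two stopping rules do not.
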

{The proof of  the  proposition is given in Subsection \ref{ss:pftfa}. It relies on two lemmas which we state  now. The first handles domination of tails.
\begin{lemma}
\label{l:taildomlem} Let $J\in \mathcal D$ and $\M_2 f\coloneqq (\M|f|^2)^{\frac12}$. Let  $\Phi$ be a choice of wave packets. The following pointwise bounds hold.
\begin{itemize}\setlength\itemsep{.6em}
\item[\emph{(i)}]  $\displaystyle \left\|\sum_{s\in{ \mathbb S}^{\boldsymbol{\omega}} (J)} \langle f, \varphi_s \rangle\vartheta_s \right\|_{\ell^2(\omega\in \Omega)} \lesssim \chi_{J}^{9}\langle f \rangle_{2,J,\dagger}$.
\item[\emph{(ii)}] Suppose $\mathrm{dist}(x,J) \gtrsim \ell_J$. Then $\displaystyle \left\| P_{J,\omega} f(x) \right\|_{\ell^2(\omega\in \Omega)} \lesssim \chi_{J}^{6} \mathrm{M}_2 f(x)$.
\item[\emph{(iii)}] If $  \supp f\subset 2J^{(0,\pm 2)} $, then $\displaystyle  
\left\| P_{J,\omega} f \right\|_{\ell^2(\omega\in \Omega)} \lesssim \chi_{J}^{9}\langle f \rangle_{2,7J}.$
\end{itemize}
\end{lemma}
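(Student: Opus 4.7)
The three bounds share a common strategy: (i) is a single-scale orthogonality estimate, from which (ii) and (iii) follow by summing over spatial scales $I\subseteq J$.

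For (i), the constraint $\ell_{I_s}\ell_{\omega_s}=1$ with $I_s=J$ fixes the dyadic scale of $\omega_s$, so by property (ii) of $\boldsymbol{\omega}$ the collection $\mathbb{S}^{\boldsymbol{\omega}}(J)$ contains at most one tile per $\omega\in \Omega$, which we denote by $s_\omega^J$. Property (iii) of $\boldsymbol{\omega}$ places $\omega_{s_\omega^J}\subset \omega$, and pairwise disjointness of $\Omega$ makes the wave packets $\{\varphi_{s_\omega^J}\}_{\omega}$ Fourier-disjoint, hence orthogonal in $L^2$, with $\|\varphi_{s_\omega^J}\|_2 \lesssim |J|^{-1/2}$. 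The crucial observation that upgrades the naive $\|f\|_2$ estimate to the tailed average $\langle f\rangle_{2,J,\dagger}$ is that $\chi_J^{-9}$ is a polynomial of degree $18$ in $x$, whose Fourier transform is supported at the origin; multiplication by $\chi_J^{-9}$ therefore preserves Fourier support, so $\{\chi_J^{-9}\varphi_{s_\omega^J}\}_{\omega}$ is still orthogonal with $L^2$-norm $\lesssim |J|^{-1/2}$, provided the decay parameter $D$ is large enough to absorb the polynomial growth of $\chi_J^{-9}$ against the decay $\chi_{I_s}^{D}$ of $\varphi_s$. Bessel's inequality applied to $f\chi_J^9$ with this orthogonal family then yields
\[
\sum_{\omega}|\langle f,\varphi_{s_\omega^J}\rangle|^2 = \sum_{\omega}|\langle f\chi_J^9,\chi_J^{-9}\varphi_{s_\omega^J}\rangle|^2 \lesssim |J|^{-1}\|f\chi_J^9\|_2^2 = \langle f\rangle_{2,J,\dagger}^2,
\]
and combined with the pointwise bound $|\vartheta_{s_\omega^J}(x)| \lesssim \chi_J^D(x) \leq \chi_J^9(x)$ (from $\vartheta_s\in |I_s|\Psi_s$) this proves (i).

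For (ii) and (iii), split $\mathbb{S}^{\boldsymbol{\omega}}_{\subseteq}(J) = \bigsqcup_{I\in \mathcal{D}(J)}\mathbb{S}^{\boldsymbol{\omega}}(I)$, apply the triangle inequality in $\ell^2_\omega$, and invoke (i) at each scale $I$ to reach the master estimate
\[
\|P_{J,\omega}f(x)\|_{\ell^2(\omega\in\Omega)} \lesssim \sum_{I\in \mathcal{D}(J)} \chi_I^D(x)\langle f\rangle_{2,I,\dagger}.
\]
For (ii), set $d\coloneqq \mathrm{dist}(x,J) \gtrsim \ell_J$ and bound $\chi_I^D(x) \lesssim (\ell_I/d)^{2D}$ for $I\subseteq J$, together with the tailed-average estimate
\[
\langle f\rangle_{2,I,\dagger} \lesssim \sum_{j\geq 0} 2^{-17j}\langle f\rangle_{2,2^jI} \lesssim (d/\ell_I)^{1/2}\M_2 f(x),
\]
the last inequality obtained by comparing each $\langle f\rangle_{2,2^jI}$ to the $L^2$-average over a ball $B(x,r_j)$ with $r_j\sim \max(d,2^j\ell_I)$ containing both $x$ and $2^jI$. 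Summing scale-by-scale over $I\in \mathcal{D}(J)$ (with $2^k$ intervals at scale $\ell_I=2^{-k}\ell_J$) produces a geometric series in $k$ convergent for $D$ large, with total bound $(\ell_J/d)^{2D-1/2}\M_2 f(x) \lesssim \chi_J^6(x)\M_2 f(x)$, as required.

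For (iii), the support assumption $\supp f \subset 2J^{(0,\pm 2)}\subset 7J$ puts $\supp f$ at distance $\gtrsim \ell_J$ from $J$, hence from every $I\subseteq J$ up to the $\ell_J/\ell_I$ factor in the decay of $\chi_I^9$ on $\supp f$. A direct computation then gives $\langle f\rangle_{2,I,\dagger} \lesssim 2^{-17k}\langle f\rangle_{2,7J}$ at scale $\ell_I=2^{-k}\ell_J$, while the scale-summation bound $\sum_{I\subseteq J,\, \ell_I=2^{-k}\ell_J}\chi_I^D(x) \lesssim 2^{k(1-2D)}\chi_J^D(x)$ yields a convergent geometric series with total $\lesssim \chi_J^D(x)\langle f\rangle_{2,7J} \leq \chi_J^9(x)\langle f\rangle_{2,7J}$ for $D\geq 9$. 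The principal obstacle is the Fourier-support-preservation trick in (i), which is what allows the tailed average to appear on the right-hand side; once that is in hand, (ii) and (iii) reduce to routine multi-scale bookkeeping that works once the wave-packet parameter $D$ is chosen sufficiently large.
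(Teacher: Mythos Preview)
Your approach is essentially the paper's: (i) via single-scale orthogonality with the $\chi_J^{-9}$ trick, then (ii), (iii) by summing (i) over $I\in\mathcal D(J)$. Your observation that $\chi_J^{-9}$ is a polynomial, so that multiplication by it preserves Fourier support, is exactly the mechanism behind the paper's tailed-average bound (cf.\ the proof of Lemma~\ref{l:orthol}).

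There is one genuine slip, in (iii). The claimed scale-summation bound
\[
\sum_{\substack{I\subseteq J\\ \ell_I=2^{-k}\ell_J}}\chi_I^{D}(x)\;\lesssim\; 2^{k(1-2D)}\chi_J^{D}(x)
\]
is false when $x\in J$: the interval $I$ containing $x$ already contributes $\chi_I^{D}(x)\sim 1$, while your right-hand side is $\sim 2^{k(1-2D)}\to 0$. The bound you wrote is only valid for $\mathrm{dist}(x,J)\gtrsim \ell_J$. A uniform replacement that works for all $x$ is, for instance,
\[
\sum_{\substack{I\subseteq J\\ \ell_I=2^{-k}\ell_J}}\chi_I^{D}(x)\;\lesssim\; \chi_J^{D-1}(x),
\]
with no extra $2^{-k}$ gain. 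Fortunately your argument does not need this gain: the factor $2^{-17k}$ coming from $\langle f\rangle_{2,I,\dagger}\lesssim 2^{-17k}\langle f\rangle_{2,7J}$ already makes the series in $k$ converge, and you recover $\chi_J^{9}\langle f\rangle_{2,7J}$ once $D\geq 10$. The paper handles this the same way, using only the decay of the tailed average (with the cruder $2^{-8\ell}$) to sum the scales. So the fix is immediate, but the displayed inequality as written should be replaced.
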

\begin{proof} The first estimate follows immediately from the two controls
\[
\sqrt{ \sum_{\omega\in \Omega}\sum_{s\in{ \mathbb S}^{\boldsymbol{\omega}} (J)} |\langle f, \varphi_s \rangle|^2 }\lesssim \langle f \rangle_{2,J,\dagger}, \qquad \sup_{s\in{ \mathbb S}^{\boldsymbol{\omega}} (J)} |\vartheta_s| \lesssim \chi_{J}^{9},
\]
the second meant pointwise. To obtain the bound in (ii), 
\begin{equation} \label{e:taildom2}
\begin{split} & \quad 
\left\| P_{J,\omega} f(x) \right\|_{\ell^2(\omega\in \Omega)} \leq \sum_{\ell\geq 0} \sum_{\substack{I\in \mathcal D(J)\\ I^{(\ell,0)}=J}} \left\|\sum_{s\in{ \mathbb S}^{\boldsymbol{\omega}} (I)} \langle f, \varphi_s \rangle\vartheta_s (x)\right\|_{\ell^2(\omega\in \Omega)}
\\ &\lesssim \sum_{\ell\geq 0} \sum_{\substack{I\in \mathcal D(J)\\ I^{(\ell,0)}=J}} \chi_{I}^{9}(x)\langle f\rangle_{2,I,\dagger}
 \lesssim 
\mathrm{M}_2 f(x)\sum_{\ell\geq 0}   \sum_{\substack{I\in \mathcal D(J)\\ I^{(\ell,0)}=J}}\chi_{I}^{8}(x) \lesssim \chi_{J}^{6}\mathrm{M}_2 f(x),
\end{split}
\end{equation}
having applied (i) with $J=I$ for each $I$ such that $I^{(\ell,0)}=J$.
We have employed the easily verified inequalities 
\[
\chi_I(x)\langle f\rangle_{2,I,\dagger} \lesssim   \mathrm{M}_2 f(x), \qquad \sum_{\substack{I\in \mathcal D(J)\\ I^{(\ell,0)}=J}} \chi_{I}^{8}(x) \lesssim 2^{-\ell} \chi_J^6(x), \qquad 
\]
valid for $I\subset J$, $\mathrm{dist}(x,J)\gtrsim \ell_J$. To obtain the bound of (iii), start again from the   right hand side of the first line of \eqref{e:taildom2}, and apply (i) for each $I$ in the summation, so that
\begin{equation} \label{e:taildom3}
\begin{split} & \quad 
\left\| P_{J,\omega} f \right\|_{\ell^2(\omega\in \Omega)} \lesssim \sum_{\ell\geq 0} \sum_{\substack{I\in \mathcal D(J)\\ I^{(\ell,0)}=J}} \ \chi_{I}^{9} \langle f\rangle_{2,I,\dagger}
  \lesssim \chi_{J}^{9}\langle f\rangle_{2,7J}\end{split}
\end{equation}
as claimed. We have used that for each $I$ as above, $\mathrm{dist}(\supp f,I) \gtrsim 2^{\ell} \ell_I$. Together with $\supp f \subset 2 J^{(0,\pm 2)}\subset 7J$, it follows that $\langle f\rangle_{2,I,\dagger}\lesssim 2^{-8\ell}\langle f\rangle_{2,7J}$, whence the last inequality in \eqref{e:taildom3}. This completes the proof of the Lemma. 
\end{proof}
The second lemma encapsulates the main iteration of the proof of  Proposition \ref{p:tfasparser}. 
\begin{lemma}\label{l:mainiter}	 Let $J\in \mathcal D$ and $f\in L^2(\R)$. Let $\Phi$ be a choice of wave packets. Then there exists a sparse collection $\mathcal Q=\mathcal Q(\Phi,J,f,\Omega)$ with the property that, pointwise almost everywhere
\begin{equation}
	\label{e:spro2}
\left\|1_J P_{3J,\omega}^{\Phi} f \right\|_{\ell^2(\omega \in \Omega)} \lesssim T_{2,\mathcal Q} f.
\end{equation}
\end{lemma}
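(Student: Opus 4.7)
The plan is to prove Lemma \ref{l:mainiter} by a stopping-time induction on the qualitatively finite tile universe associated with $P^\Phi_{3J,\omega} f$, following the same conceptual pattern as the proof of \eqref{e:sproint2} in Proposition \ref{p:balayage}. Fix a large absolute constant $C_0$ and define the stopping family
\[
\mathcal E(J) \coloneqq \mathrm{maximal}\,\{Z \in \mathcal D(J): \langle f\rangle_{2,Z,\dagger} > C_0 \langle f\rangle_{2,J,\dagger}\}.
\]
The weak $L^2$ estimate for the tail-corrected maximal function provides $\sum_{Z\in \mathcal E(J)}|Z|\le |J|/4$ for $C_0$ large enough. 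Applying Lemma \ref{l:mainiter} recursively to each $Z \in \mathcal E(J)$ (valid by qualitative finiteness) produces sparse $\mathcal Q(Z) \subseteq \mathcal D(Z)$ with $\|\mathbf 1_Z P^\Phi_{3Z,\omega}f\|_{\ell^2(\omega)}\lesssim T_{2,\mathcal Q(Z)} f$, and we set $\mathcal Q \coloneqq \{J\} \cup \bigcup_{Z \in \mathcal E(J)}(\{Z\} \cup \mathcal Q(Z))$. Sparsity of $\mathcal Q$ is immediate from the packing of $\mathcal E(J)$ and the inductive sparsity of the $\mathcal Q(Z)$.

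The heart of the proof is the single-step pointwise inequality
\[
\|\mathbf 1_J P^\Phi_{3J,\omega} f\|_{\ell^2(\omega)} \le C\langle f\rangle_{2,J,\dagger}\mathbf 1_J + \sum_{Z \in \mathcal E(J)} \Bigl(\|\mathbf 1_Z P^\Phi_{3Z,\omega} f\|_{\ell^2(\omega)} + C\langle f\rangle_{2,Z,\dagger}\mathbf 1_Z\Bigr),
\]
from which, iterating the lemma over the stopping tree, one obtains $\|\mathbf 1_J P^\Phi_{3J,\omega}f\|_{\ell^2(\omega)} \lesssim \sum_{R\in \mathcal Q} \langle f\rangle_{2,R,\dagger}\mathbf 1_R$. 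The tail-corrected averages on the right are upgraded to genuine $L^2$-averages via \cite{CR}*{Theorem A} as in the concluding step of the main-term analysis in Subsection \ref{ss:main2}, possibly enlarging $\mathcal Q$ to a larger sparse collection and yielding the desired $T_{2,\mathcal Q} f$ bound.

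To prove the single-step inequality I would partition
\[
\mathbb S^{\boldsymbol\omega}_\subseteq(3J) = \mathcal A \sqcup \mathcal B \sqcup \mathcal G \sqcup \bigsqcup_{Z \in \mathcal E(J)} \mathcal F_Z,
\]
where $\mathcal A = \{s: I_s = J^{(1)}\}$ is the $O(1)$ large tiles, $\mathcal B = \{s: I_s \subseteq J^{(0,\pm 1)},\ I_s \cap J = \varnothing\}$ the side tiles, $\mathcal G = \{s: I_s \in \mathcal D^\star(J)\}$ with $\mathcal D^\star(J) = \mathcal D(J) \setminus \bigcup_Z \mathcal D(Z)$ the good inside scales, and $\mathcal F_Z = \{s: I_s \in \mathcal D(Z)\}$. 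The $\mathcal A$-contribution is bounded on $J$ by $\lesssim \langle f\rangle_{2,J^{(1)},\dagger}$ via Lemma \ref{l:taildomlem}(i). The $\mathcal G$-contribution is the telescoping sum over the chain of $\mathcal D^\star(J)$-intervals containing $x$; each scale contributes $\lesssim \chi_{I_s}^9(x)\langle f\rangle_{2,I_s,\dagger} \lesssim \chi_{I_s}^9(x)\langle f\rangle_{2,J,\dagger}$ by the stopping condition, yielding the desired bound after summing the geometric decay. The $\mathcal F_Z$-contribution is rewritten as $P^\Phi_{3Z,\omega} f$ minus a correction indexed by $\mathcal D(3Z)\setminus \mathcal D(Z)$, the latter an $(\mathcal A \cup \mathcal B)$-type sum at scale $Z$, bounded pointwise on $Z$ by $\lesssim \langle f\rangle_{2,Z,\dagger}$ exactly as for $J$.

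The main obstacle is the side piece $\mathcal B$: the wavelets $\vartheta_s$ with $I_s \subseteq J^{(0,\pm 1)}$ may fail to decay on $J$ when $I_s$ is small and near the shared boundary, so a naive triangle-in-$I_s$ estimate diverges. The remedy is to split $f = f\mathbf 1_{\widetilde J} + f \mathbf 1_{\mathbb R \setminus \widetilde J}$: on the near part Lemma \ref{l:taildomlem}(iii) provides a $\chi_J^9 \langle f\rangle_{2,7J}$-sized bound per side-interval scale after identifying each $J^{(0,\pm 1)}$-side-tile subcollection with an appropriate $P_{J^{(0,\pm 1)},\omega}$ operator; on the far part, the decay of $\varphi_s$ against $f\mathbf 1_{\mathbb R\setminus \widetilde J}$ forces $|\langle f,\varphi_s\rangle| \lesssim 2^{-D\ell}\langle f\rangle_{2,2^\ell J}$ at scale $2^{-\ell}|J|$, and the telescoping $\ell^2(\omega)$-sum is controlled pointwise by $\mathrm M_2 f(x)$ via Lemma \ref{l:taildomlem}(ii), which on $J \setminus \bigcup \mathcal E(J)$ is in turn $\lesssim C_0\langle f\rangle_{2,J,\dagger}$ by the stopping property.
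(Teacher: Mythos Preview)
Your approach has a genuine gap in the treatment of the ``good'' piece $\mathcal G$. You claim that each scale contributes $\lesssim \chi_{I_s}^9(x)\langle f\rangle_{2,I_s,\dagger}$ and that the sum over the chain of $\mathcal D^\star(J)$-intervals containing $x$ is controlled ``after summing the geometric decay''. But there is no geometric decay: for every $I_s$ in that chain one has $x\in I_s$, hence $\chi_{I_s}^9(x)\simeq 1$, and the stopping condition only gives the \emph{upper bound} $\langle f\rangle_{2,I_s,\dagger}\le C_0\langle f\rangle_{2,J,\dagger}$, not a decaying one. The sum therefore picks up a factor comparable to the number of scales between $J$ and the minimal tile scale, which is not bounded by an absolute constant. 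This is not a technicality: for a single $\omega$ the Whitney structure of $\boldsymbol\omega$ forces exactly one tile per spatial scale, and the operator $P_{3J,\omega}$ is a genuine singular integral whose pointwise size is not recovered by the triangle inequality over scales.

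This is precisely why the paper does \emph{not} stop on $\langle f\rangle_{2,Z,\dagger}$. Instead it stops on the level sets of $\bigl\|\mathrm M P_{3I,\omega}f\bigr\|_{\ell^2(\omega)}$ and $\bigl\|\mathrm M[H_\omega(f\chi_I^9)]\bigr\|_{\ell^2(\omega)}$, so that on the complement of the stopping set one has the pointwise control $\|P_{3I,\omega}f\|_{\ell^2(\omega)}\lesssim \langle f\rangle_{2,I,\dagger}$ \emph{for free}. The passage from $P_{3I,\omega}$ to $P_{3Z,\omega}$ on each stopping child $Z$ is then handled by an oscillation lemma (Lemma~\ref{l:osclemma}) which exploits the Whitney structure of $\boldsymbol\omega$ via a smooth frequency cutoff $\psi_Z$ adapted to the scales $\ell_\alpha\le(\ell_Z)^{-1}$, in the spirit of the Lacey--Thiele single-tree estimate. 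Your Calder\'on--Zygmund-type stopping on averages of $f$ cannot substitute for this: it provides no mechanism to control the full singular sum $P_{3I,\omega}f(x)$ at a good point $x$, and the scale-by-scale decomposition you propose destroys the cancellation that makes $P_{3I,\omega}$ bounded in the first place.
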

The proof of this lemma is more involved and thus occupies its own   subsection.
}

\subsection{Proof of Lemma \ref{l:mainiter}} The collection $\Phi$ is fixed throughout this proof and thus omitted from the notation.
 Arguing as in Subsection \ref{ss:main2}, cf.\ \eqref{e:sproint2}, it suffices to prove   the weaker result that for some sparse collection $\mathcal Q$ 
\begin{equation}\label{e:sproint}
\left\|1_J P_{3J,\omega} f \right\|_{\ell^2(\omega \in \Omega)} \lesssim T_{2,\mathcal Q,\dagger} f, \qquad  T_{2,\mathcal Q,\dagger} f\coloneqq \sum_{I\in \mathcal Q} \langle f \rangle_{2,I,\dagger} \cic{1}_I,
\end{equation}
and later upgrade  \eqref{e:sproint}  to \eqref{e:spro2} via   \cite{CR}, with a possibly different sparse collection $\mathcal Q$.

The proof of \eqref{e:sproint} rests on an iterative inequality whose first step is a stopping construction. Fix again a large constant $\Theta$ to be determined. For each $I\in \mathcal D(J)$, define   the stopping  sets and collections
 \[
 \begin{split}
&E_1(I)\coloneqq   \left\{ x\in { I}: \,\left\|\mathrm{M} P_{3I,\omega} f\right\|_{\ell^2(\omega\in \Omega)} >\Theta \langle f  \rangle_{2,I,\dagger}\right\}, 
\\[.5em] 
&E_2(I)\coloneqq   \left\{ x\in { I}: \,\left\|\mathrm{M} \left[H_\omega (f\chi_I^9)\right]\right\|_{\ell^2(\omega\in \Omega)} >\Theta \langle f  \rangle_{2,I,\dagger}\right\},
\\[.5em] 
& \mathcal S(I) \coloneqq \left\{\textrm{maximal elements } Z\in \mathcal D:\,  { Z}  \subset  E(I)\coloneqq E_1(I)\cup E_2(I)\right\}.
\end{split}
 \]
This time, the maximality condition ensures   
 \begin{align}\label{e:stop2} 
 \inf_{ { 3Z} } \left\|\mathrm{M} P_{3I,\omega} f\right\|_{\ell^2(\omega\in  \Omega)}  +   \inf_{  { 3Z}} \left\|\mathrm{M} \left[H_\omega(f\chi_I^9)\right]\right\|_{\ell^2( \omega\in \Omega)} &\lesssim \langle f  \rangle_{2,I,\dagger}, \quad Z\in \mathcal S(I),
 \\
 \label{e:stopinf} \left\|\mathrm{M} P_{3I,\omega} f(x)\right\|_{\ell^2(\omega\in \Omega)} &\leq \Theta   \langle f  \rangle_{2,I,\dagger}, \quad x \in{ { I}\setminus E(I)}.
\end{align}
Now set $\mathcal Q_0\coloneqq\{J\}$. Proceed  inductively, defining  
\[ 	\begin{split}
&\mathcal  Q_{k+1}\coloneqq \bigcup_{I \in  \mathcal{Q}_{k}}\mathcal{S}(I), \quad k=0,1,\ldots,  \qquad \mathcal Q\coloneqq \bigcup_{k\geq 0} \mathcal Q_{k}.\end{split}  
\]
 Arguing in the same way as \cite{CCDPO17}*{Proof of eq.\ (2.22)}, see also \cite{CDPPV22}*{Section 4}, the fact that $\mathcal Q$ is a sparse collection is easily verified once the estimates $|E_j(I)|<2^{-16}|I|$, $j=1,2$ are proved. In the case of $E_1(I)$, provided $\Theta $ is large enough, this  follows from Chebyshev and 
 \[
 \begin{split} \frac{1}{|I|} \int \left\|\mathrm M P_{3I,\omega} f\right\|^2_{\ell^2(\omega\in \Omega)} &\lesssim \frac{1}{|I|} \sum_{\omega\in \Omega }\ \left\|\widetilde{P_{3I,\omega}} H_\omega (f\chi_{I}^9)\right\|^2_{2} \lesssim 
 \frac{1}{|I|} \sum_{\omega\in \Omega }\ \left\|  H_\omega (f\chi_{I}^9)\right\|^2_{2}  \leq \langle f  \rangle_{2,I,\dagger}^2
 \end{split}
 \]
 having used  the maximal theorem in the first step, \eqref{e:czo} for the second inequality and orthogonality of the projections $H_\omega$ in the last.  A shorter computation leads to the same estimate for $E_2(I)$.
 The next lemma is the main device that controls the oscillation. The maximal frequency truncation idea dates back to the single tree estimate in  Lacey and Thiele's seminal paper on the Carleson operator \cite{LT}.
The proof is given at the end of this subsection.
\begin{lemma} \label{l:osclemma} Let $Z\in\mathcal S(I)$. There holds
\[  \sup_{ {Z}}  \left|P_{{3Z},\omega}f - P_{3I,\omega} f\right| \lesssim \inf_{{3Z}} \mathrm{M} P_{3I,\omega} f +  \inf_{{ 3Z}} \mathrm{M}\left[ H_{\omega} (f\chi_{I}^9)\right].
\] 
\end{lemma}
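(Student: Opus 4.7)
The plan is to exploit the time-frequency decomposition
\[
P_{3I,\omega}f(x) - P_{3Z,\omega}f(x) = \sum_{s \in \mathbb T} \langle f,\varphi_s\rangle \vartheta_s(x), \qquad \mathbb T := \mathbb S^{\boldsymbol\omega}_\subseteq(3I) \setminus \mathbb S^{\boldsymbol\omega}_\subseteq(3Z),
\]
and partition $\mathbb T = \mathbb T_\uparrow \cup \mathbb T_\downarrow$ according to $\ell_{I_s} \geq \ell_Z$ or $\ell_{I_s} < \ell_Z$. A short dyadic inspection, using $I_s \subseteq 3I$ together with $I_s \not\subseteq 3Z$, shows that every $I_s$ with $s \in \mathbb T_\downarrow$ is disjoint from $Z$ with $\mathrm{dist}(I_s, Z) \gtrsim \ell_Z$, while tiles in $\mathbb T_\uparrow$ live at $O(\log(\ell_I/\ell_Z))$ scales with $O(1)$ tiles per scale. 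Using \eqref{e:frloc} I would then substitute $\langle f, \varphi_s\rangle = \langle H_\omega g, \varphi_s\rangle + \langle H_\omega h, \varphi_s\rangle$, with $g := f\chi_I^9$ and $h := f - g$.

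For the fine-scale contribution, I would express $\sum_{s \in \mathbb T_\downarrow} \langle H_\omega g, \varphi_s\rangle \vartheta_s(x)$ as an integral of $H_\omega g$ against the kernel $K_\downarrow(x, y) := \sum_{s \in \mathbb T_\downarrow}\overline{\varphi_s(y)}\vartheta_s(x)$. The decay $\chi_{I_s}^D(x) \lesssim (\ell_{I_s}/\ell_Z)^{D}$ at $x \in Z$, combined with the $L^1$-normalization of $\varphi_s$, allows summing $K_\downarrow$ scale by scale into a Calder\'on--Zygmund-type kernel uniformly bounded in $x \in Z$ by $\chi_{3Z}^{6}(y)(|x - y|+\ell_Z)^{-1}$. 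Pairing with $H_\omega g$ and applying the maximal theorem then gives a bound by $\inf_{3Z}\mathrm{M}[H_\omega(f\chi_I^9)]$, the second right-hand-side term.

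For the coarse-scale contribution, the key is to write
\[
\sum_{s \in \mathbb T_\uparrow}\langle f, \varphi_s\rangle \vartheta_s(x) = P_{3I,\omega}f(x) - P_{3Z,\omega}f(x) - \sum_{s \in \mathbb T_\downarrow}\langle f, \varphi_s\rangle \vartheta_s(x)
\]
and use that each $\vartheta_s$ with $\ell_{I_s} \geq \ell_Z$, once demodulated by $\exp(-ic_{\omega_s}\,\cdot)$, is slowly varying on $3Z$. Decomposing $\vartheta_s$ accordingly and leveraging the Calder\'on--Zygmund structure \eqref{e:tildeP}--\eqref{e:czo} of $\widetilde{P_{3I,\omega}}$, the coarse-scale sum at $x \in Z$ can be approximated by $\avgint_{3Z}P_{3I,\omega}f$ plus error terms absorbed into the fine-scale analysis. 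Since $\avgint_{3Z}|P_{3I,\omega}f| \leq \mathrm{M} P_{3I,\omega}f(y)$ for every $y \in 3Z$, one obtains the desired bound by $\inf_{3Z}\mathrm{M} P_{3I,\omega}f$.

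The main obstacle is the rigorous treatment of the coarse-scale sum: the wave packets $\vartheta_s$ oscillate at frequencies inside $\omega$ and are not genuinely slowly varying on $3Z$, so the bound requires careful exploitation of the $\omega_s$-localization structure in the spirit of Lacey--Thiele-type single-tree estimates. The non-local tail involving $H_\omega h$ presents an additional difficulty, since $h$ is not supported away from $I$, and its contribution must be absorbed into $\mathrm{M} P_{3I,\omega}f$ via the scale-by-scale decay argument used in \S\ref{sub:tails}.
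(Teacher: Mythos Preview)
Your decomposition into coarse tiles $\mathbb T_\uparrow$ and fine tiles $\mathbb T_\downarrow$ matches the paper's splitting, and your treatment of the fine scales via the kernel bound is essentially the paper's tail estimate \eqref{e:retail2}. The split $f=g+h$ with $h=f(1-\chi_I^9)$ is unnecessary, however: since $\chi_I^{-9}\varphi_s$ is still an admissible wave packet for $I_s\subset 3I$, one may simply write $\langle f,\varphi_s\rangle=\langle H_\omega(f\chi_I^9),\chi_I^{-9}\varphi_s\rangle$ and avoid the tail $h$ entirely.

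The genuine gap is in the coarse-scale step. Your plan to demodulate each $\vartheta_s$ and treat it as slowly varying on $3Z$ cannot be carried out as stated, because the coarse tiles do \emph{not} share a common modulation. The Whitney property of $\boldsymbol\omega$ forces the frequency intervals $\omega_s$ of tiles with $\ell_{I_s}>\ell_Z$ to lie within distance $O(\ell_Z^{-1})$ of $\partial\omega$, so the centers $c_{\omega_s}$ range over an interval of length $\simeq\ell_Z^{-1}$ near each endpoint of $\omega$. After demodulating by any fixed frequency, the residual phases still oscillate at scale $\ell_Z^{-1}$ on $3Z$, which is exactly the critical scale; no single point value or average of $P_{3I,\omega}f$ falls out. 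Your own remark that this ``requires careful exploitation of the $\omega_s$-localization structure'' is correct, but the mechanism you describe (approximating by $\avgint_{3Z}P_{3I,\omega}f$ via slow variation of individual $\vartheta_s$) does not supply it.

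The paper's device is a frequency-side cutoff rather than a spatial approximation. One constructs a Schwartz function $\psi_Z$ with $|Z|\|\chi_Z^{-9}\psi_Z\|_\infty\lesssim 1$ and $\widehat{\psi_Z}$ supported in the $O(\ell_Z^{-1})$-shell near $\partial\omega$, equal to $1$ on every $\omega_s$ with $\ell_{I_s}>\ell_Z$ and equal to $0$ on every $\omega_s$ with $\ell_{I_s}<\ell_Z$. Then
\[
\sum_{\substack{s\in\mathbb S_\subseteq^{\boldsymbol\omega}(3I)\\ \ell_{I_s}>\ell_Z}}\langle f,\varphi_s\rangle\vartheta_s
=\Big(P_{3I,\omega}f-\sum_{s\in\mathbb S(Z,\omega)}\langle f,\varphi_s\rangle\vartheta_s\Big)*\psi_Z,
\]
where $\mathbb S(Z,\omega)$ is the single boundary scale $\ell_{I_s}=\ell_Z$. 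Convolution with $\psi_Z$ is pointwise dominated by $\mathrm M$, giving $\inf_{3Z}\mathrm M P_{3I,\omega}f$; the boundary-scale correction is controlled exactly as the fine tiles, by $\inf_{3Z}\mathrm M[H_\omega(f\chi_I^9)]$. This is the Lacey--Thiele maximal frequency truncation you allude to, and it is the missing ingredient in your sketch.
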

{Now for each $I\in \mathcal D(J)$, with the stopping collection $\mathcal S(I)$ at hand, pick $x\in I$. Then either $x \in I \setminus E(I)$, in which case
\begin{equation}
\label{e:goodset}	
\left\|  {P}_{3I,\omega} f(x)\right\|_{\ell^2(\omega\in \Omega)} \leq C \langle f  \rangle_{2,I,\dagger} ,
\end{equation}
by virtue of  \eqref{e:stopinf}, or $x\in Z$ for some $Z\in \mathcal S(I)$, in which case 
 \begin{equation}
\label{e:badset}	
\left\|  {P}_{3I,\omega} f(x)\right\|_{\ell^2(\omega\in \Omega)} \leq    \left\| P_{3Z,\omega} f  (x)\right\|_{\ell^2(\omega\in \Omega)} + C \langle f  \rangle_{2,I,\dagger} 
\end{equation}
via an application of Lemma \ref{l:osclemma} and \eqref{e:stop2}.
}
It follows that
\begin{equation}
\label{e:recrough} \cic{1}_{I}
\left\|  {P}_{3I,\omega} f \right\|_{\ell^2(\omega\in \Omega)} \leq  C \langle f  \rangle_{2,I,\dagger} \cic{1}_{I} +
\sum_{Z\in \mathcal Q } \cic{1}_{Z} \left\| P_{3Z,\omega} f \right\|_{\ell^2(\omega\in \Omega)}.  \end{equation}
Starting from $I=J,$ iterate \eqref{e:recrough} to obtain
\[
\cic{1}_{J} 
\left\| P_{3J,\omega} f\right\|_{\ell^2(\omega\in \Omega)}\ \lesssim \sum_{I\in \mathcal Q}  \langle f  \rangle_{2,I,\dagger} \cic{1}_I,
\]
completing the proof of \eqref{e:sproint}, and in turn of Lemma \ref{l:mainiter}.

\begin{proof}[Proof of Lemma \ref{l:osclemma}] Note that 
\begin{equation}
\label{e:lemma10}
\begin{split} & \quad 
|P_{{3Z},\omega}f - P_{3I,\omega} f|\\ &= \left|\sum_{s\in \mathbb S_\subseteq ^{{\boldsymbol{\omega}}} ({3I})\setminus \mathbb S_\subseteq ^{{\boldsymbol{\omega}}} (3Z)  } \langle f,\varphi_s\rangle \vartheta_s\right| 
\leq  \left| \sum_{\substack{s\in \mathbb S_\subseteq ^{{\boldsymbol{\omega}}} (3I)\\ \ell_{I_s}> \ell_Z }} \langle f,\varphi_s\rangle\vartheta_s \right| 
+ \left|  \sum_{\substack{|j|\geq 2\\ Z^{(0,j)}\subset 3 I }}P_{ Z^{(0,j)} ,\omega} f \right|.
\end{split}
\end{equation}
Let us handle the tail term in \eqref{e:lemma10}. {The separation between $Z$ and the small scales contained in $Z^{(0,j)}$ for some $j\geq 2$ allows for the standard Calder\'on--Zygmund tail estimate
\begin{equation}
	\label{e:retail2}
\cic{1}_{Z}
\left| \sum_{\substack{|j|\geq 2\\ Z^{(0,j)}\subset 3I }}P_{ Z^{(0,j)} ,\omega} f\right|= \cic{1}_{Z}
 \left| \sum_{\substack{|j|\geq 2\\ Z^{(0,j)}\subset 3I }} \sum_{s\in \mathbb S_\subseteq ^{{\boldsymbol{\omega}}} (Z^{(0,j)})} \langle H_\omega(f\chi_{I}^{9}) , \varphi_s \rangle \vartheta_s \right| \lesssim  \inf_{{ 3Z}} \mathrm{M}\left[ H_{\omega} (f\chi_{I}^9)\right].
\end{equation}
The proof is essentially a repetition of the one for Lemma \ref{l:taildomlem} (ii) using $L^1$ averages instead, and thus the details are omitted.}
The first term in \eqref{e:lemma10} is the main term. To handle it define the sets
\[
\beta_Z\coloneqq \mathrm{Conv}\left(  \bigcup \left\{\alpha \in \boldsymbol{\omega}:\, \ell_\alpha < (\ell_Z)^{-1}\right\}\right), \qquad  \gamma_Z\coloneqq \mathrm{Conv}\left(\bigcup \left\{\alpha \in \boldsymbol{\omega}:\, \ell_\alpha \leq  (\ell_Z)^{-1}\right\}\right).
\]
Using the Whitney property of $\boldsymbol{\omega}$, there exist  positive constants $c_1,c_2, c_3,c_4$ with $c_2-c_1 \simeq 1, c_4-c_3 \simeq  1 $
such that if ${\omega}=[a,b)$,
\[
\beta_Z=\left(a, a + c_1 (\ell_Z)^{-1}\right) \cup 
\left(b-c_3(\ell_Z)^{-1}, b  \right) \subset \gamma_Z = \left(a, a+ c_2 (\ell_Z)^{-1}\right) \cup 
\left(b-c_4(\ell_Z)^{-1}, b  \right) .
\]
 Therefore, we may choose a smooth function $\psi_Z$ with the properties that
\[
|Z|\|\chi_Z^{-9}\psi_{Z} \|_{\infty} \lesssim 1, \qquad \cic{1}_{\beta_Z}\leq \widehat{\psi_Z} \leq \cic{1}_{\gamma_Z}.\]
In particular,  $\widehat{\psi_Z}  =1$ on $\omega_s$ whenever $s\in \mathbb S_\subseteq^{{\boldsymbol{\omega}}}(3I)$ and $\ell_{I_s}>\ell_Z$, given that  in this case $\omega_s\subset \beta_Z$, while  $\widehat{\psi_Z} =0$ on $\omega_s$ whenever $s\in \mathbb S_\subseteq^{{\boldsymbol{\omega}}}(3I)$ and $\ell_{I_s}<\ell_Z$, given that instead $\omega_s\cap \gamma_Z =\varnothing$. 
Hence, the   main term of \eqref{e:lemma10} equals $P_{3I,\omega}*\psi_Z$, up to removal of the tiles at scale   $\ell_{I_s}=\ell_Z$, on whose frequency intervals $\widehat \psi_Z$ is not necessarily equal to zero or one. For the details,
 let $\mathbb S(Z,\omega)$ be the set of tiles with $I_s\subset 3I $, $\ell_{I_s}=\ell_Z$ and $\omega_s\in {\boldsymbol{\omega}}$.  
The spatial intervals of the tiles $\mathbb S(Z,\omega)$ are contained in $3I$, pairwise disjoint and of the same scale $\ell_Z$, so that   for $x\in Z$
\begin{equation} \label{e:lemma11}  \left|
\sum_{s\in \mathbb S(Z,\omega)} \langle f,\varphi_s
\rangle \vartheta_s(x)  \right| = \left|\sum_{s\in \mathbb S(Z,\omega)} \langle H_\omega (f\chi_{I}^9),\chi_{I}^{-9}\varphi_s
\rangle \vartheta_s(x)  \right| \lesssim \inf_{3Z} \mathrm{M}\left[ H_{\omega} (f\chi_{I}^9)\right].
\end{equation}
Then
\begin{equation}
\label{e:lemma12} \begin{split}
  \left|\sum_{\substack{s\in \mathbb S_\subseteq ^{{\boldsymbol{\omega}}} (3I)\\ \ell_{I_s}> \ell_Z }}\langle f,\varphi_s\rangle \vartheta_s\right| = \left|\left( P_{3I,\omega} f - \sum_{s\in \mathbb S(Z,\omega)} \langle f,\varphi_s\rangle \vartheta_s\right) * {\psi_Z}\right|  \lesssim \inf_{3Z} \mathrm{M}\left[ P_{3I,\omega}  f\right]+\inf_{3Z} \mathrm{M}\left[ H_{\omega} (f\chi_{I}^9)\right].
\end{split}
\end{equation}
Together with \eqref{e:retail2} and  \eqref{e:lemma11}, this completes the proof of the lemma.
\end{proof}

\subsection{Proof of Proposition \ref{p:tfasparser}}  \label{ss:pftfa}
Fix an instance of $\Omega, \Phi$ and let $f$ be a fixed compactly supported function in $L^2(\R)$. It is possible to choose $Q\in \mathcal D$ with the property that{
$
\supp f \subset (1+3^{-1})Q$.}
 A first lemma takes care of the tails
\[
P^\Phi_{\mathsf{out},\omega} f\coloneqq  P^\Phi_{\R,\omega} f  - \cic{1}_{  5Q} P^\Phi_{3Q ,\omega}f, \qquad \omega \in \Omega.
\] 
\begin{lemma}  With  $\mathcal{R}(Q) $ as in \eqref{e:ndd}, there holds
\begin{equation}
\label{e:tailhandling}
\left\|P_{\mathsf{out},\omega}^\Phi f\right\|_{\ell^2(\omega\in \Omega)} \lesssim { T_{2,\mathcal{R}(Q)}}f+\mathrm{M}_2 f
\end{equation}
pointwise almost everywhere.
\end{lemma}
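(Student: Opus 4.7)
The plan is to leverage the dyadic decomposition \eqref{eq:decomp} relative to $Q$ together with the tail bounds of Lemma~\ref{l:taildomlem}. Applied to the tile universe $\mathbb S^{\boldsymbol{\omega}}$ indexed over $\mathcal D$, \eqref{eq:decomp} yields the identity
\[
P_{\mathsf{out},\omega}^\Phi f = \bigl(1 - \cic{1}_{5Q}\bigr) P_{3Q,\omega}^\Phi f + \Sigma_1 + \Sigma_2,
\]
with
\[
\Sigma_1 \coloneqq \sum_{\substack{k \geq 1\\ |j| \leq 1}} \sum_{s \in \mathbb S^{\boldsymbol{\omega}}(Q^{(k,j)})} \langle f, \varphi_s\rangle \vartheta_s, \qquad \Sigma_2 \coloneqq \sum_{\substack{k \geq 0\\ 2 \leq |j| \leq 3}} P_{Q^{(k,j)}, \omega}^\Phi f.
\]
For $x \notin 5Q$, $\mathrm{dist}(x, Q^{(0,j)}) \gtrsim \ell_Q$ for each $|j| \leq 1$, so Lemma~\ref{l:taildomlem}(ii) gives $\|P_{Q^{(0,j)},\omega}^\Phi f(x)\|_{\ell^2(\omega)} \lesssim \chi_{Q^{(0,j)}}^6(x)\mathrm{M}_2 f(x)$, whence the first summand is controlled pointwise by $\mathrm{M}_2 f$.

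The next step is to bound $\Sigma_1$. Lemma~\ref{l:taildomlem}(i) applied at each $Q^{(k,j)}$ delivers the pointwise bound $\chi_{Q^{(k,j)}}^9(x)\langle f\rangle_{2,Q^{(k,j)},\dagger}$. Since $\supp f \subset (1+3^{-1})Q$ and $\chi_{Q^{(k,j)}} \simeq 1$ on $\supp f$ whenever $|j| \leq 1$, one computes directly $\langle f\rangle_{2,Q^{(k,j)},\dagger} \simeq 2^{-k/2}\langle f\rangle_{2,Q}$. At each $x$, the resulting geometric sum $\sum_{k\geq 1, |j|\leq 1} 2^{-k/2} \chi_{Q^{(k,j)}}^9(x)$ is dominated by $2^{-k_\star(x)/2}$, where $k_\star(x)$ is the least integer with $x \in \widetilde{Q^{(k_\star)}} = 5Q^{(k_\star)}$. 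Since $\langle f \rangle_{2,\widetilde{Q^{(k_\star)}}} \simeq 2^{-k_\star/2}\langle f\rangle_{2,Q}$, the bound is controlled by a single term of $T_{2,\mathcal R(Q)}f(x)$. For $\Sigma_2$, a parallel argument works: for each $I \in \mathcal D(Q^{(k,j)})$ with $I^{(\ell,0)} = Q^{(k,j)}$ and $2 \leq |j| \leq 3$, the separation of $\supp f$ from $I$ forces via \eqref{e:trivialdist}-type estimates
\[
\langle f\rangle_{2,I,\dagger} \lesssim 2^{-18\ell}\,2^{-k/2}\langle f\rangle_{2,Q};
\]
expanding $P_{Q^{(k,j)},\omega}^\Phi f$ by scales and applying Lemma~\ref{l:taildomlem}(i) to each piece collapses the inner sum in $\ell$, yielding $\|P_{Q^{(k,j)},\omega}^\Phi f(x)\|_{\ell^2(\omega)} \lesssim 2^{-k/2}\langle f\rangle_{2,Q}\chi_{Q^{(k,j)}}^6(x)$. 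Summation over $k,j$ as for $\Sigma_1$ then produces the pointwise control by $T_{2,\mathcal R(Q)}f$.

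The main technical point is to package the multi-scale decay of the wave packet coefficients (through $\chi_I$) and the $L^2$-average dilution from large parent cubes (through the factor $2^{-k/2}$) so that the resulting pointwise bound at each $x$ collapses either onto $\mathrm{M}_2 f$ or onto a single active term of the sparse operator $T_{2,\mathcal R(Q)}f$; no vector-valued orthogonality beyond that already encoded in Lemma~\ref{l:taildomlem} is invoked.
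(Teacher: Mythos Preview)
Your proof is correct and follows essentially the same route as the paper's. Both arguments split $P_{\mathsf{out},\omega}^\Phi f$ into the off-support piece $(1-\cic{1}_{5Q})P_{3Q,\omega}^\Phi f$ (handled by Lemma~\ref{l:taildomlem}(ii)) and the remainder $P_{\R,\omega}^\Phi f - P_{3Q,\omega}^\Phi f$, which is decomposed via \eqref{eq:decomp} into single-scale pieces $|j|\leq 1$, $k\geq 1$ and far pieces $2\leq |j|\leq 3$, $k\geq 0$; the only cosmetic difference is that for the far pieces you expand by scale and apply Lemma~\ref{l:taildomlem}(i), whereas the paper invokes Lemma~\ref{l:taildomlem}(iii) directly---but the proof of (iii) is exactly that scale expansion. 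One small notational slip: since $\supp f\subset (1+3^{-1})Q$ rather than $Q$, the reference average should be $\langle f\rangle_{2,\widetilde Q}$ (or $\langle f\rangle_{2,5Q}$) rather than $\langle f\rangle_{2,Q}$; this does not affect the argument.
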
  

\begin{proof} In this proof, $\Phi$ is fixed and thus omitted from superscripts. First of all, note that
\[
P_{\mathsf{out},\omega} = \left[ P_{\R,\omega} f -   P_{3Q ,\omega} \right] + \cic{1}_{\mathbb R \setminus 5Q}   P_{3Q ,\omega}
\]
and the summand outside the square bracket, that is the nonlocal part of $P_{3Q,\omega}$, is immediately controlled by  (ii) of Lemma \ref{l:taildomlem}. Therefore, it suffices to control the difference $P_{\R,\omega} f -   P_{3Q ,\omega}$, which by \eqref{eq:decomp} satisfies
\[
\begin{split}
\left|P_{\R,\omega} f -   P_{3Q ,\omega}f \right|&\leq \sum_{|m|\leq 1} \sum_{k\geq 1} \left| \sum_{s\in \mathbb S ^{{\boldsymbol{\omega}}} (Q^{(k,m)})} \langle f, \varphi_s \rangle\vartheta_s \right|+ 
\sum_{|m|=2,3 } \sum_{k\geq 0} \left| P_{Q^{(k,m)},\omega}f\right|
\\
 &\eqqcolon \sum_{|m|\leq 1} \sum_{k\geq 1} U_{m,k,\omega} + 
\sum_{|m|=2,3 } \sum_{k\geq 0}  V_{m,k,\omega}.
\end{split}
\]
Applying respectively (i) and (iii) of Lemma \ref{l:taildomlem} yields for all $k\geq 0$ the pointwise estimates
\begin{equation}
\label{e:retail}
\left\|U_{m,k,\omega}\right\|_{\ell^2(\omega\in \Omega)}  \lesssim \langle f \rangle_{2,\widetilde{Q^{(k)}}} \chi_{\widetilde{Q^{(k)}}}^{9},  \qquad  \left\|V_{m,k,\omega} \right\|_{\ell^2(\omega\in \Omega)} \lesssim \langle f \rangle_{2,\widetilde{Q^{(k)}}} \chi_{\widetilde{Q^{(k)}}}^{9},
\end{equation} the first of which holds uniformly over $ k\geq 1, |m|\leq 1$, while the second holds uniformly over $ k\geq 0, |m|\in\{2,3\}$. For the second control let $\tau\in\{\pm2,\pm3\}$ and apply Lemma~\ref{l:taildomlem} (iii) with $J= Q^{(k,\tau)}$ together with the fact that $\langle f \rangle_{2,7Q^{(k,\tau)}} \simeq  \langle f \rangle_{2,\widetilde{Q^{(k)}}}$. The desired estimate follows since  
\[
\supp f\subset \widetilde{Q^{(k)}},\qquad |7Q^{(k,\tau)}|\simeq |\widetilde{Q^{(k)}}|.
\]
Fix now  $m$ and a point $x\in\mathbb R$. Summing \eqref{e:retail} up over $k$ and splitting according to whether or not $x\in\widetilde{Q^{(k)}}$ entails the claim of the lemma. 
\end{proof}

As {$ T_{2,\mathcal{R}(Q)}$ is a sparse operator} and  $\mathrm{M}_2 f$ obeys a sparse bound of the type \eqref{e:spro},   it remains to control 
\[
\cic{1}_{5Q}P^\Phi_{3Q,\omega} = \sum_{\substack{ |m|\leq 1\\ |j|\leq 2}}\cic{1}_{Q^{(0,j)}}P^\Phi_{Q^{(0,m)},\omega}. 
\]
Applying Lemma \ref{l:taildomlem} (ii) we gather that
\begin{equation}
\label{e:tailhandled} |j-m|>1 \implies
\left\|\cic{1}_{Q^{(0,j)}}P^\Phi_{Q^{(0,m)},\omega}f\right\|_{\ell^2(\omega\in \Omega)} \lesssim  \mathrm{M}_2 f.
\end{equation}
If $|j-m|\leq 1$ we instead have
\begin{equation}
	\label{e:jms}
 \cic{1}_{Q^{(0,j)}} P^\Phi_{Q^{(0,m)},\omega}= \cic{1}_{J}P^{\Phi^{j,m}}_{3J,\omega},\qquad \omega\in \Omega,
\end{equation}
having set $J=Q^{(0,j)}$ and having constructed $\Phi^{j,m}=\{ \varphi_s^{j,m}, \vartheta_s: s\in \mathbb S\}$  from $\Phi= \{ \varphi_s , \vartheta_s: s\in \mathbb S\}$ as follows: $\varphi_s^{j,m}= \varphi_s$ if $I_s\subset Q^{(0,m)}$ and $\varphi_s^{j,m}= 0$ otherwise. Applying Lemma \ref{l:mainiter} to each right hand side of \eqref{e:jms} and combining with  \eqref{e:tailhandling}--\eqref{e:tailhandled} completes the proof of Proposition \ref{p:tfasparser}.
\subsection{Rough square functions: proof of Theorem \ref{t:sparsergh}} 
Using \cite{DPFR}*{eq. (2.10)}, we learn that
\begin{equation}\label{e:smoothtf}
\left |T_\omega f\right| \lesssim \sum_{j=1} ^{285} \left| P_{\R,\omega,\boldsymbol{\omega}_j} ^{\Phi_j} f  \right|, \qquad \omega\in \Omega
\end{equation}
where, for each $j\in\{1,\ldots,285\}$, $\Phi_j$ is a suitable collection of wave packets, $\boldsymbol{\omega}_j$ is a collection of dyadic intervals satisfying properties (i)--(iii) of \S\ref{sec:tfsf}, and both $\Phi_j$ and $\boldsymbol{\omega}_j$ are constructed on the fixed shifted grid $\mathcal D^{k_j}$, for some $k_j\in \{0,1,2\}$. Proposition \ref{p:tfasparser} then immediately implies the conclusion of Theorem~\ref{t:sparsergh}.

\section{The Walsh case}\label{SWalsh} In this section, we will prove Theorem \ref{TheoremWalshRdF}. The strategy of proof involves a Walsh version of the wave packet square function $W^\Omega$. An $L^2(w)$-quantitative relation between this object and the  Walsh--Rubio de Francia square function  is provided by the Chang--Wilson--Wolff inequality, \cite{WilsonBook}*{Theorem 3.4}.

\subsection{The setting for the Walsh model} Let $\mathbb T=[0,1)$ be the $1$-torus. For $ k =0,1,2,... $, define the \emph{Walsh function $ W_{2^k} $}
\[
W_{2^k}(x) \coloneqq \sign \left( \sin \left( 2^{k+1} \pi x \right) \right),\qquad x\in \mathbb T.
\]
Now, for $ n \in \N $, write the binary expansion of $ n $ as
\[
n = \sum_{k=0}^{\infty} n_k 2^{k},
\]
where $ n_k \in \{0,1\} $ for every $ k =0,1,2,\ldots$, and define the $n$-th \emph{Walsh function} $ W_{n} $ as
\begin{equation}
	\label{e:defwf}
W_n(x) \coloneqq \prod_{k=0}^{\infty} W_{2^k}(x)^{n_k}, \qquad x\in \mathbb T.
\end{equation} The functions $\{W_n: n\in \mathbb N\}$ are the characters of the Walsh group $\big(\mathbb T,\oplus\big)$, 
where $\oplus$ stands for addition of binary digits without carry, cf.\ \cite{DG} for an introduction from the Harmonic Analysis viewpoint.
For an interval $ \omega =[k,m)\subset \mathbb R$ with $k,m \in \mathbb N$, recall the definition of Walsh frequency projection
\[
	H_{\omega} f(x) \coloneqq \sum_{n=0}^{\infty} \ind_{\omega}(n) \langle f,W_n \rangle W_n(x),\qquad x\in \mathbb T.
\]
For a collection $ \Omega =\{\omega\}_{\omega\in\Omega}$ of pairwise disjoint intervals with endpoints in $\mathbb N$, recall that the Walsh--Rubio de Francia square function is defined as
\[
T^{\Omega} f(x) \coloneqq \left( \sum_{ \omega \in \Omega } \Abs{ H_{\omega} f(x) }^2 \right)^{\frac{1}{2} } ,\qquad x\in\mathbb T.
\]
Below we describe the time-frequency model for this square function. We say that $ s = I_{s} \times \omega_{s} \subseteq \mathbb T \times [0,\infty) $ is a tile if $ I_{s} $ and $ \omega_s $ are dyadic intervals satisfying $ \abs{ I_s } \cdot \abs{ \omega_s } = 1 $. Then, for every tile $ s $ we can find an integer $ n=n(s) \in \N $ so that
\begin{equation}\label{EqWalshTileDef}
s = I_{s} \times \omega_{s} = I_s \times \frac{ 1 }{ \abs{ I_s } } [n,n+1).
\end{equation}
 Letting $\mathbb S$ denote the universe of all tiles thus defined, the notations \eqref{def:ti}, \eqref{def:somega} will be used in exactly the same way below.
Given a tile $ s \in \mathbb S$, we define the $L^2$-normalized wave packet associated to $ s $ by
\[
\varphi_s (x) \coloneqq \frac{ 1 }{ \abs{ I_s }^{\frac{1}{2}} } W_{n(s)} \left( \frac{x}{\ell_{I_s}} \right) \ind_{I_s}(x), \qquad x\in \mathbb T.
\]
  Observe that
the Haar functions arise as a special case of these wave packets by taking $ s = I \times \frac{1}{\abs{I}} [1,2) $ with $I$ dyadic subinterval of $\mathbb T$, namely
\[
h_{I}(x) \coloneqq \frac{ 1 }{ \abs{ I }^{\frac{1}{2}} } W_1 \left( \frac{x}{\ell_{I}} \right) \ind_{I}(x), \qquad x\in\mathbb T.
\]
Then, the wave packet square function for the  Walsh model is then defined by
\begin{equation}\label{eq:Walshmodel}
W^\Omega f(x) \coloneqq \left( \sum_{ s \in \mathbb {S}^\Omega  } \abs{ \langle f,\varphi_s \rangle }^2 \frac{ \ind_{ I_s }(x) }{ \abs{ I_s } } \right)^{\frac12},\qquad x\in \mathbb T.
\end{equation}
\subsection{The Walsh wave packet square function} Describing the relation between $T^\Omega$ and its wave packet model requires some preliminaries.
For  $\omega\in \Omega$, denote by $\boldsymbol{\omega}$ the collection of maximal dyadic intervals in $ \omega $. Imagining the frequency intervals as living on the vertical real axis, denote by $\boldsymbol{\omega}^{u} $ the collection of dyadic intervals $ \omega \in \boldsymbol{\omega}$ which are the  upper half of their dyadic parent,  and  by $\boldsymbol{\omega}^ d \coloneqq  \boldsymbol{\omega}\setminus \boldsymbol{\omega}^u$ those which are the lower half of their parent.
Then set
\[
\Omega^\star\coloneqq \bigcup_{\sigma\in\{u,d\}}  \bigcup_{\omega\in\Omega} \boldsymbol{\omega}^\sigma.
\]
The following lemma is a consequence of the Chang--Wilson--Wolff inequality, \cite{CWW,WilsonBook}.
\begin{lemma}\label{lem:walshcww} Let $ w \in A_{\infty,\mathcal D} $. Then, the following inequality holds:
\[
\Norm{ T^{ \Omega } f }{L^{2}(w)} \lesssim \left[w\right]_{A_{\infty,\mathcal D}}^{\frac{1}{2}}   \Norm{ W^{\Omega^\star} f }{L^{2}(w)}.
\]
\end{lemma}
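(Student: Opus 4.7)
The strategy is to expand each Walsh frequency projection $H_\omega f$ into a sum of Walsh wave packets and then appeal to a vector-valued form of the Chang--Wilson--Wolff inequality.

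First, I would establish the wave packet identity for Walsh frequency projections. For each $\omega\in\Omega$, the Whitney decomposition gives $H_\omega f=\sum_{\alpha\in\boldsymbol{\omega}}H_\alpha f$. For each dyadic $\alpha=[k2^a,(k+1)2^a)$, multiplication by $W_{k2^a}$ is an $L^2(\mathbb T)$-isometry mapping $\mathrm{span}\{W_n:n\in\alpha\}$ onto $\mathrm{span}\{W_m:m\in[0,2^a)\}$, the latter being the space of functions constant on dyadic intervals of length $2^{-a}$. This identifies $\mathrm{span}\{W_n:n\in\alpha\}$ with $\mathrm{span}\{\varphi_s:\omega_s=\alpha\}$, whence
\[
H_\alpha f=\sum_{s\in\mathbb S,\,\omega_s=\alpha}\langle f,\varphi_s\rangle\varphi_s.
\]
The spatial intervals $\{I_s:\omega_s=\alpha\}$ partition $\mathbb T$ and $|\varphi_s|^2=\mathbf{1}_{I_s}/|I_s|$, so pointwise $|H_\alpha f|^2=\sum_{s:\omega_s=\alpha}|\langle f,\varphi_s\rangle|^2\mathbf{1}_{I_s}/|I_s|$. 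Summing over $\alpha\in\boldsymbol{\omega}=\boldsymbol{\omega}^u\cup\boldsymbol{\omega}^d$ and over $\omega\in\Omega$ yields
\[
\sum_{\omega\in\Omega}\sum_{\alpha\in\boldsymbol{\omega}}|H_\alpha f|^2 = W^{\Omega^\star}f^2.
\]

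Next, decompose $H_\omega f=H_\omega^u f+H_\omega^d f$ with $H_\omega^\sigma f=\sum_{\alpha\in\boldsymbol{\omega}^\sigma}H_\alpha f$, so that by the triangle inequality in $\ell^2_\omega$ it suffices to control each $\sigma$-piece. For each fixed $\sigma\in\{u,d\}$, the intervals in $\boldsymbol{\omega}^\sigma$ have nested parents (the $u$-pieces stack toward the left endpoint of $\omega$, the $d$-pieces toward the right), and after conjugation by a Walsh character indexed by that endpoint, the family $\{H_\alpha f\}_{\alpha\in\boldsymbol{\omega}^\sigma}$ becomes a sequence of martingale differences at distinct scales in the standard dyadic filtration on $\mathbb T$. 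Pairwise disjointness of $\Omega$ ensures orthogonality of the martingale differences across different $\omega$'s, placing us in the regime of the vector-valued form of \cite{WilsonBook}*{Theorem 3.4}. Applied to $(H_\omega^\sigma f)_{\omega\in\Omega}$, this yields
\[
\Bigl\|\bigl(\textstyle\sum_{\omega}|H_\omega^\sigma f|^2\bigr)^{\frac12}\Bigr\|_{L^2(w)}\lesssim[w]_{A_{\infty,\mathcal D}}^{\frac12}\Bigl\|\bigl(\textstyle\sum_{\omega,\alpha\in\boldsymbol{\omega}^\sigma}|H_\alpha f|^2\bigr)^{\frac12}\Bigr\|_{L^2(w)}\leq[w]_{A_{\infty,\mathcal D}}^{\frac12}\|W^{\Omega^\star}f\|_{L^2(w)},
\]
where the second inequality uses the identity from the previous step. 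Combining the $\sigma\in\{u,d\}$ contributions bounds $\|T^\Omega f\|_{L^2(w)}$ as required.

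The main obstacle is the martingale interpretation in the second step. The $u$/$d$ splitting is forced because no single standard filtration accommodates all $\alpha\in\boldsymbol{\omega}$ simultaneously; once combined with the Walsh-character modulation exploiting the nested parent structure of each $\boldsymbol{\omega}^\sigma$, the vector-valued Chang--Wilson--Wolff inequality can be invoked directly.
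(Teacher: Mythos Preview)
Your proof is correct and follows essentially the same route as the paper: split into $\sigma\in\{u,d\}$, use the tree/modulation identity (what the paper quotes as \cite{HytonenLacey}*{Lemma 2.2}) to rewrite each $H_\omega^\sigma f$ as a dyadic Haar martingale, apply Chang--Wilson--Wolff, and identify the resulting square function with $W^{\Omega^\star}f$ via the pointwise identity $|H_\alpha f|^2=\sum_{\omega_s=\alpha}|\langle f,\varphi_s\rangle|^2\cic{1}_{I_s}/|I_s|$. The only cosmetic difference is that the paper applies scalar CWW to each fixed $\omega$ and then sums, whereas you package this as a ``vector-valued'' CWW; in $L^2(w)$ these are literally the same computation since $\|(\sum_\omega|g_\omega|^2)^{1/2}\|_{L^2(w)}^2=\sum_\omega\|g_\omega\|_{L^2(w)}^2$, so the ``orthogonality across different $\omega$'s'' you mention is not actually used and can be dropped from the argument.
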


\begin{proof}Let $ \omega=[k,m) $ be an interval with endpoints in $\mathbb N$, and write
\[
\begin{split}
H_\omega f 
& =\sum_{n=0}^{m-1} \langle f,W_n \rangle W_n - \sum_{n=0}^{k-1} \langle f,W_n \rangle W_n 
=\sum_{\sigma \in \{ u,d \}} \sum_{ s \in \mathbb{S}^{\boldsymbol{\omega}^\sigma }} \langle f,\varphi_{s} \rangle \varphi_{s},
\end{split}
\]
for the last identity see \cite[Section 8.1]{Thi06} (also \cite{HytonenLaceyParissis}*{p. 995}). By symmetry,  only consider the case $ \sigma = d $ and study the operator
\[
T_{d}^{\Omega} f \coloneqq \left( \sum_{\omega\in\Omega} \Abs{ \sum_{ s \in \mathbb{S}^{\boldsymbol{\omega}^d }} \langle f,\varphi_{s } \rangle \varphi_{s } }^2 \right)^{\frac{1}{2}}.
\]
Note that for each fixed $\omega\in \Omega$, the frequency components of the tiles in $\mathbb{S}^{\boldsymbol{\omega}^d }$ form a Whitney decomposition of $\omega$ with respect to the right endpoint of $\omega$. Using this fact, which in time-frequency terminology says that $\mathbb{S}^{\boldsymbol{\omega}^d }$ is a tree, together with \cite{HytonenLacey}*{Lemma 2.2} yields the identity
\begin{equation}\label{eq:treeident}
\Abs{ \sum_{ s \in \mathbb{S}^{\boldsymbol{\omega}^d }} \langle f,\varphi_{s} \rangle \varphi_{s} } =
\Abs{ \sum_{ s \in \mathbb{S}^{\boldsymbol{\omega}^d }} \langle f , W_{n(s)}  h_{I_s} \rangle h_{I_s} },
\end{equation}
where   $n(\omega)\in \mathbb N$ depends on $\omega\in\Omega$. Thus  \[
\begin{split}
& \quad \left\|\sum_{ s \in\mathbb{S}^{\boldsymbol{\omega}^d }} \langle f,\varphi_{s} \rangle \varphi_{s} \right\|_{L^2(w)}^2 
= \left\|  \sum_{ s \in \mathbb{S}^{\boldsymbol{\omega}^d }} \langle f W_{n(s)} ,  h_{I_s} \rangle h_{I_s}  \right\|_{L^2(w)}^2
\\ &\lesssim  [w]_{A_{\infty,\mathcal D}} \left\| \left(  \sum_{ s \in \mathbb{S}^{\boldsymbol{\omega}^d }} |\langle f W_{n(s)}, h_{I_s} \rangle|^2 \frac{ \ind_{I_s}}{\abs{I_s}} \right)^{\frac12} \right\|_{L^2(w)}^2    
  = [w]_{A_{\infty,\mathcal D}} \left\| \left( \sum_{ s \in  \mathbb{S}^{\boldsymbol{\omega}^d }} |\langle f , \varphi_{s} \rangle|^2 \frac{ \ind_{I_s}}{\abs{I_s}}\right)^{\frac12} \right\|_{L^2(w)}^2 
\end{split}
\]
where we have used the Chang--Wilson--Wolff inequality in the form of \cite{WilsonBook}*{Theorem 3.4} to pass to the second line, and identity \eqref{eq:treeident} again  for the last equality. Note that the right hand side of the identity above is easily seen to be bounded by $[w]_{A_{\infty,\mathcal D}} \|W^{\Omega^\star} f\|_{L^2(w)}^2$ and the proof is complete.
\end{proof}
Because of Lemma~\ref{lem:walshcww},  Theorem~\ref{TheoremWalshRdF} is reduced to  the following proposition. Here we can drop the restriction $I_s\subset \mathbb T$ in the tiles $s$ in the definition of $W^\Omega$ and just work with tiles with $I_s\subset \R$, which we implicitly assume below.
\begin{proposition}\label{prop:walshwp} Let  $\Omega$ be  a collection of pairwise disjoint intervals with endpoints in $\mathbb N$. Then{
\[
\Norm{ W^\Omega f }{ L^p(w) } \lesssim [w]_{A_{\frac p2,\mathcal D}}^{1/2} \Norm{ f }{ L^p(w) } , \qquad 2\leq p <\infty
\]
with implicit constants depending only on $p$. These bounds are sharp in terms of the power of the appearing weight characteristic.}
\end{proposition}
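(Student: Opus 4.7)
The plan proceeds via Walsh sparse domination, a sharp endpoint $L^2(w)$ bound at the natural class $A_{1,\mathcal D}$, and sharp limited-range Rubio de Francia extrapolation.

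First, the Walsh analog of Theorem \ref{t:sparsesmth} gives the pointwise inequality $W^\Omega f \lesssim G_{2,\mathcal S} f$ for some sparse $\mathcal S$. The argument follows the Fourier case verbatim, and is in fact simpler: the Walsh analog of Lemma \ref{l:orthol} is immediate from the genuine orthonormality of $\{\varphi_s:s\in\mathbb S^\Omega\}$ (which follows from Walsh orthogonality and the pairwise disjointness of $\Omega$), so the Carleson subordination of the sequence $\{[f]^2_{\mathbb S^\Omega(I)}:I\in\mathcal D\}$ to $|f|^2$ is immediate, and Proposition \ref{p:balayage} produces $\mathcal S$. Combined with Proposition \ref{p:weight}(v), this takes care of the range $p \geq 4$.

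Next, the plan is to prove the endpoint estimate $\|W^\Omega f\|_{L^2(w)}\lesssim [w]^{1/2}_{A_{1,\mathcal D}}\|f\|_{L^2(w)}$ for $w \in A_{1,\mathcal D}$. The pointwise identity $|H_\omega f|^2 = \sum_{s:\omega_s=\omega}|\langle f,\varphi_s\rangle|^2 \ind_{I_s}/|I_s|$, valid because tiles with the same $\omega_s$ have disjoint spatial supports, yields
\[
\int |W^\Omega f|^2 w = \sum_{s \in \mathbb S^\Omega} |\langle f,\varphi_s\rangle|^2 \langle w\rangle_{I_s}.
\]
The orthonormal system $\{\varphi_s\}$ consists of Walsh Haar-like atoms — each $\varphi_s$ with $n(s)\geq 1$ is mean-zero on its dyadic spatial interval $I_s$ — so the sharp Hukovic--Treil--Volberg / Wittwer $A_{2,\mathcal D}$ bound for dyadic square functions of Haar-like systems yields the control $\lesssim [w]^{1/2}_{A_{2,\mathcal D}}\|f\|_{L^2(w)}$. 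Since $[w]_{A_{2,\mathcal D}}\leq [w]_{A_{1,\mathcal D}}$ for $w \in A_{1,\mathcal D}$, the endpoint follows.

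The final step is sharp limited-range Rubio de Francia extrapolation of Auscher--Martell / Duoandikoetxea--Martell--Pérez type. Starting from the endpoint $L^2(A_{1,\mathcal D})$ bound, the limited-range mechanism (with base index $r_0=2$, reflecting the quadratic nature of the square function and placing the natural class at $A_{p/2}$ rather than $A_p$) lifts to
\[
\|W^\Omega f\|_{L^p(w)} \lesssim [w]^{1/2}_{A_{p/2,\mathcal D}}\|f\|_{L^p(w)}, \qquad 2\leq p < \infty, \ w \in A_{p/2,\mathcal D},
\]
preserving the sharp $1/2$ exponent. Sharpness of this exponent follows from the Luque--Pérez--Rela principle applied to the unweighted bound $\|W^\Omega\|_{L^p\to L^p} \lesssim \sqrt p$, itself a corollary of the sparse domination in the first step together with the classical bound $\|T_{1,\mathcal S}\|_{L^q\to L^q} \lesssim q$ for sparse operators.

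The main obstacle is the rigorous verification of the endpoint $L^2(w)$ bound. While morally $\{\varphi_s\}$ is a dyadic Haar-like system and sharp $A_2$-type bounds ought to apply, the wave packets are Walsh-character modulations rather than literal Haar functions. A clean argument either invokes a universal sharp $A_2$ result for orthonormal Haar-like frame systems, or proceeds by expanding each $H_\omega f = \psi_\omega \cdot \mathbb E[\overline{\psi_\omega}f\mid \mathcal F_{k(\omega)}]$ for dyadic $\omega$ and applying weighted martingale inequalities to each conditional expectation, then summing against the $A_{1,\mathcal D}$-condition to produce the claimed weight characteristic.
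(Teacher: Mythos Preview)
Your proposal has a genuine gap at the crucial step, namely the endpoint bound $\|W^\Omega f\|_{L^2(w)}\lesssim [w]_{A_{1,\mathcal D}}^{1/2}\|f\|_{L^2(w)}$. You yourself flag this as ``the main obstacle'' and do not resolve it. The Hukovic--Treil--Volberg / Wittwer sharp $A_2$ bound concerns the Haar square function, with \emph{one} atom per dyadic interval forming a martingale difference sequence. The Walsh wave packet system $\{\varphi_s:s\in\mathbb S^\Omega\}$ has, in general, many orthonormal wave packets sharing the same spatial interval $I_s$ (one for each $\omega\in\Omega$ of the matching scale), and is not a martingale difference system; there is no off-the-shelf ``sharp $A_2$ for Haar-like frame systems'' result to invoke. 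Your alternative route via $H_\omega f=\psi_\omega\cdot\mathbb E[\overline{\psi_\omega}f\mid\mathcal F_{k(\omega)}]$ is correct as an identity for dyadic $\omega$, but summing the resulting weighted bounds over $\omega\in\Omega$ does not obviously close with the right constant. Since the sparse domination plus Proposition~\ref{p:weight}(v) only delivers the sharp exponent for $p\geq 4$, and your extrapolation hinges entirely on this unproved endpoint, the proposal as written does not establish the proposition.

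The paper's argument is in fact far more elementary than anything you attempt, and it is worth internalizing. One writes
\[
\|W^\Omega f\|_{L^2(w)}^2=\sum_{s\in\mathbb S^\Omega}|\langle f,\varphi_s\rangle|^2\langle w\rangle_{I_s}
=\int_0^\infty \sum_{\substack{s\in\mathbb S^\Omega\\ \langle w\rangle_{I_s}>\lambda}}|\langle f,\varphi_s\rangle|^2\,\mathrm d\lambda,
\]
and observes that for fixed $\lambda$ the relevant $I_s$ lie inside $\{\mathrm M_{\mathcal D}w>\lambda\}$. The decisive Walsh-specific fact is \emph{perfect} local orthogonality: since $\supp\varphi_s=I_s$ exactly, the collection $\{\varphi_s:I_s\subseteq I^\star\}$ is orthonormal in $L^2(I^\star)$, so Bessel gives $\sum_{I_s\subseteq I^\star}|\langle f,\varphi_s\rangle|^2\leq\int_{I^\star}|f|^2$ with no tails. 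Summing over the maximal intervals $I^\star$ and integrating in $\lambda$ yields the Fefferman--Stein type inequality
\[
\int|W^\Omega f|^2 w\leq \int|f|^2\,\mathrm M_{\mathcal D}w,
\]
valid for \emph{any} nonnegative locally integrable $w$; the $A_{1,\mathcal D}$ endpoint is then immediate from the definition, and extrapolation (e.g.\ \cite{Duo}) gives the full range with the sharp $\frac12$ exponent. No weighted square function theory, no HTV/Wittwer, no sparse domination is needed. Your sharpness argument via \cite{LPR} and the $\sqrt p$ unweighted growth is essentially the same as the paper's.
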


\begin{proof} We begin with the case $p=2$ and we prove that for any non-negative locally integrable function $w$, the following stronger estimate holds
\[
\int |W^\Omega f |^2 w \lesssim \int |f|^2  \M_\DD w ;
\]
this clearly implies the conclusion for $p=2$. Here we recall that $\M_\DD$ stands for the dyadic maximal operator. To that end, we make the qualitative assumption $w\in L^1$ which will be removed momentarily and use a layer-cake decomposition to prove an $ L^2(w) $--bound for the operator $ W^{\Omega} $. More precisely, for $ w \in A_1$ and $ f \in L^2(\mathbb T) $ say, write
\[
\begin{split}
\Norm{ W^{\Omega}  f }{ L^2(w) }^2 & = \int_{\mathbb T} \sum_{ s \in \mathbb{S}^\Omega  } \abs{ \langle f,\varphi_s \rangle }^2 \frac{ \ind_{ I_s }(x) }{ \abs{ I_s } } w(x)  \d x 
\\
& = \sum_{ s \in \mathbb{S}^\Omega } \abs{ \langle f,\varphi_s \rangle }^2 \int_{0}^{\infty} \ind_{ \left\{\frac{ w(I_s) }{ \abs{ I_s } } > \lambda\right\} }  \d \lambda = \int_{0}^{\infty} \sum_{ \substack{s \in \mathbb{S}^\Omega \\ \frac{ w(I_s) }{ \abs{ I_s } } > \lambda} } \abs{ \langle f,\varphi_s \rangle }^2  \d \lambda.
\end{split}
\]
Now, let
\[
\RR_{\lambda} \coloneqq \left\{ I_s :\, s \in \mathbb{S}^\Omega,\,\, \frac{ w(I_s) }{ \abs{ I_s } } > \lambda \right\},
\]
and denote by $ \RR_{\lambda}^{\star} $ the collection of maximal elements of $ \RR_{\lambda} $. Then,
\begin{equation}\label{EqWalshDecompMaximal}
\Norm{ W^{\Omega}   f }{ L^2(w) }^2  =   \int_{0}^{\infty} \sum_{ I^{\star} \in \RR_{\lambda }^{\star} } \sum_{ s\in \mathbb S_\subseteq^\Omega(I^\star) } \abs{ \langle f,\varphi_s \rangle }^2  \d \lambda.
\end{equation}
The rightmost term in the display above can be estimated by using the local orthogonality of the Walsh wave packets in the form
\[
\sum_{ \substack{s\in\mathbb S^\Omega_ \subseteq (I) } } \abs{ \langle f,\varphi_s \rangle }^2 \leq \int_{I} \abs{ f(x) }^2  \d x\qquad \forall I\in\mathcal D.
\]
Applying this in \eqref{EqWalshDecompMaximal} we get
\[
\Norm{ W^{\Omega}   f }{ L^2(w) }^2 \leq \int_{0}^{\infty} \sum_{ I^{\star} \in \RR_{\lambda }^{\star}} \int_{ I^{\star} } \Abs{ f(x) }^2  \d x  \d \lambda = \int_{0}^{\infty} \int_{ \bigcup_{ I^{\star} \in \RR_{\lambda }^{\star} } I^{\star} } \Abs{ f(x) }^2  \d x  \d \lambda.
\]
Recall that $\M_{\DD}$ stands for the dyadic maximal function, and observe that 
\[
\bigcup_{ I^{\star} \in \RR_{\lambda }^{\star} } I^{\star} \subset\{x:\, \M_{\DD} w(x) > \lambda\}.
\]
Thus,
\[
\Norm{ W^{\Omega} f }{ L^2(w) }^2  \leq \int_{0}^{\infty} \int_{ \M_{\DD} w(x) > \lambda } \Abs{ f(x) }^2  \d x  \d \lambda= \int_{\mathbb T} \Abs{ f(x) }^2 \M_{\DD} w(x)  \d x 
\]
which is the estimate we want to prove for $p=2$. We can now drop the assumption $w\in L^1$, for example by a monotone convergence argument, yielding the same inequality for arbitrary locally integrable non-negative functions $w$. Finally the estimate above and the definition of dyadic $A_1$ weights readily yields
\[
\Norm{ W^{\Omega} f }{ L^2(w) }^2  \leq [w]_{A_{1,\mathcal D}}\Norm{  f }{ L^2(w) }^2
\]
which is the conclusion of the proposition for $p=2$.

For $p>2$, the $L^p(w)$-estimates of the proposition follow easily by extrapolation, using for example \cite{Duo}*{Corollary 4.2}. Finally note that these bounds are sharp,  in terms of the exponents of the $A_{p,\mathcal D}$-weight. Indeed, as in \cite{LPR} any of the bounds in the conclusion of the theorem implies that the unweighted $L^p $-norms of the martingale square function grows like $p^{1/2}$, which is best-possible: any better exponent on $[w]_{A_p,\mathcal D}$  would imply a stronger, and false, $p$-growth of the martingale square function as $p\to \infty$.
\end{proof}

\subsection{Optimality in Theorem~\ref{TheoremWalshRdF}} The only thing remaining to show in order to complete the proof of Theorem~\ref{TheoremWalshRdF} is the optimality of the exponent $1$ appearing in the exponent of the weight-constant in the estimate
\[
\|T^\Omega f\|_{L^p(w)}\lesssim [w]_{A_{p/2,\mathcal D}} \|f\|_{L^p(w)},\qquad 2\leq p <\infty.
\]
For this we can just choose $\Omega$ consisting of a single interval of the form $[0,b+1)$ with $b\in\mathbb N$. We claim that an estimate of the form
\[
\sup_{b\in \mathbb N} \left\| \sum_{n\in[0,b]} \langle f,W_n\rangle W_n\right\|_{L^p(w)}\leq C_p \|f\|_{L^p(w)}
\]
implies that martingale transforms are bounded on $L^p$ with constant at most $C_p$. This observation together with the considerations in \cite{LPR} will imply again the claimed sharpness.
In order to  verify the observation above, we quote from \cite[Eq.\ (5.7)]{DPL2014}
 the equality\[
\left|\sum_{n=0} ^{b} \langle f,W_n\rangle W_n \right| =  \left|\sum_{I\in\mathcal D}\varepsilon_{I,b}\langle f W_b,h_I\rangle h_I  \right|
\]
where  $\varepsilon_{I,b}\in\{0,1\}$ is a sequence depending on $b$ only.
The right hand side is a Haar martingale transform of $f W_b$. This shows that we can recover any Haar martingale transform of $f$ by suitable choice of $b$, which is the promised claim and completes the proof of the optimality of the exponents in Theorem~\ref{TheoremWalshRdF}.

\section{Proof of Theorem \ref{thm:radial}}\label{SSufficient}

The proof of Theorem \ref{thm:radial}  begins with relating $T^\Omega$ to the  intrinsic wave packet square function $W^\Omega$ defined in  \eqref{eq:intrinsic} via a version of the Chang--Wilson--Wolff inequality.
\begin{lemma}\label{lem:CWW} For all $w\in A_\infty$ there holds
\[
\|T^\Omega\|_{L^2(w)}\lesssim [w]_{A_\infty} ^{\frac 12} \| G^\Omega \|_{L^2(w)} \lesssim  [w]_{A_\infty} ^{\frac 12} \| W^\Omega \|_{L^2(w)}.
\]
\end{lemma}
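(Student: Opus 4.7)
The plan is to handle the two inequalities separately. The second one, $\|G^\Omega\|_{L^2(w)}\lesssim \|W^\Omega\|_{L^2(w)}$, will come essentially for free from the pointwise bound \eqref{eq:intrinstowp} already established in Section~\ref{sub:wave}: after dominating its supremum by a sum of nine wave packet square functions $W^{\Omega^{k,\star}}$ (each built on a pairwise disjoint subcollection of a shifted dyadic grid) and taking $L^2(w)$-norms, the nine pieces combine into $\|W^\Omega f\|_{L^2(w)}$ up to an absolute constant; passing to the supremum over unit-norm $f$ then converts the pointwise inequality into the required operator norm estimate.

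For the first inequality $\|T^\Omega\|_{L^2(w)}\lesssim [w]_{A_\infty}^{1/2}\|G^\Omega\|_{L^2(w)}$, the strategy is to run the continuous analogue of the Walsh argument of Lemma~\ref{lem:walshcww}, with the discretization \eqref{e:smoothtf} replacing the role played there by the tree identity \eqref{eq:treeident}. First I would use \eqref{e:smoothtf} to reduce, up to a finite sum of $285$ terms, to controlling a time-frequency projection square function $\bigl(\sum_\omega |P_{\R,\omega}^\Phi f|^2\bigr)^{1/2}$ in $L^2(w)$. Next I would invoke the Chang--Wilson--Wolff $A_\infty$ inequality of \cite{WilsonBook} termwise in $\omega$ to each $P_{\R,\omega}^\Phi f$, then sum in $\ell^2(\Omega)$, which yields
\[
\Bigl\|\bigl(\textstyle\sum_\omega |P_{\R,\omega}^\Phi f|^2\bigr)^{1/2}\Bigr\|_{L^2(w)}^2\lesssim [w]_{A_\infty}\Bigl\|\bigl(\textstyle\sum_\omega |\mathcal S(P_{\R,\omega}^\Phi f)|^2\bigr)^{1/2}\Bigr\|_{L^2(w)}^2
\]
for a suitable continuous intrinsic square function $\mathcal S$. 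The closing step is the pointwise comparison $\bigl(\sum_\omega|\mathcal S(P_{\R,\omega}^\Phi f)|^2\bigr)^{1/2}\lesssim G^\Omega f$, which rests on the frequency localization of $P_{\R,\omega}^\Phi f$ inside $\omega$ together with the definition of $f_\omega$ as a supremum over the class $\Phi_\omega$.

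The hard part will be this last pointwise identification: for each fixed $\omega$, one must match the continuous Littlewood--Paley scales of $\mathcal S$ with the dyadic scales of the wave packets $\varphi_s$ and verify that the effective convolution kernels arising fall uniformly into (a fixed multiple of) the Fourier transform of the class $\Phi_\omega$, with the irrelevant scales $t$ producing no contribution because they lie outside $\omega$. Once this scale matching is justified, the rest is essentially bookkeeping, and the passage from the pointwise-in-$f$ estimate to operator norms completes the proof.
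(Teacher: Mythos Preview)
Your treatment of the second inequality matches the paper's: both reduce to the pointwise bound \eqref{eq:intrinstowp}.

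For the first inequality, the paper's proof is a one-line citation of the Chang--Wilson--Wolff inequality in the form of \cite{LerMRL2019}*{Theorem 2.7}, with no detour through \eqref{e:smoothtf}. Your proposed route has a genuine gap at the closing pointwise comparison. You want each Littlewood--Paley piece of $P_{\R,\omega}^\Phi f$ to arise as $f*\widehat\phi$ with $\phi\in C\Phi_\omega$, so that the $\ell^2$-sum over scales is dominated by the single quantity $f_\omega$. But the class $\Phi_\omega$ only admits symbols smooth at the coarse scale $\ell_\omega$: a bump $\phi$ adapted to a Whitney subinterval $\alpha\subsetneq\omega$ satisfies $\ell_\omega^j\|\phi^{(j)}\|_\infty\sim(\ell_\omega/\ell_\alpha)^j$, which is not uniformly bounded. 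Hence the pieces produced by $\mathcal S$, whether $\mathcal S$ is the standard origin-centered square function or an $\omega$-adapted one, do \emph{not} land in $\Phi_\omega$; and since an interval $\omega$ can meet arbitrarily many Littlewood--Paley annuli (or carry arbitrarily many Whitney scales), there are in general infinitely many nonzero pieces per $\omega$ with no uniform pointwise control by $f_\omega$. Note also that $P_{\R,\omega}^\Phi$, as defined in \eqref{e:tfp}, is a sum of rank-one projections $\langle\,\cdot\,,\varphi_s\rangle\vartheta_s$ and not a Fourier multiplier, so even speaking of an ``effective convolution kernel'' requires justification you have not supplied.

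A salvageable version of your idea is to apply an $\omega$-adapted Chang--Wilson--Wolff inequality termwise and bound the output by $G^{\Omega'}f$ for the Whitney refinement $\Omega'=\bigcup_{\omega\in\Omega}\boldsymbol\omega$ of $\Omega$. That suffices for the downstream application in Section~\ref{SSufficient}, where the control on $G^\Omega$ is uniform over all pairwise disjoint collections, but it is a different (weaker) statement than the lemma as written.
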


\begin{proof} The second inequality in the conclusion of the lemma is an application of \eqref{eq:intrinstowp}. The first inequality follows from an application of the Chang--Wilson--Wolff inequality \cite{CWW}, for example in the form elaborated by Lerner in \cite{LerMRL2019}*{Theorem 2.7}.
\end{proof}

The next step is to establish a sufficient condition for $L^2(w)$-boundedness of $W^\Omega$ based on the super-level sets of $\mathrm{M}w$, see \eqref{EqSufficientDesired} below. We will later show that \eqref{EqSufficientDesired} is satisfied by radially decreasing, even $A_1$ weights. Turning to the former task, let $w\in A_{1}$ and $\lambda>0$, and let $\RR_{\lambda}$ denote a collection of dyadic intervals such that $\frac{w(R)}{|R|}>\lambda$. Note that
\[
 \bigcup_{R \in \RR_{\lambda}} R \subseteq\{\M_{\DD} w>\lambda\}
\]
where $\M_{\mathcal D}$ denotes the dyadic maximal function. Arguing as in \S \ref{SWalsh} we can reduce the $ L^2(w) $ boundedness of the intrinsic wave packet square function $W^{\Omega}$ defined in  \eqref{eq:intrinsic} to	the estimate
\begin{equation}\label{EqSufficientDesired}
\int_{0}^{\infty} \sum_{\substack{t \in \mathbb S^{\Omega} \\ R_t \in \RR_{\lambda} } }|R_t|\left\langle f, \varphi_{t}\right\rangle^{2} \lesssim_w  \int|f|^{2} w,
\end{equation}
for an arbitrary choice of $\varphi_t\in \Psi_t$ for each $t\in \mathbb S^{\Omega}$. This is because
\[
\begin{split}
\Norm{ W^{\Omega}f }{ L^2(w) }^2 
&  \lesssim  \int_{\R} \sum_{ t \in \mathbb S^{\Omega}}|R_t| \abs{ \langle f,\varphi_t \rangle }^2 \frac{ \ind_{ R_t }(x) }{ \abs{ R_t } } w(x)  \d x 
\\
& = \sum_{ t \in \mathbb S^{\Omega} }|R_t| \abs{ \langle f,\varphi_t \rangle }^2 \int_{0}^{\infty} \ind_{ \left\{\frac{ w(R_t) }{ \abs{R_t} } > \lambda\right\} }  \d \lambda = \int_{0}^{\infty} \sum_{ \substack{t \in \mathbb S^{\Omega} \\R_t \in \RR_{\lambda} } }|R_t| \abs{ \langle f,\varphi_t \rangle }^2  \d \lambda.
\end{split}
\]
Thus, our goal is to prove \eqref{EqSufficientDesired}. We will use the following definition.
	
\begin{definition}Let $ \RR,\RR^\ast $ be collections of intervals. We say that  $\RR $ is \emph{subordinate} to $ \RR ^\ast $ if for every $ R \in \RR $ there exists $ R^{\ast} \in \RR^{\ast} $ such that $ R \subseteq R^{\ast} $.
\end{definition}

The canonical example of a collection $\mathcal R ^\ast$ to which $\mathcal R$ is subordinate is the collection of  its maximal elements. However, other choices are possible. Now we assume that for each $\lambda>0$ the collection $ \RR_{\lambda}$ is subordinate to $ \RR_{\lambda} ^{\ast} $. Then, in view of Lemma~\ref{l:orthol} we have the chain of inequalities
\[
\begin{split}
\int_{0}^{\infty} \sum_{R_t \in \RR_{\lambda}}|R_t|\left|\left\langle f, \varphi_{t}\right\rangle\right|^{2}&=\int_0 ^\infty \sum_{R^\ast\in\mathcal R_\lambda ^\ast}\sum_{\substack{R_t\in\mathcal R_\lambda\\R_t\subseteq R^\ast}} |R_t| \left|\left\langle f, \varphi_{t}\right\rangle\right|^{2}
\lesssim \int_{\R}|f|^{2} \int_{0}^{\infty}  \sum_{R^{\ast} \in \RR_{\lambda}^{\ast}} \chi_{R^{\ast}}^{9}.
\end{split}
\]
Thus, a sufficient condition for the desired $ L^2(w) $ boundedness \eqref{EqSufficientDesired} is that for a.e. $x\in\R$ there holds
\begin{equation}	\label{EqSufficient}
		\int_{0}^{\infty} \sum_{R^{\ast} \in \mathcal{R}_{\lambda}^{\ast}} \chi_{R^{\ast}}^{9}(x) \,\d \lambda \lesssim w(x)
\end{equation} 
where $\RR_{\lambda} ^\ast$  is such that $ \RR_{\lambda}$ is subordinate to $ \RR_{\lambda} ^\ast$ for every $\lambda>0$. By considering a single interval $R$ and taking $\lambda<w(R)/|R|$ we readily see that \eqref{EqSufficient} implies the $A_1$ condition.

\subsection{$L^2(w)$ boundedness for  even and radially decreasing $ A_1$ weights} We can show the sufficient condition \eqref{EqSufficient} for even and radially decreasing weights $ w \in A_1 $, i.e. $w(x)=w_0(|x|)$ for some $w_0:[0,\infty)\to [0,\infty)$ and $w_0$ decreasing. The proof proceeds by verifying the sufficient condition \eqref{EqSufficient}. In doing so we also provide the promised generalization of Theorem~\ref{thm:radial} of the previously known results for $w(x)\coloneqq |x|^{-\alpha}\in A_1$ to even radially decreasing $A_1$ weights on the real line.
	
Let $ R = [a,b] $ be an interval belonging to  $\RR_{\lambda}$, which we recall it is the collection of intervals such that $ \frac{w(R)}{|R|}>\lambda $. Without loss of generality, assume that $|a| < |b| $ so that $ R \subseteq [-|b|,|b|] $. Since the weight $w$ is even and decreasing we have that
\[
\begin{aligned}
\lambda<\frac{w(R)}{|R|} \leq[w]_{A_1} \inf_{x\in R} w(x) =[w]_{A_1} w_0(|b|) \qquad & \Longleftrightarrow \qquad w_0(\abs{b})>\frac{\lambda}{[w]_{A_1}} .
\end{aligned}
\]
Since $w$ is decreasing the last inequality implies the existence of some $b_\lambda=b_\lambda([w]_{A_1},w)>0$ with $w_0(b_\lambda)>\lambda/[w]_{A_1}$ such that $|b|\leq b_\lambda$.
That is, denoting $R_\lambda \coloneqq \left[ -  b_\lambda, b_\lambda\right]$ we have that $ R \subseteq R_\lambda $ and all intervals $R\in\mathcal R_\lambda$ are subordinate to the collection $\{R_\lambda\}$ for every $\lambda>0$. However,
\[
\begin{aligned}
\int_{0}^{\infty} \chi_{R_\lambda} ^9 \d \lambda \lesssim \sum_{\tau \geq 0}  2^{-18 \tau} \int_{0}^{\infty} \ind_{ \left\{\abs{x} \leq 2^{\tau} b_\lambda  \right\} }(x) \d \lambda  , 
\end{aligned}
\]
where we used the decay of $\chi_{R_\lambda} ^9$.  Observe that
\[
	|x| \leq 2^{\tau} b_\lambda \Longleftrightarrow w_0\left(\frac{|x|}{2^{\tau}}\right) \geq w_0(b_\lambda)>\frac{\lambda}{[w]_{A_1}}.
\]
	Thus,
\[
\int_{0}^{\infty} \chi_{R_\lambda} ^9 \d \lambda 	\lesssim  \sum_{\tau \geq 0} 2^{-18 \tau} \int_0^{[w]_{A_1} w_0\left(\frac{|x|}{2^{\tau}}\right) }  \d \lambda  =  \sum_{\tau \geq 0} 2^{-18\tau}[w]_{A_1} w_0\left(\frac{|x|}{2^{\tau}}\right) .
\]
Finally, note that $ \frac{|x|}{2^{\tau}} \leq|x|$, so that	
\[
\begin{aligned}
w_0\left(\frac{|x|}{2^{\tau}}\right) &=\inf_{\left(0, \frac{|x|}{2 \tau}\right)} w \leq \frac{w\left([0,|x| / 2^{\tau}]\right)}{|x| / 2^{\tau}} \leq \frac{w([0,|x|])}{|x|} \cdot 2^{\tau} 
\\
& \leq [w]_{A_1} 2^{\tau} \inf _{(0,|x|)} w =2^{\tau}[w]_{A_1} w_0(|x|).
\end{aligned}
\]
Using this in the previous inequality yields
\[
\int_{0}^{\infty} \chi_{R_\lambda} ^9 \d \lambda  \lesssim[w]_{A_1} \sum_{\tau \geq 0} 2^{-18 \tau} 2^\tau [w]_{A_1} w_0(|x|) \lesssim [w]_{A_1} ^2 w(x).
\]
This shows that even and radially decreasing $A_1$ weights satisfy the sufficient condition \eqref{EqSufficient} and thus, combined with Lemma~\ref{lem:CWW} proves Theorem~\ref{thm:radial}.

 \bibliography{SparseRDF}{}
\bibliographystyle{amsplain}
\end{document}